\documentclass[11pt]{amsart}
\usepackage[margin=3cm]{geometry}

\usepackage{latexsym}
\usepackage{amssymb}
\usepackage{amsmath}
\usepackage{color}
\usepackage{bbm}
\usepackage[normalem]{ulem}
\usepackage{hyperref}

\usepackage{graphicx}
\usepackage{tcolorbox}
\usepackage{tabularx}
\usepackage[all]{xy}

\usepackage{tikz-cd}
\usepackage{xparse}

\tikzset{
  symbol/.style={
    draw=none,
    every to/.append style={
      edge node={node [sloped, allow upside down, auto=false]{$#1$}}}
  }
}

\NewDocumentCommand\DownArrow{O{2.0ex} O{black}}{%
   \mathrel{\tikz[baseline] \draw [<-, line width=0.5pt, #2] (0,0) -- ++(0,#1);}
}
\NewDocumentCommand\UpArrow{O{2.0ex} O{black}}{%
   \mathrel{\tikz[baseline] \draw [->, line width=0.5pt, #2] (0,0) -- ++(0,#1);}
}

\newtheorem{theorem}{Theorem}[section]
\newtheorem{lemma}[theorem]{Lemma}
\newtheorem{proposition}[theorem]{Proposition}
\newtheorem{corollary}[theorem]{Corollary}
\newtheorem{definition}[theorem]{Definition}

\newtheorem{example}[theorem]{Example}
\newtheorem{remark}[theorem]{Remark}

\newcommand{\cl}[1]{\mathcal{#1}}
\newcommand{\bb}[1]{\mathbb{#1}}

\begin{document}

\title[Affiliated operators and control theory]{Affiliated operators for classical and quantum control}

\author[D. Giannakis]{Dimitrios Giannakis}
\address{Department of Mathematics\\ Dartmouth College\\ Hanover \\ NH 03755 \\ USA}
\email{dimitrios.giannakis@dartmouth.edu}

\author[G. Hoefer]{Gage Hoefer}
\address{Department of Mathematics \\ Dartmouth College \\ Hanover \\ NH 03755 \\ USA}\email{gage.hoefer@dartmouth.edu}

\thanks{2020 {\it  Mathematics Subject Classification.} 37C30, 46L51, 46L60, 81Q93, 93B28}

\thanks{{\it Key words and phrases:} Bilinear control systems, Koopman operator, affiliated operators, finite von Neumann algebras, reproducing kernel Hilbert spaces}

\date{\today}

\begin{abstract}
Using techniques from the theory of von Neumann algebras, we propose a framework for addressing questions of controllability of bilinear systems on infinite dimensional Hilbert spaces. In the setup, we assume only that the drift and control terms arising in a bilinear control system are affiliated with a von Neumann algebra of finite type acting on the same Hilbert space. When the control terms satisfy basic norm bound conditions, we prove existence of time-optimal controls. In the more general setting where all operators may be unbounded, we show how the dynamical Lie algebra for the system is still well-defined and may be used to check approximate controllability of the system in question. We discuss how this approach can be applied to classical dynamical systems through the Koopman operator formalism, and investigate potential candidates for the von Neumann algebra which may guide the choice of controls. We illustrate how an affiliation relation naturally arises in both classical and quantum control systems with a few examples. 
\end{abstract}

\maketitle

\section{Introduction}\label{sec:intro}

\subsection{von Neumann algebras and quantum control systems}\label{ss:intro_qcs}
Control theory, which is the study of finding sufficient conditions and devising methods to influence the dynamical behavior of macro and micro-systems in an optimal way, has a long and rich history with applications in diverse areas as chemistry, plasma physics, solid-state technology, information theory, and engineering. A typical closed quantum control system usually extends the Schrödinger equation (oftentimes governed solely by the so-called ``drift" term) by time-dependent control Hamiltonians, all of which act as linear operators on a common Hilbert space $H$. When the controlled part is linear in both the state and the control, the equation of motion describing the modified dynamics fits into the class of bilinear control systems. Explicitly, if $\xi = \xi(t) \in H$ is the time-dependent vector state with initial state $\xi_{0} \in H$, then the equation of motion for the system may be written as
\begin{gather*}
    i\hbar\frac{d}{dt}\xi(t) = (V_{0}+V(t))\xi(t), \;\;\;\; \xi(0) = \xi_{0}.
\end{gather*}
Oftentimes, we may further decompose the time-dependent control $V(t)$ such that the total Hamiltonian guiding the dynamics is of the form $V_{0}+\sum_{j=1}^{N}u_{j}(t)V_{j}$, where $u_{j}: \mathbb{R}\rightarrow \mathbb{R}$ are real-valued functions and each $V_{j}$ acts as a self-adjoint linear operator on $H$, for $j=0, \hdots, N$. Such systems are natural to consider, and have been extensively studied (see, for instance \cite{bbr, bcc, kzs, rbmb, wtl, yl} and the references therein for a non-exhaustive list). 

The broad goal for a quantum control system is to use the controls to manipulate the system over time, steering a given initial (vector) state of the system $\xi_{0} \in H$ to a target state $\xi_{f} \in H$. A controllable system is one in which any given initial state can be guided to an arbitrary state over some finite time interval. If the system is not fully controllable, a natural question is to determine which states in $H$ are accessible under the dynamics from an initial state. The reachable set--- the set of states which the initial state can be guided to--- becomes the primary object of interest when determining controllability of the system. 

When the Hilbert space $H$ is finite-dimensional (say, of dimension $n$) there are well-developed methods for determining the overall controllability of the system and finding time-optimal controls. A well-established criterion for when the system is fully controllable relies on the dynamical Lie algebra of the system; this is the Lie algebra generated using the Hamiltonians $iV_{0}, \hdots, iV_{N}$ considered as $n\times n$ skew-adjoint matrices inside $\mathfrak{u}(n)$. This Lie algebra may also be used to help understand which unitaries in $U(n)$ are available for implementing quantum operations on $H$. 

This approach becomes much more difficult to follow (if not impossible, without placing further assumptions on the system) in the case when $H$ is infinite-dimensional. Many quantum mechanical systems are inherently infinite-dimensional, and the Hamiltonians which arise in these situations often act as densely-defined but unbounded linear operators on the Hilbert space. From the perspective of Lie theory, performing algebraic operations with unbounded operators is problematic: there is no guarantee that if $a, b$ are densely-defined and closed self-adjoint operators on $H$ that $a+b$ or $ab-ba$ are well-defined on $H$. Additionally, there are different topologies one can place on $U(H)$ (the infinite-dimensional analogue of the $n\times n$ unitary group $U(n)$) which make it a topological group: either the topology inherited from the norm in $\mathcal{B}(H)$, or the strong operator topology. An organizing principle in this work is to frame the governability of a generator for a one-parameter unitary semi-group (representing the dynamics of a system on some space) as a control theory problem. While $U(H)$ endowed with the norm topology is a well-defined Banach-Lie group, requiring norm-continuity for a one-parameter unitary semi-group inside $U(H)$ forces the generator of the semi-group to be a bounded operator on $H$. Requiring only strong operator topology continuity for a representation of a unitary group allows for the use of unbounded generators, which is why a more suitable candidate for our purposes would be to try and recover Lie group structure on $U(H)$ under the strong operator topology. 

As a first step in addressing these complications, and with the aim of recovering the dynamical Lie algebra of an infinite-dimensional quantum mechanical system in mind, we place an additional restriction on the Hamiltonians arising in our bilinear control system. As first shown in \cite{am_one}, if $a, b$ are closed and densely-defined linear operators on $H$ which are \textit{affiliated} with a von Neumann algebra $\mathcal{M}$ of \textit{finite type} acting on $H$, then $e^{ita} \in \mathcal{M}$ as a unitary for all $t \in \mathbb{R}$ and the unitary group of $\mathcal{M}$ endowed with the strong operator topology becomes a well-defined, infinite-dimensional (closed) Lie group. Furthermore, using the results of Murray-von Neumann \cite{mvn}, all domain issues disappear when forming algebraic combinations of $a, b$. Specifically, if $L^{0}(\mathcal{M})$ denotes the set of all closed and densely-defined operators on $H$ affiliated with $\mathcal{M}$, then $L^{0}(\mathcal{M})$ is a complete, metrizable topological $*$-algebra. 

If Hamiltonians $V_{0}, \hdots, V_{N}$ are affiliated with some finite von Neumann algebra $\mathcal{M}$, we show how the dynamical Lie algebra for the system is a well-defined real Lie algebra inside $L^{0}(\mathcal{M})$. As $\mathcal{M}$ is a (usually infinite-dimensional) dual Banach space, we are able to utilize techniques from the theory of infinite-dimensional control systems on Banach spaces (see \cite{bdpdm, hls}), along with the operator algebraic properties of $\mathcal{M}$, to establish fundamental results on the properties of the reachable sets for the system and how we can address questions of time-optimal control in this setting. With the recovery of the dynamical Lie algebra, and an approximate version of the well-known Lie Algebra Rank Condition (LARC), the methods developed in the study of abstract geometric control theory (see \cite{jur}) can be applied when analyzing the system. For other work approaching control problems with a von Neumann algebraic approach--- particularly in the context of free probability--- see \cite{gjnp_one, gjnp_two}.

\subsection{von Neumann algebras and Koopman theory}\label{ss:intro_koopman}
The von Neumann algebraic viewpoint also has natural connections to the control of classical dynamical systems, when using an operator-theoretic approach first developed by Koopman and von Neumann (see \cite{kvn}). A common problem which arises for a given measure-preserving, invertible transformation $\Phi: X\rightarrow X$ of a probability space $(X, \Sigma, \mu)$ is that the transformation $\Phi$ which governs the dynamics of the state space $X$ is often highly nonlinear. In order to utilize operator theoretic tools when analyzing behavior of classical dynamical systems, we instead lift to the space of observables $L^{2}(X, \mu)$ and study the induced linear action of $\Phi$ on $L^{2}(X, \mu)$ via the Koopman and transfer operators $Uf := f\circ \Phi$ and $Pf = f\circ \Phi^{-1}$, for $f \in L^{2}(X, \mu)$. These are both unitary operators on the Hilbert space of observables (with $U^{*} = P$). Additionally, by Stone's Theorem (for one-parameter, strongly continuous unitary groups \cite[Chapter 5]{kr_one}) the Koopman group $\{U^{t}\}_{t \in \mathbb{R}}$ of unitaries on $L^{2}(X, \mu)$ induced by a measure-preserving flow $\Phi^{t}: X\rightarrow X, t \in \mathbb{R}$ is completely determined by its generator: this is a densely-defined, closed and skew-adjoint operator $V: {\rm dom}(V)\rightarrow L^{2}(X, \mu)$ given via the strong limit 
\begin{gather*}
    Vf = \lim\limits_{t\rightarrow 0}\frac{U^{t}f-f}{t},
\end{gather*}   
\noindent for which $U^{t} = e^{tV}$ for every $t \in \mathbb{R}$ via the Borel functional calculus. In the continuous-time setting, $V$ (modulo an imaginary factor) may be considered as an analogue of a quantum mechanical Hamiltonian. Therefore, we may reframe the controllability of a classical dynamical system by investigating the corresponding bilinear control system on $L^{2}(X, \mu)$ (also known as control-affine systems), where generator $V$ acts as the time-independent drift term and the time-dependent controls are chosen as skew or self-adjoint operators on $L^{2}(X, \mu)$. 

Recent years have seen a surge of interest in numerical methods for approximating Koopman and transfer operators of dynamical systems, including applications of Koopman operator methods in control; see \cite{BruntonEtAl22, KlusEtAl20, MauroyEtAl20, or} and references therein.
A common theme among these methods is to cast control problems in nonlinear dynamical systems as bilinear control-affine problems formulated in terms of Koopman operators acting on observables \cite{Surana16,WilliamsEtAl16}, in line with what was mentioned just above.   
An early example is the dynamic mode decomposition (DMD) with control technique \cite{ProctorEtAl16}---this method extends the popular DMD method for Koopman operator approximation to build a bilinear model for the dynamics of observables with separate components representing the Koopman operator of the uncontrolled system and the linear control inputs.
This general approach was further developed in the setting of model predictive control \cite{KordaMezic18}, using DMD-based approximations of the Koopman operator to predict the evolution of the components of the state vector.
Other approaches employ spectral decompositions of the Koopman operator \cite{KaiserEtAl21,KordaMezic20} to identify spaces of controlled observables with a predictable time evolution, or expectation maximization techniques \cite{OttoEtAl24, PeitzEtAl20} to estimate control-input dependent families of Koopman operators.
A probabilistic error analysis for variants of these methods in the setting of stochastic dynamics was carried out in \cite{NuskeEtAl23}. 
Koopman operator methods for control have also been developed for partial differential equation systems \cite{ArbabiEtAl18,PeitzKlus19}.

Returning to the example of the measure-preserving flow $\Phi^t$, there is a natural candidate for the finite von Neumann algebra $\mathcal{M}$ on $L^{2}(X, \mu)$ we would expect our drift and controls to be affiliated to: by setting ${\rm VN}(\Phi)$ as the closure under the weak operator topology of the unital $*$-subalgebra in $\mathcal{B}(L^2(X, \mu))$ generated by the Koopman group $\{U^{t}\}_{t \in \mathbb{R}}$, we show that ${\rm VN}(\Phi)$ is not only finite type but abelian, with $V$ affiliated to ${\rm VN}(\Phi)$. We focus particularly on this von Neumann algebra, as it is more tractable to work with when one has access to the action of the generators $U^{t}$ on $H$. Furthermore, when paired with a data-driven approach to approximating the Koopman operators $U^{t}$ (as suggested in \cite{kpm}; see also \cite{gljmpss, gv_one} or the recent survey \cite{gm} and the references therein), this suggests a numerically feasible method for choosing appropriate controls for the system. 

\subsection{Contents} The structure of the manuscript is as follows. In Section \ref{sec:preliminaries}, we recall the necessary background on unbounded operators, von Neumann algebras, and Lie theory. In Section \ref{sec:adm_controls}, we propose a notion of admissible controls for infinite-dimensional quantum systems, allowing for the Hamiltonians of the system to be unbounded but affiliated to a finite tracial von Neumann algebra $\mathcal{M}$. We establish--- assuming certain conditions on the norm of the admissible controls--- properties of the reachable set for the system, and basic existence theorems for time optimal controls, analogous to known results in finite-dimensions. In Section \ref{sec:qcs_geom_control}, we move to the case when the control is dependent only on piecewise constant functions, and so the Hamiltonian decomposes as a finite linear combination of self-adjoint operators. Assuming no regularity or operator norm conditions on the individual Hamiltonians, we show how affiliation allows us to recover the ``dynamical Lie algebra" for the system. We discuss various notions of controllability for such a system, and show how equivalence between these notions no longer coincide as compared to the finite-dimensional setting. We also introduce an operator algebraic informed approximation scheme for unbounded Hamiltonians with complicated spectral behavior. In Section \ref{sec:classical_systems}, we discuss how the affiliated operator approach to control theory may be useful for classical systems; in particular, for a variety of dynamical systems when considering the corresponding generator $V$ of a one-parameter strongly continuous unitary group. We clarify the relation between the Koopman group, and the canonical von Neumann algebra $V$ is affiliated to. We also give a more concrete picture of our approximation scheme in this setting, and connect our results to the recently developed theory of reproducing kernel Hilbert algebras. Finally, in Section \ref{sec:examples} we discuss a few examples which naturally arise. 

While a natural topic to investigate in the control theory literature, we do not address any questions involving stability or feedback. These, and related questions will be reserved for future work.

\section{Preliminaries}\label{sec:preliminaries}
In this section, we collect some definitions and results (many which are well-known) which will be needed throughout. For an excellent introduction to the theory of operator algebras, the reader is directed to \cite{kr_one, takesaki, takesaki_two}. We assume basic familiarity with the theory of Lie groups and Lie algebras; see \cite{bourbaki} for greater detail. Additionally, for an exposition of RKHS theory, see \cite{paulrag}.
\subsection{Measure theory and unbounded operators} Let $H$ be a Hilbert space; all Hilbertian inner products will be linear in the first coordinate. The domain of a linear operator $a$ on $H$ is denoted ${\rm dom}(a)$, and the range is denoted ${\rm rng}(a)$. We say a linear operator $a: {\rm dom}(a) \rightarrow H$ is \textit{closable} if ${\rm dom}(a)$ is a dense subspace of $H$ and if the closure $\overline{\rm Gr(a)}$ of its graph
\begin{gather*}
    {\rm Gr}(a) = \{(\xi, a\xi): \; \xi \in {\rm dom}(a)\} \subseteq H\oplus H,
\end{gather*}
\noindent is the graph of a linear operator, and \textit{closed} if ${\rm Gr}(a)$ is already closed. If $a$ is closable, we write $\overline{a}$ for the closure of $a$. A \textit{core} (sometimes called an \textit{essential domain}) of a closable operator $a$ on $H$ is a subset $D \subseteq {\rm dom}(a)$ such that the closure of the restriction $a|_{D}$ is $\overline{a}$. We say a closed, densely-defined operator $a: {\rm dom}(a)\rightarrow H$ is \textit{positive} if $\langle a\xi, \xi\rangle \geq 0$ for all $\xi \in {\rm dom}(a)$. For (potentially unbounded) densely defined, closed operators $a, b$ on $H$ and $\alpha \in \bb{C}$ we set
\begin{gather*}
    {\rm dom}(\alpha a) := {\rm dom(a)},
    \\ (\alpha a)\xi := \alpha(a\xi), \;\;\;\; \xi \in {\rm dom}(\alpha a),
\end{gather*}
\begin{gather*}
    {\rm dom}(a+b) := {\rm dom}(a) \cap {\rm dom}(b),
    \\ (a+b)\xi := a\xi+b\xi, \;\;\;\; \xi \in {\rm dom}(a+b),
\end{gather*}
\begin{gather*}
    {\rm dom}(ab) := \{\xi \in {\rm dom}(b), b\xi \in {\rm dom}(a)\},
    \\ (ab)\xi := a(b\xi), \;\;\;\; \xi \in {\rm dom}(ab).
\end{gather*}
\noindent A densely defined, closed operator $a$ on $H$ is called \textit{resolvent class} if: 
\begin{itemize}
    \item[(i)] there exist self-adjoint operators $a', b'$ on $H$ such that ${\rm dom}(a') \cap {\rm dom}(b')$ is a core of $a'$ and $b'$;
    \item[(ii)] $a = \overline{a'+ib'}, a^{*} = \overline{a'-ib'}$.
\end{itemize}
\noindent In this case, $a', b'$ are uniquely determined and we may write
\begin{gather*}
    {\rm Re}(a) := a' = \frac{1}{2}\overline{a+a^{*}}, \;\;\;\; {\rm Im}(a) := b' = \frac{1}{2i}\overline{a-a^{*}}.
\end{gather*}
\noindent We endow the space of all resolvent class operators with the \textit{strong resolvent topology (SRT)}: we say a net $\{a_{\lambda}\}_{\lambda \in \Lambda}$ converges in the SRT to $a$ if and only if
\begin{gather*}
    ({\rm Re}(a_{\lambda})-i)^{-1}\xi\rightarrow ({\rm Re}(a)-i)^{-1}\xi, \;\;\;\; ({\rm Im}(a_{\lambda})-i)^{-1}\xi\rightarrow ({\rm Im}(a)-i)^{-1}\xi,
\end{gather*}
\noindent for every $\xi \in H$. 

In what follows, let $X$ be a locally compact, $\sigma$-compact Hausdorff space with Radon measure $\mu$ on $X$. We let $\cl{L}(X), \cl{B}(X), \cl{B}_{b}(X)$ and $\cl{N}(X)$ denote the spaces of measurable, Borel measurable, bounded Borel, and null functions on $X$, respectively. When $(X, \Sigma, \mu)$ is a $\sigma$-finite measure space, for $1 \leq p \leq \infty$ we let $L^{p}(X, \mu)$ denote the associated $L^{p}$-space. Given a Banach space $Y$ and $1 \leq p \leq \infty$, let $L^{p}(X, Y)$ denote the Bochner space of all measurable functions $F: X\rightarrow Y$ (modulo functions which are almost everywhere zero) such that the norm function $t \mapsto \|F(t)\|_{Y}$ belongs to $L^{p}(X, \mu)$. This is a Banach space under the norm $\|F\|_{p}$, with the latter taken as the $L^{p}(X, \mu)$ norm of $\|F(\cdot)\|_{Y}$. When $Y$ is separable, we say a function $f: X\rightarrow Y^{*}$ is \textit{${wk}^{*}$-measurable} if for all $y \in Y$, the function $t\mapsto \langle f(t), y\rangle$ is measurable over $X$, where $\langle \cdot, \cdot\rangle$ denotes the dual pairing of $Y$ and $Y^{*}$; the same notation will be used for Hilbertian inner products (and it will be clear from the context which of these two uses is intended). If $f: X\rightarrow Y^{*}$ is ${\rm wk}^{*}$-measurable, then the norm $t \mapsto \|f(t)\|$ is measurable on $X$. Let $L_{\sigma}^{\infty}(X, Y^{*})$ denote the space of all ${\rm wk}^{*}$-measurable functions $f: X\rightarrow Y^{*}$ (again, up to almost everywhere zero functions) such that $\|f(\cdot)\|$ is essentially bounded. This is also a Banach space, under the norm $\|f\| = \big\|\|f(\cdot)\|\big\|_{L^{\infty}(X, \mu)}$. Furthermore, by the Dunford-Pettis Theorem (see \cite{dp}) we have the isometric isomorphism
\begin{gather}\label{eqn_dp_iso}
    L_{\sigma}^{\infty}(X, Y^{*}) \cong L^{1}(X, Y)^{*},\;\;\;\;
    \langle f, F\rangle := \int\limits_{X}\langle f(t), F(t)\rangle d\mu(t).
\end{gather}

\subsection{Reproducing kernel Hilbert algebras}\label{ss:rkha} The following material can be found in \cite{gm_rkha} (see also \cite{dg, gljmpss, gm}). Let $\mathcal{H}$ be an RKHS of complex-valued functions on a set $X$, with reproducing kernel $k: X\times X\rightarrow \mathbb{C}$ and inner product denoted by $\langle \cdot, \cdot\rangle_{\mathcal{H}}$. We write $k_{x} := k(x, \cdot)$ for the kernel section at element $x \in X$, with $\delta_{x}: \mathcal{H}\rightarrow \mathbb{C}$ the corresponding pointwise evaluation functional $\delta_{x}(f) = f(x) = \langle f, k_{x}\rangle_{\mathcal{H}}$. We also let $\varphi: X\rightarrow \mathcal{H}$ denote the canonical feature map, where $\varphi(x) = k_{x}$ for $x \in X$. As first (formally) introduced in \cite{gm_rkha}, the RKHS $\mathcal{H}$ is a \textit{reproducing kernel Hilbert algebra (RKHA)} if the mapping $k_{x} \mapsto k_{x}\otimes k_{x}$, for $x \in X$, extends to a bounded linear map $\Delta: \mathcal{H}\rightarrow \mathcal{H}\otimes \mathcal{H}$. In this case, $\Delta^{*}$ (the formal adjoint of $\Delta$) is a bounded linear map which implements pointwise multiplication: indeed, for $f, g \in \mathcal{H}$ and $x \in X$ we check
\begin{gather*}
    \langle \Delta^{*}(f\otimes g), k_{x}\rangle_{\mathcal{H}} = \langle f\otimes g, \Delta(k_{x})\rangle_{\mathcal{H}} = \langle f\otimes g, k_{x}\otimes k_{x}\rangle_{\mathcal{H}} \\= \langle f, k_{x}\rangle_{\mathcal{H}}\langle g, k_{x}\rangle_{\mathcal{H}} = f(x)g(x).
\end{gather*}
\noindent Thus, $\mathcal{H}$ is both a function space with Hilbertian structure, and a commutative algebra with respect to pointwise function multiplication. 

If $\pi: \mathcal{H}\rightarrow \mathcal{B}(\mathcal{H})$ denotes the representation sending $f \mapsto \pi(f)$ where $\pi(f)g := fg$ for $g \in \mathcal{H}$, one may check that for norm $\|f\|_{\rm op} := \|\pi(f)\|_{\mathcal{B}(\mathcal{H})}$, $\|fg\|_{\rm op} \leq \|f\|_{\rm op}\|g\|_{\rm op}$ for all $f, g \in \mathcal{H}$ and that $\|\cdot\|_{\rm op}$ generates a coarser topology on $\mathcal{H}$ than the topology generated by $\|\cdot\|_{\mathcal{H}}$. If $k$ is real-valued, we may equip $\mathcal{H}$ with a pointwise complex conjugation $*: \mathcal{H}\rightarrow \mathcal{H}$, where $f^{*} := \overline{f}$ for $f \in \mathcal{H}$, turning $\mathcal{H}$ into a Banach $*$-algebra. If the constant function $1_{X}: X\rightarrow \mathbb{C}$ where $1(x) = 1$ for any $x \in X$ is in $\mathcal{H}$, then $\mathcal{H}$ is a unital Banach $*$-algebra; furthermore, the Hilbert space and operator norms become equivalent, as
\begin{gather*}
    \frac{1}{\|1_{X}\|_{\mathcal{H}}}\|f\|_{\mathcal{H}} \leq \|f\|_{\rm op} \leq \|\Delta\|_{\mathcal{B}(\mathcal{H})}\|f\|_{\mathcal{H}}.
\end{gather*}
\noindent We note that an RKHA $\mathcal{H}$ is also a co-commutative, co-associative co-algebra with $\Delta: \mathcal{H}\rightarrow \mathcal{H}\otimes \mathcal{H}$ functioning as the co-multiplication operator. Co-associativity (i.e., the formal equality $(\Delta\otimes {\rm id})\circ \Delta = ({\rm id}\otimes \Delta)\circ \Delta$) follows from associativity of multiplication $\Delta^{*}$. Recall that morphisms between reproducing kernel Hilbert spaces $(X_{1}, \mathcal{H}_{1})$ and $(X_{2}, \mathcal{H}_{2})$ take the form of bounded linear maps $T: \mathcal{H}_{1}\rightarrow \mathcal{H}_{2}$ such that $T(k_{x}) = k_{F(x)}$ for some set mapping $F: X_{1}\rightarrow X_{2}$. As mentioned in \cite[Section 2]{gm}, when $\mathcal{H}_{i}$ is an RKHA over $X_{i}, i = 1, 2$ a morphism $T: \mathcal{H}_{1}\rightarrow \mathcal{H}_{2}$ on the RKHS level is equivalent to the existence of a Banach algebra homomorphism $T^{*}: \mathcal{H}_{2}\rightarrow \mathcal{H}_{1}$ (i.e., that $T^{*}$ is a multiplicative mapping). We will utilize this fact in Section \ref{sec:classical_systems}. 

\subsection{Operator algebras and completely positive maps}\label{subsec:operator_algebras_cp_maps}
We denote by $\cl{B}(H)$ the space of all bounded linear operators on $H$, with $I_{H}$ (or sometimes just $I$ if the context is clear) the identity operator on $H$. Additionally, we let $\mathcal{T}(H)$ denote the space of trace-class (with respect to ${\rm Tr}$ on $\mathcal{B}(H)$) operators. If $a \in \mathcal{B}(H)$ is a bounded linear operator, we let $\sigma(a)$ denote the \textit{spectrum} of $a$ inside $\mathcal{B}(H)$: this is the set 
\begin{gather*}
    \sigma(a) = \{\lambda \in \mathbb{C}: \; (a-\lambda I) \; {\rm is \; not \; invertible \; on \;} H\}.
\end{gather*}
Similarly, we let $\rho(a) = \mathbb{C}\setminus \sigma(a)$ denote the \textit{resolvent} of $a$. For $z \in \rho(a)$, we let $R_{z}(a) = (a-zI)^{-1}$. A \textit{C*-algebra} is any $*$-subalgebra $\mathcal{A} \subseteq \mathcal{B}(H)$ closed under the norm topology. The convex cone of positive elements in $\mathcal{A}$ is denoted $\mathcal{A}^{+}$, and for any $n \in \mathbb{N}$ we write $M_{n}(\mathcal{A})$ to denote the ${\rm C}^{*}$-algebra of all $n\times n$ matrices with entries in $\mathcal{A}$. For self-adjoint elements $a, b \in \mathcal{A}$ we write $a \leq b$ if $b-a \in \mathcal{A}^{+}$. A linear map $\Phi: \cl{A}\rightarrow \cl{B}$ between ${\rm C}^{*}$-algebras is called \textit{positive}, if $\Phi(\cl{A}^{+}) \subseteq \cl{B}^{+}$, and \textit{completely positive} if the amplification map $\Phi^{(n)}: M_{n}(\cl{A})\rightarrow M_{n}(\cl{B})$ given by $\Phi^{(n)}((x_{ij})_{i,j}) = (\Phi(x_{ij}))_{i,j}$ is positive for every $n \in \mathbb{N}$. A \textit{state} on a ${\rm C}^{*}$-algebra is a linear functional $\phi: \mathcal{A}\rightarrow \mathbb{C}$ such that $\phi(\mathcal{A}^{+}) \subseteq \mathbb{R}_{+}$, and $\|\phi\| = 1$. In the case where $\mathcal{A}$ is unital, every state $\phi: \mathcal{A}\rightarrow \mathbb{C}$ satisfies $\phi(1_{\mathcal{A}}) = 1$. 

A \textit{von Neumann algebra} is any unital $*$-subalgebra $\mathcal{M}\subseteq \mathcal{B}(H)$ closed under the strong (equivalently, weak) operator topology on $H$. Unless otherwise specified, we assume the unit of $\mathcal{M}$ to be the same as $I$, the identity on $H$. The commutant of a von Neumann algebra $\mathcal{M}\subseteq \cl{B}(H)$ will be denoted by $\mathcal{M}'$, and its Banach space predual by $\mathcal{M}_{\ast}$. Note that every von Neumann algebra is automatically a ${\rm C}^{*}$-algebra, and by the Bicommutant Theorem we have $\mathcal{M}= \mathcal{M}''$ (see \cite[Chapter II]{takesaki}). We call $\mathcal{M}$ a \textit{factor} if $\mathcal{M}\cap \mathcal{M}' = \mathbb{C}I$. The group of all unitary operators in $\mathcal{M}$ is denoted by $U(\mathcal{M})$, and the lattice of projections in $\mathcal{M}$ is denoted $\cl{P}(\mathcal{M})$. If $K \subseteq H$ is a closed subspace, we let $p_{K}$ denote the orthogonal projection onto $K$. If $p \in \cl{P}(\cl{M})$, we let $p^{\perp} = I-p$. We say $\mathcal{M}$ is \textit{$\sigma$-finite}, or \textit{countably decomposable}, if there are at most countably many non-zero orthogonal projections in $\mathcal{P}(\mathcal{M})$. 

For von Neumann algebras $\mathcal{M}, \mathcal{N}$, a linear map $\Phi: \mathcal{M}\rightarrow \mathcal{N}$ is \textit{normal} if it is continuous with respect to the weak topologies on each respective space. If $\mathcal{N} \subseteq \mathcal{M}$ is a subalgebra of $\mathcal{M}$, a \textit{conditional expectation} from $\mathcal{M}$ to $\mathcal{N}$ is a linear map $E: \mathcal{M}\rightarrow \mathcal{N}$ such that:
\begin{itemize}
    \item[(i)] $E$ is positive, in that $E(\mathcal{M}^{+}) \subseteq \cl{N}^{+}$;
    \item[(ii)] $E(n) = n$ for all $n \in \cl{N}$;
    \item[(iii)] $E(n_{1}mn_{2}) = n_{1}E(m)n_{2}$ for all $n_{1}, n_{2} \in \mathcal{N}$ and $m \in \mathcal{M}$.
\end{itemize}
Thus, $E$ is a positive projection from $\mathcal{M}$ onto $\mathcal{N}$ (so that $E\circ E(m) = E(m)$ for all $m \in \mathcal{M}$), and is a bimodule map over $\mathcal{N}$ (i.e., condition (iii) above).

A von Neumann algebra $\mathcal{M}$ is said to be \textit{injective} if for any unital ${\rm C}^{*}$-algebra $\mathcal{A}$ and for any norm-closed, self-adjoint unital subspace $\mathcal{S} \subseteq \mathcal{A}$, any unital, completely positive map $\phi: \mathcal{S}\rightarrow \mathcal{M}$ extends to a unital completely positive map $\phi: \mathcal{A}\rightarrow M$. Under Arveson's Extension Theorem, this is equivalent to the existence of a conditional expectation $E: \mathcal{B}(H)\rightarrow \mathcal{M}$ when $\mathcal{M}\subseteq \mathcal{B}(H)$ is represented faithfully on some Hilbert space $H$ (for the general idea of the proof, see \cite[Proposition 15.1]{paulsen}). If, in addition, $\mathcal{M}_{\ast}$ is separable, we say it is \textit{approximately finite-dimensional (AFD)} if there exists an increasing sequence of finite-dimensional $*$-sub-algebras of $\mathcal{M}$ whose union is dense in $\mathcal{M}$ under the strong operator topology. The seminal work of Connes (see \cite{connes}) showed that when $\mathcal{M}$ has a separable predual, being injective and AFD are equivalent.

\subsection{Traces and affiliated operators} If $\mathcal{M}$ is a von Neumann algebra on $H$, a \textit{trace} is a map $\tau: \mathcal{M}^{+}\rightarrow [0, \infty]$ satisfying:
\begin{itemize}
    \item[(i)] $\tau(a+b) = \tau(a)+\tau(b)$, for $a, b\in \mathcal{M}^{+}$;
    \item[(ii)] $\tau(\lambda a) = \lambda \tau(a)$, for all $\lambda \in [0, \infty)$ and $a \in \mathcal{M}^{+}$;
    \item[(iii)] $\tau(a^{*}a) = \tau(aa^{*})$, for all $a \in \mathcal{M}$.
\end{itemize}
\noindent The trace $\tau$ is \textit{normal} if $\sup_{\lambda}\tau(a_{\lambda}) = \tau(\sup_{\lambda}a_{\lambda})$ for any bounded, increasing net $(a_{\lambda})_{\lambda \in \Lambda}$ in $\mathcal{M}^{+}$ (which is equivalent to the previous definition for a normal linear map), \textit{semifinite} if for any non-zero $a \in \mathcal{M}^{+}$ there exists a non-zero $b \in \mathcal{M}^{+}$ such that $b \leq a$ and $\tau(b) < \infty$, and \textit{faithful} if $\tau(a^{*}a) = 0$ implies $a = 0$, for $a \in \mathcal{M}$. If $\tau(1) < \infty$ (where $1$ denotes the identity of $\mathcal{M}$), $\tau$ is said to be \textit{finite}. When $\tau$ is finite, we automatically normalize so that $\tau(1) = 1$, making $\tau$ a proper state on $\mathcal{M}$. A subspace $K \subseteq H$ is called \textit{$\tau$-dense} (or \textit{completely dense}) for a finite von Neumann algebra $(\mathcal{M}, \tau)$ if there exists an increasing net $(p_{\alpha})_{\alpha} \subseteq \mathcal{P}(\mathcal{M})$ such that $p_{\alpha} \nearrow I$ (strongly), and $p_{\alpha}H \subseteq K$ for any $\alpha$. 

For a faithful, normal, semifinite trace $\tau$ on $\mathcal{M}$, we write $(\mathcal{M}, \tau)$ and say $\mathcal{M}$ is \textit{semifinite}. It can be shown (see \cite[Chapter 5]{takesaki}) for a von Neumann algebra $\mathcal{M}$ that if $\mathcal{M}$ admits a finite, normal, faithful tracial state $\tau$, then $\mathcal{M}$ is \textit{finite} (in terms of the type classification); that is, there are no non-unitary isometries in $\mathcal{M}$. From now on, we make this identification between finite traces and finite type von Neumann algebras without further mention. 

For the benefit of the reader, we sketch the construction of the \textit{GNS representation of $\cl{M}$ using $\tau$}; for greater detail on these points, see \cite[Chapter I.9]{takesaki}. Define a sesquilinear form $\langle \cdot, \cdot\rangle_{\tau}$ on $\mathcal{M}$ via $\langle a, b\rangle_{\tau} := \tau(b^{*}a)$, for $a, b \in \mathcal{M}$. Set $N_{\tau} := \{a \in \cl{M}: \; \tau(a^{*}a) = 0\}$; with an application of the Cauchy-Schwarz inequality, $N_{\tau}$ may be realized as the subspace of all $a \in \cl{M}$ such that $\langle a, b \rangle_{\tau} = 0$ for all $b \in \mathcal{M}$. Define $L^{2}(\cl{M}, \tau)$ as the completion of the pre-Hilbert space $\cl{M}/N_{\tau}$ with respect to the inner product $\langle \hat{a}, \hat{b}\rangle_{\tau} = \tau(b^{*}a)$ (assuming $\hat{a}, \hat{b}$ are in the equivalence classes of $a, b$ under the quotient, respectively). As $\tau$ is faithful and normal, there exists a faithful, normal $*$-homomorphism $\pi_{\tau}: \cl{M}\rightarrow \mathcal{B}(L^{2}(\cl{M}, \tau))$ given by $\pi_{\tau}(a)\hat{b} = \widehat{ab}$. With $\xi_{\tau} = \hat{I}$ in $L^{2}(\cl{M}, \tau)$ one may compute $\tau(a) = \langle \pi_{\tau}(a)\xi_{\tau}, \xi_{\tau}\rangle_{\tau}$ for all $a \in \cl{M}$. As $\tau$ is faithful, $\xi_{\tau}$ is both cyclic and separating for the von Neumann algebra $\pi_{\tau}(\cl{M})$. We say that the GNS representation of $\mathcal{M}$ using $\tau$ is the \textit{standard representation $(\mathcal{M}, \tau)$}, and that $\mathcal{M}$ acting on $L^{2}(\mathcal{M}, \tau)$ is in \textit{standard form}.

If $a$ is a densely defined, closable operator on $H$, we say $a$ is \textit{affiliated with $\mathcal{M}$}, and write $a \; \eta \; \mathcal{M}$, if for every unitary operator $u \in U(\mathcal{M}')$, we have $ua = au$. The set of all densely defined, closed operators affiliated with $\mathcal{M}$ is denoted by $L^{0}(\mathcal{M})$, and each element in $L^{0}(\mathcal{M})$ is called an \textit{affiliated operator}. If $\mathcal{M}$ is, in addition, endowed with a tracial state $\tau$ we often will write $L^{0}(\mathcal{M}, \tau)$ for the set of all operators affiliated with $\mathcal{M}$.

\begin{remark}
\rm Note that if $a$ is densely defined, closable on $H$, and affiliated with $\mathcal{M}$, then so is $\overline{a}$. Additionally, as $ua = au$ this means $u({\rm dom}(a)) = {\rm dom}(a)$ for every $u \in U(\mathcal{M}')$. 
\end{remark}

Suppose $a: {\rm dom}(a) \rightarrow H$ is a closed, densely-defined and normal operator. The set $\{a\}'$ of all bounded linear operators on $H$ which commute with $a$ is a strongly-closed subspace (where strong closure follows from the closure of $a$). Similarly, $\{a^{*}\}'$ is strongly-closed, with $\{a\}' \cap \{a^{*}\}'$ a strongly-closed, unital $*$-subalgebra--- and hence, a von Neumann algebra. We set
\begin{gather*}
    W^{*}(a) := \bigg(\{a\}' \cap \{a^{*}\}'\bigg)',
\end{gather*}
\noindent and say that $W^{*}(a)$ is the \textit{von Neumann algebra generated by $a$}. Note that, by construction, $x \in L^{0}(W^{*}(a))$. 

Let $a$ be affiliated with $\mathcal{M}$, and take 
\begin{gather*}
    a = v|a|, \;\;\;\; |a| = \int\limits_{0}^{\infty}\lambda d e(\lambda),
\end{gather*}
\noindent where the former is the polar decomposition of $a$ and the latter is the spectral decomposition of $|a|$. Note here that $e(\lambda) = e([0, \lambda])$, the spectral projection onto $[0, \lambda]$. We say that $a$ is \textit{$\tau$-measurable} if $\lim\limits_{\lambda\rightarrow \infty}\tau(e(\lambda)^{\perp}) = 0$, or (equivalently) that $\tau(e(\lambda)^{\perp}) < \infty$ for large $\lambda > 0$. We can then define \textit{generalized singular numbers} of $a$, with
\begin{gather*}
    \mu_{t}(a) = \inf\{\lambda > 0: \; \tau(e_{\lambda}(|a|)) \leq t\}, \;\;\;\; t > 0.
\end{gather*} 
We can define the continuous, real valued function $\mu(a): (0, \infty)\rightarrow (0, \infty)$ via $t\mapsto \mu_{t}(|a|)$, for any $\tau$ measurable operator $a \in L^{0}(\mathcal{M}, \tau)$. 

Let $L(\mathcal{M}, \tau)$ denote the collection of all $\tau$-measurable operators of $\mathcal{M}$; then $L(\mathcal{M}, \tau)$ becomes a (metrizable) topological $*$-algebra (see \cite{nelson_three}) with the topology generated by a basis of neighborhoods at $0$
\begin{gather*}
    V(\epsilon, \delta) = \{a \in L(\mathcal{M}, \tau): \; \mu_{\epsilon}(x) \leq \delta\}, \;\;\;\; \epsilon, \delta > 0.
\end{gather*}
\noindent Convergence in this topology is called \textit{convergence in measure}; in the case when $\tau(1) < \infty$, then $L^{0}(\mathcal{M}, \tau) = L(\mathcal{M}, \tau)$ automatically, as for any $x \in L^{0}(\mathcal{M}, \tau)$ we have $\tau(e(\lambda)^{\perp}) < \infty$ for some $\lambda$. Furthermore, if $\mathcal{M}$ acts on a separable Hilbert space it is shown (see \cite{am_one}) that convergence in measure $a_{n} \rightarrow a$ as $n\rightarrow \infty$ for $(a_{n})_{n=1}^{\infty} \subseteq L(\mathcal{M}, \tau)$ and $a \in L(\mathcal{M}, \tau)$ is equivalent to convergence in the strong-resolvent topology of $a_{n}\rightarrow_{\rm SRT} a$. As we will restrict ourselves to dealing only with finite tracial von Neumann algebras in the sequel, we assume all affiliated operators will be $\tau$-measurable with respect to some finite trace. 

The trace $\tau$ extends to a positive tracial functional on $L(\mathcal{M}, \tau)^{+}$ satisfying
\begin{gather*}
    \tau(a) = \int\limits_{0}^{\infty}\mu_{t}(a)dt, \;\;\;\; a\in L(\mathcal{M}, \tau)^{+}.
\end{gather*}
\noindent We then define
\begin{gather*}
    L^{1}(\mathcal{M}, \tau) = \{a \in L(\mathcal{M}, \tau): \; \tau(|a|) < \infty\}, \;\;\;\; \|a\|_{1} = \tau(|a|).
\end{gather*}
\noindent It is known (see \cite[Chapter IX]{takesaki_two}, for instance) that $\|\cdot\|_{1}$ is a norm with $L^{1}(\mathcal{M}, \tau)$ a Banach space. Directly, potentially unbounded (as an operator on a Hilbert space) $a \in L^{1}(\mathcal{M}, \tau)$ if and only if $\mu(a) \in L^{1}(0, \infty)$ and $\|a\|_{1} = \|\mu(a)\|_{L^{1}(0, \infty)}$. As $\mu(a) = \mu(a^{*}) = \mu(|a|)$, we have that $a \in L^{1}(\mathcal{M}, \tau)$ if and only if $a^{*} \in L^{1}(\mathcal{M}, \tau)$, and $\|a\|_{1} = \|a^{*}\|_{1}$. Furthermore, we have an isometric identification of $L^{1}(\mathcal{M}, \tau)$ with $\mathcal{M}_{\ast}$, the pre-dual of $\mathcal{M}$, via the following: for $a \in \mathcal{M}, b \in L^{1}(\mathcal{M}, \tau)$ as $\tau$ extends to a continuous functional on $L^{1}(\mathcal{M}, \tau)$ then
\begin{gather*}
    \mathcal{M}\times L^{1}(\mathcal{M}, \tau) \rightarrow \mathbb{C}, \;\;\;\; (a, b) \mapsto \tau(ab),
\end{gather*}
is well-defined. Equivalently, for every $b \in L^{1}(\mathcal{M}, \tau)$ there exists an $\omega_{b} \in \mathcal{M}_{\ast}$ such that $\omega_{b}(a) = \tau(ab)$ for $a \in \mathcal{M}$.  

While the following two results are well-known, we provide a proof for the benefit of the reader. 
\begin{proposition}
Let $\mathcal{M}$ be a von Neumann algebra acting on Hilbert space $H$. Then $U(\mathcal{M})$ is closed under the strong operator topology if and only if $\mathcal{M}$ is finite. 
\end{proposition}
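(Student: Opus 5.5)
Both directions go through the characterization of finiteness recalled in the preliminaries: $\mathcal{M}$ is finite precisely when it contains no non-unitary isometries, i.e.\ $v^{*}v = I$ forces $vv^{*} = I$. The reverse implication (SOT-closed $\Rightarrow$ finite) will be handled by contraposition via the Wold decomposition. The forward implication (finite $\Rightarrow$ SOT-closed) will use the fact that a strong limit of unitaries is an isometry, and finiteness then upgrades it to a unitary.

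For finite $\Rightarrow$ closed, let $(u_{\lambda})$ be a net in $U(\mathcal{M})$ converging strongly to $x \in \mathcal{B}(H)$. Since $\mathcal{M}$ is strongly closed, $x \in \mathcal{M}$. Because $\|u_{\lambda}\xi\| = \|\xi\|$ for every $\xi$, passing to the limit gives $\|x\xi\| = \|\xi\|$, so $x$ is an isometry: $x^{*}x = I$. Finiteness of $\mathcal{M}$ then forces $xx^{*} = I$, so $x \in U(\mathcal{M})$. Alternatively, with a faithful normal tracial state $\tau$ one has $\tau(I - xx^{*}) = \tau(I - x^{*}x) = 0$, and faithfulness gives $xx^{*} = I$ directly. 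This direction is routine.

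For closed $\Rightarrow$ finite, suppose $\mathcal{M}$ is not finite, so it contains an isometry $v$ with $vv^{*} \neq I$. We want unitaries in $\mathcal{M}$ converging strongly to $v$; then $v$ lies in the SOT-closure of $U(\mathcal{M})$ but not in $U(\mathcal{M})$, a contradiction. Set $p = I - vv^{*} \neq 0$. The projections $p_{n} = v^{n}p(v^{*})^{n}$, $n \geq 0$, are mutually orthogonal and lie in $\mathcal{M}$, and $v$ maps $p_{n}H$ isometrically onto $p_{n+1}H$. For each $N$, define $u_{N}$ to act as $v$ on $(I - \sum_{n \geq N} p_{n})H \oplus \bigoplus_{n<N-1} p_{n}H$, so that it shifts $p_{n}H \to p_{n+1}H$ for $n < N-1$ and is the identity on the complementary (unitary) part. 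On $p_{N-1}H$ it acts as the partial isometry $p(v^{*})^{N-1}$ back onto $p_{0}H$. On the tail $\bigoplus_{n \geq N}p_{n}H$ it is the identity. Each $u_{N}$ is a finite combination of $v$, $v^{*}$ and projections in $\mathcal{M}$, hence lies in $\mathcal{M}$, and it is unitary because it cyclically permutes $p_{0}H, \dots, p_{N-1}H$ unitarily and is unitary elsewhere.

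The main obstacle is verifying that $u_{N} \to v$ strongly. The discrepancy $u_{N}\xi - v\xi$ is supported on vectors in $\bigoplus_{n \geq N-1} p_{n}H$, and $\|(\sum_{n \geq N-1}p_{n})\xi\| \to 0$ for every $\xi$. The differences are uniformly bounded by $2\|\cdot\|$, so strong convergence follows. Careful bookkeeping is needed to check that the complement of $\sum_{n} p_{n}$ (the unitary part in the Wold decomposition) is handled correctly, namely that $u_{N}$ agrees with $v$ there.
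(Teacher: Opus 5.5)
Your argument is correct and has the same skeleton as the paper's proof. The forward direction is identical: a strong limit of unitaries in $\mathcal{M}$ is an isometry in $\mathcal{M}$, hence a unitary by finiteness. The converse likewise reduces to showing that a non-unitary isometry $v \in \mathcal{M}$ lies in the strong closure of $U(\mathcal{M})$.

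The difference is in how that key fact is obtained. The paper simply asserts, citing the Kaplansky Density Theorem, that every isometry in $\mathcal{M}$ is a strong limit of unitaries in $\mathcal{M}$. Kaplansky's theorem in its usual form only says that the unitaries of a unital ${\rm C}^{*}$-algebra are strongly dense in the unitaries of its strong closure, so it does not directly give non-unitary isometries as such limits. Your Wold-decomposition construction, which cyclically permutes $p_{0}H, \dots, p_{N-1}H$ and truncates the shift, is the standard proof of exactly this fact. Your version is therefore self-contained, and together with the first half it shows that the strong closure of $U(\mathcal{M})$ is precisely the set of isometries of $\mathcal{M}$. Working with nets rather than sequences is also the right generality for the strong operator topology. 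The paper's version gains only brevity.

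Two slips in your description of $u_{N}$ need fixing.
\begin{enumerate}
\item The first summand should be $qH$ with $q = I - \sum_{n \geq 0} p_{n}$. As written, $(I - \sum_{n \geq N} p_{n})H$ contains $p_{N-1}H$, where you want a different action.
\item $u_{N}$ must act as $v$, not as the identity, on $qH$; otherwise $u_{N}\xi$ would not converge to $v\xi$ for $\xi \in qH$.
\end{enumerate}
The bookkeeping you flag is short. From $p_{n+1}v = v^{n+1}p(v^{*})^{n} = vp_{n}$ and $p_{0}v = 0$ you get $v\sum_{n}p_{n} = (\sum_{n}p_{n})v$, so $v$ commutes with $q$. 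Then $vqv^{*} = q(I - p_{0}) = q$ shows that $vq$ is a unitary on $qH$. Concretely, $u_{N} = vq + \sum_{n=0}^{N-2} vp_{n} + p(v^{*})^{N-1} + \sum_{n \geq N} p_{n}$, which is visibly in $\mathcal{M}$.
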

\begin{proof}
First, assume $\mathcal{M}$ is a finite von Neumann algebra. Suppose that $v \in \mathcal{M}$ is a strong limit of unitaries in $\mathcal{M}$; that is, 
\begin{gather*}
    v = {\rm s}-\lim\limits_{n}u_{n}, \;\;\;\; u_{n} \in U(\mathcal{M}).
\end{gather*}
\noindent We claim that $v$ is an isometry; indeed, for any $\xi \in H$, we have
\begin{gather*}
    \|v(\xi)\| = \lim\limits_{n}\|u_{n}(\xi)\| = \|\xi\|,
\end{gather*}
\noindent as each $u_{n} \in U(\mathcal{M})$. Then as $v \in \mathcal{M}$, with $\mathcal{M}$ finite, $v$ must be unitary. Hence, $\overline{U(\mathcal{M})}^{\rm SOT} = U(\mathcal{M})$. 

Now, assume that $U(\mathcal{M})$ is closed in the strong operator topology; we show that $\mathcal{M}$ must be finite. By the first half of the argument in the previous paragraph, any SOT-limit of unitaries in $\mathcal{M}$ is an isometry; furthermore, an application of the Kaplansky Density Theorem (see \cite[Chapter I.10]{takesaki}) shows that every isometry in $\mathcal{M}$ arises as the SOT-limit of unitaries in $\mathcal{M}$. As we assume $U(\mathcal{M})$ is closed in the strong operator topology, this implies $v$ must be a unitary as well. Now, if $\mathcal{M}$ were infinite type, there exists a projection $p \in \mathcal{P}(\mathcal{M})$ and a partial isometry $v \in \mathcal{M}$ such that $p < I$ (as projections) with $p = vv^{*}$ and $v^{*}v = I$. By the latter equality, $v$ is an isometry, and thus must be unitary by our assumption on $U(\mathcal{M})$. However, if $v$ were unitary we would have $I = v^{*}v = p < I$, a contradiction. Thus, $\mathcal{M}$ must be finite type, as claimed.
\end{proof}
We clarify that in the following result, by ``separability of $\mathcal{M}$" we mean as a Banach space under the strong operator topology, and not in the norm topology. 
\begin{lemma}
Let $(\mathcal{M}, \tau)$ be finite, and assume $M_{\ast}$ is separable. Then $L^{1}(\mathcal{M}, \tau)$ and $\mathcal{M}$ are separable.
\end{lemma}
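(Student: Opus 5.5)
The first claim is immediate from the identification recalled just before the statement. The pairing $(a,b)\mapsto \tau(ab)$ gives an isometric isomorphism $L^{1}(\mathcal{M},\tau)\cong \mathcal{M}_{\ast}$, $b\mapsto \omega_b$. Since $\mathcal{M}_\ast$ is norm-separable by hypothesis, and isometric isomorphisms preserve separability, $L^{1}(\mathcal{M},\tau)$ is separable in $\|\cdot\|_1$. The only point to check is that this map is surjective as well as isometric. For a finite trace this is the standard Radon--Nikodym-type theorem for normal functionals (every $\omega\in\mathcal{M}_\ast$ has the form $\tau(\cdot\, b)$ with $b\in L^1$), as in \cite[Chapter IX]{takesaki_two}, and I will quote it.

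For $\mathcal{M}$ I will pass to the standard representation. Because $\tau$ is finite, $\mathcal{M}\subseteq L^{1}(\mathcal{M},\tau)$, and the map $a\mapsto \hat a$ identifies $\mathcal{M}$ with a dense subspace of $L^{2}(\mathcal{M},\tau)$. I first show that $L^{2}(\mathcal{M},\tau)$ is separable. Take a countable $\|\cdot\|_1$-dense subset of $L^1$. Cut each element down by spectral projections of its absolute value, replacing $b=v|b|$ with $v|b|e_{n}(|b|)$, where $e_n(|b|)$ is the spectral projection of $|b|$ onto $[0,n]$. This produces a countable set $D\subseteq\mathcal{M}$ that is still $\|\cdot\|_1$-dense in $\mathcal{M}$. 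To see that $D$ is dense in $\|\cdot\|_2$ as well, I will use the H\"older-type estimate
\[
\|x\|_2^2=\tau(x^*x)\le \|x\|\,\|x\|_1 .
\]
It only controls elements of bounded operator norm, so I intersect with operator-norm balls. Concretely, approximate a given $a\in\mathcal{M}$ with $\|a\|\le R$ in $\|\cdot\|_1$ by some $d\in D$. Then truncate $d$ by a spectral cutoff so that its norm is at most $2R$; this only lowers the $\|\cdot\|_1$ error up to a constant. Finally apply the estimate to $a-d$. So every $a\in\mathcal{M}$ is a $\|\cdot\|_2$-limit of elements from a countable set, and $L^{2}(\mathcal{M},\tau)$ is separable.

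Separability of $\mathcal{M}$ in the strong operator topology then follows from the fact that, on bounded sets, the SOT on $\mathcal{M}$ acting on the separable space $L^2(\mathcal{M},\tau)$ is metrizable by $x\mapsto\sum_k 2^{-k}\|x\xi_k\|$ for a dense sequence $(\xi_k)$. I plan the cleaner route of the $\|\cdot\|_2$-norm: it induces the strong topology on bounded subsets of $\mathcal{M}$, since $\|x\hat b\|_2=\|xb\|_2\le\|b\|\,\|x\|_2$. Hence the countable set $\{d\in D_R\}$, the truncations of the previous step with rational radius $R$, is SOT-dense in each ball, and therefore in $\mathcal{M}$. If $\mathcal{M}$ is not originally in standard form, the (countably decomposable, since $\tau$ is faithful) normal representation on $H$ carries the same strong topology on bounded sets as the $\sigma$-strong topology. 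On bounded sets the latter is determined by $\mathcal{M}_\ast$, so the conclusion transfers.

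The main obstacle is the truncation step: keeping the $\|\cdot\|_1$ error small while forcing a uniform operator-norm bound. My plan here is to truncate $a-d$ appropriately, or to use Kaplansky density in the form of the operator-norm-bounded approximation by bounded functional calculus, handling it by the inequality $\|d\,e_{[0,2R]}(|d|)-a\|_1\le 2\|d-a\|_1$. If that inequality proves awkward, I fall back to approximating by $f(d)$ for a suitable bounded continuous cutoff $f$, exploiting continuity of functional calculus in measure.
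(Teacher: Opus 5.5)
Your argument is correct. For the $L^{1}$ part it coincides with the paper's, which simply invokes $L^{1}(\mathcal{M},\tau)\cong\mathcal{M}_{\ast}$. For $\mathcal{M}$ the routes differ.

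\textbf{The paper's route.} It passes to the standard representation and asserts, without proof, that $L^{2}(\mathcal{M},\tau)$ is separable. It then argues purely topologically: the unit ball of $\mathcal{B}(L^{2}(\mathcal{M},\tau))$ is SOT-metrizable (implicitly also SOT-separable), so its subset $\mathcal{M}_{1}$ is SOT-separable. Finally, $\mathcal{M}$ is a countable union of scaled balls.

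\textbf{Your route.} You actually prove the separability of $L^{2}(\mathcal{M},\tau)$ that the paper takes for granted. You build an explicit countable family of uniformly bounded elements from an $L^{1}$-dense sequence, using spectral truncation and the estimate $\|x\|_{2}^{2}\le\|x\|\,\|x\|_{1}$. You then use traciality, $\|xb\|_{2}\le\|b\|\,\|x\|_{2}$, to identify the SOT on bounded parts of $\mathcal{M}$ with the $\|\cdot\|_{2}$-topology, so the same countable family is SOT-dense. You also make explicit, via the $\sigma$-strong topology, why the choice of representation does not matter; the paper's ``without loss of generality'' leaves this implicit.

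\textbf{Comparison.} The paper's route is shorter once standard facts are granted. Yours is self-contained and produces a concrete bounded approximating family.

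\textbf{The truncation step.} The inequality you flagged as the main obstacle does hold. Put $p=e_{(2R,\infty)}(|d|)$. Since $\|dp\xi\|\ge 2R\|p\xi\|\ge 2\|ap\xi\|$, you get $\|(d-a)p\xi\|\ge\tfrac12\|dp\xi\|$ for all $\xi$. Operator monotonicity of the square root then gives $|(d-a)p|\ge\tfrac12|d|p$. Hence $\tau(|d|p)\le 2\|d-a\|_{1}$, and so $\tau(p)\le\|d-a\|_{1}/R$. It follows that
\[
\|d(1-p)-a\|_{1}\le\|(d-a)(1-p)\|_{1}+\|ap\|_{1}\le\|d-a\|_{1}+R\,\tau(p)\le 2\|d-a\|_{1}.
\]
One small slip: the H\"older-type estimate should then be applied to $a-d(1-p)$, not to $a-d$.
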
\label{lem:sep_of_lp}
\begin{proof}
Under the isometric identification $L^{1}(\mathcal{M}, \tau) \cong M_{\ast}$, the case when $p = 1$ is trivial. Separability of $\mathcal{M}_{\ast}$ implies that $\mathcal{M}$ can be faithfully represented on a separable Hilbert space $H$; without loss of generality, we may assume we represent $\mathcal{M}$ on $L^{2}(\mathcal{M}, \tau)$ which is separable. The unit ball of $\mathcal{B}(L^{2}(\mathcal{M}, \tau))$ is metrizable in the strong operator topology in this case, which implies $\mathcal{M}_{1}$ (the unit ball of $\mathcal{M}$) is metrizable and separable in the strong operator topology. This implies separability of $\mathcal{M}$ under the strong operator topology, as the countable union of (scaled) unit balls. 
\end{proof}
The following is also an easily verified fact. 
\begin{lemma}\label{l_spatial_isomorphisms_unitaries}
Let $\mathcal{M}\subseteq \cl{B}(H), \mathcal{N}\subseteq \cl{B}(K)$ be (faithfully represented) von Neumann algebras. If $v: H\rightarrow K$ is a unitary, and $\mathcal{M}\cong \mathcal{N}$ via the spatial isomorphism $\mathcal{M}= v^{*}\mathcal{N}v$ on $H$, then $U(\mathcal{M}) = v^{*}U(\mathcal{N})v$.
\end{lemma}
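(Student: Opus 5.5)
The plan is to prove the two inclusions $U(\mathcal{M}) \subseteq v^{*}U(\mathcal{N})v$ and $v^{*}U(\mathcal{N})v \subseteq U(\mathcal{M})$ directly. The only facts needed are that conjugation by a unitary, $\mathrm{Ad}_{v}: \mathcal{B}(K)\rightarrow \mathcal{B}(H)$ with $x \mapsto v^{*}xv$, is a unital $*$-isomorphism, and that by hypothesis it maps $\mathcal{N}$ onto $\mathcal{M}$.

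For the first inclusion, take $u \in U(\mathcal{M})$ and set $w := vuv^{*} \in \mathcal{B}(K)$. Since $\mathcal{M} = v^{*}\mathcal{N}v$, there is $n \in \mathcal{N}$ with $u = v^{*}nv$. Using $vv^{*} = I_{K}$, we get $w = vv^{*}nvv^{*} = n \in \mathcal{N}$. Next we check that $w$ is unitary:
\begin{gather*}
w^{*}w = vu^{*}v^{*}vuv^{*} = vu^{*}uv^{*} = vv^{*} = I_{K},
\end{gather*}
and similarly $ww^{*} = I_{K}$. Here we use $v^{*}v = I_{H}$ and $vv^{*} = I_{K}$. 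Hence $w \in U(\mathcal{N})$, and $u = v^{*}wv \in v^{*}U(\mathcal{N})v$.

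For the reverse inclusion, take $w \in U(\mathcal{N})$. Then $v^{*}wv \in v^{*}\mathcal{N}v = \mathcal{M}$, and
\begin{gather*}
(v^{*}wv)^{*}(v^{*}wv) = v^{*}w^{*}vv^{*}wv = v^{*}w^{*}wv = v^{*}v = I_{H},
\end{gather*}
with the other product handled symmetrically. So $v^{*}wv \in U(\mathcal{M})$.

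There is no real obstacle here. The only point needing care is that both identities $v^{*}v = I_{H}$ and $vv^{*} = I_{K}$ are used, which holds because $v$ is a unitary between possibly different Hilbert spaces $H$ and $K$, not merely an isometry. One can also remark that $\mathrm{Ad}_{v}$ is a homeomorphism for the strong operator topology, so the identification respects the topological group structures relevant to the earlier proposition on SOT-closedness of $U(\mathcal{M})$. That observation is not needed for the set equality itself.
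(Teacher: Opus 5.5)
Your proof is correct: the two inclusions, using both $v^{*}v = I_{H}$ and $vv^{*} = I_{K}$, are checked properly. The paper gives no proof, calling the lemma an easily verified fact, and your direct verification is exactly the routine argument it leaves to the reader.
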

The following are a summary of results from \cite{mvn}, the most important of which we use extensively.
\begin{theorem}[Murray-von Neumann]\label{th:aff_op_algebra}
For an arbitrary finite von Neumann algebra $(\mathcal{M}, \tau)$, the set $L^{0}(\mathcal{M}, \tau)$ forms a $*$-algebra of unbounded operators, where the algebraic operations are defined via:
\begin{gather*}
    (a, b) \mapsto \overline{a+b}, \;\;\;\; (\alpha, a) \mapsto \overline{\alpha a},
    \\ (a, b) \mapsto \overline{ab}, \;\;\;\; a \mapsto a^{*}.
\end{gather*}
\end{theorem}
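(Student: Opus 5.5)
The plan follows Murray and von Neumann. The crux is to show that the naive sum $a+b$ and product $ab$ of two elements of $L^{0}(\mathcal{M},\tau)$ are densely defined and closable, and that their closures are again affiliated with $\mathcal{M}$. Closure under adjoint and scalars is routine. If $u\in U(\mathcal{M}')$ then $ua=au$ gives $ua^{*}=a^{*}u$ after taking adjoints, and $a^{*}$ is closed and densely defined because $a$ is closed. Scalar multiples are immediate, except for $\alpha=0$, where $\overline{0\cdot a}=0\in\mathcal{M}$.

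The key tool is the notion of $\tau$-dense subspace recalled above. I would proceed in four steps.

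\textbf{Step 1.} Take $a\in L^{0}(\mathcal{M},\tau)$ with polar decomposition $a=v|a|$. The spectral projections $e_n=e([0,n])$ of $|a|$ lie in $\mathcal{M}$, and since $\tau$ is finite and normal, $\tau(e_n^{\perp})\to 0$. Hence ${\rm dom}(a)\supseteq e_nH$ for all $n$, so ${\rm dom}(a)$ is $\tau$-dense.

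\textbf{Step 2.} Intersections of two $\tau$-dense subspaces are $\tau$-dense. Given $p_\alpha H\subseteq K_1$ and $q_\beta H\subseteq K_2$, use $p_\alpha\wedge q_\beta$. The inequality $\tau(I-p\wedge q)\le\tau(p^\perp)+\tau(q^\perp)$ comes from the Kaplansky formula $p\vee q-q\sim p-p\wedge q$ and the finiteness of $\tau$. This shows that ${\rm dom}(a+b)$ is dense.

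\textbf{Step 3.} For the product, the preimage $b^{-1}(K)\cap{\rm dom}(b)$ of a $\tau$-dense subspace $K$ under $b$ is $\tau$-dense. Restrict $b$ to $e_nH$, where it is bounded, and pull back a projection $q$ with $qH\subseteq K$ via the kernel projection of $q^\perp b e_n\in\mathcal{M}$. Its trace defect is controlled by $\tau(q^\perp)$, because the left and right support projections of an element of $\mathcal{M}$ are equivalent. This gives density of ${\rm dom}(ab)$.

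\textbf{Step 4.} The same $\tau$-density arguments, applied to $a^{*}$ and $b^{*}$, show that $a^*+b^*\subseteq(a+b)^*$ and $b^*a^*\subseteq(ab)^*$ are densely defined. So $a+b$ and $ab$ are closable. Affiliation is clear: a unitary $u\in U(\mathcal{M}')$ preserves the domains and commutes with both operators, so it commutes with their closures by the preceding remark.

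\textbf{Step 5.} Finally, I would verify the $*$-algebra axioms, for example $\overline{\overline{ab}\,c}=\overline{a\,\overline{bc}}$, distributivity, and $(\overline{ab})^{*}=\overline{b^{*}a^{*}}$. For this I would use a uniqueness principle. If $s\subseteq t$ are closed operators in $L^{0}(\mathcal{M},\tau)$ and ${\rm dom}(s)$ is $\tau$-dense, then $s=t$. The reason is that a proper closed extension would force a nonzero projection in $\mathcal{M}$ orthogonal to a $\tau$-dense set, contradicting finiteness. Both sides of each identity extend a common operator on a $\tau$-dense domain by Steps 2 and 3, so they coincide.

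I expect Step 3 and this uniqueness principle to be the main obstacle, because they are where finiteness of $\tau$ is genuinely used, through the comparison of projections.
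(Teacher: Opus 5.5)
Your proposal is correct and is essentially the Murray--von Neumann argument the paper relies on. The paper gives no proof of its own beyond citing \cite{mvn}, and the $\tau$-density facts in your Steps 1--3 are exactly those recorded in Remark~\ref{rem:completely_dense}.

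The only loose point is the one-line justification of your uniqueness principle in Step 5. It is cleanest via graph projections. If $s\subseteq t$, then $P_s\le P_t$ in $M_2(\mathcal{M})$ (these projections lie there by affiliation), and each is equivalent to $I\oplus 0$, because the first-coordinate map is injective on the graph and has dense range. Finiteness of $M_2(\mathcal{M})$ then forces $P_s=P_t$, and this argument needs only that ${\rm dom}(s)$ be dense.
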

\begin{remark}\label{rem:completely_dense}
\rm If $a \; \eta \; \mathcal{M}$ when $\mathcal{M}$ is finite, ${\rm dom}(a)$ is a $\tau$-dense subspace in $H$; furthermore, if $(K_{i})_{i=1}^{\infty}$ is a sequence of $\tau$-dense subspaces in $H$ for $\mathcal{M}$, then $\cap_{i=1}^{\infty}K_{i}$ remains completely dense for $\mathcal{M}$. The proof of these statements in the case when $\mathcal{M}$ is a finite factor is given in \cite[Section 16]{mvn}; for the non-factor case, see \cite[Proposition 2.10]{am_one}. As a result, if we pick $a_{1}, \hdots, a_{N} \in L^{0}(\mathcal{M})$ by setting $D := \cap_{i=1}^{N}{\rm dom}(a_{i}) \subseteq H$ we have that $D$ is a $\tau$-dense subspace for $\mathcal{M}$ (and thus, is a dense subspace in $H$) on which $a_{1}, \hdots, a_{N}$ are all densely-defined. Furthermore, directly by definition any linear combination of $a_{1}, \hdots, a_{N}$ are densely defined on $D$. 
\end{remark}
Via \cite[Lemma 3.12]{am_one} we have the following result (which we paraphrase slightly).
\begin{proposition}
Let $(\mathcal{M}, \tau)$ be a countably decomposable finite von Neumann algebra acting on a Hilbert space $H$. Then $L^{0}(\mathcal{M}, \tau)$, endowed with the strong-resolvent topology (SRT), is a complete topological $*$-algebra.
\end{proposition}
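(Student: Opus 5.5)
Since $(\mathcal{M},\tau)$ is finite, every affiliated operator is $\tau$-measurable, so $L^{0}(\mathcal{M},\tau)=L(\mathcal{M},\tau)$. The plan is therefore to work with the measure topology, where the algebraic structure is already known, and transfer everything to the SRT at the end. By Theorem \ref{th:aff_op_algebra}, $L^{0}(\mathcal{M},\tau)$ is a $*$-algebra under strong sum, strong product and adjoint. By \cite{nelson_three}, $L(\mathcal{M},\tau)$ with the topology of convergence in measure is a complete, metrizable topological $*$-algebra. Completeness comes from the standard Cauchy-in-measure argument: extract a subsequence and build projections $p_{k}$ with $\tau(p_{k}^{\perp})\to 0$ on which the differences are norm-small, then glue the limit on the $\tau$-dense domain furnished by Remark \ref{rem:completely_dense}. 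I would cite this rather than reprove it. It then remains to show that the measure topology and the SRT coincide on $L^{0}(\mathcal{M},\tau)$; the identity map is then a homeomorphism, and the $*$-algebra structure, continuity of the operations, and completeness all transfer.

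\textbf{Step 1: reduce to a separable Hilbert space.} Countable decomposability gives a faithful normal state. Together with the finite faithful normal trace, this lets us regard $\mathcal{M}$ in standard form on $L^{2}(\mathcal{M},\tau)$. To reduce to a separable Hilbert space, I would argue that every statement about a countable family (a sequence) only involves the $\sigma$-finite, countably generated von Neumann subalgebra generated by the spectral data of those operators. Both topologies are first countable on bounded pieces, so sequences suffice. The two topologies are also compatible with spatial isomorphisms (Lemma \ref{l_spatial_isomorphisms_unitaries}) and with the unitary equivalence between the given representation and the standard one on the central support of $\xi_{\tau}$.

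\textbf{Step 2: equivalence of topologies.} Next I would invoke \cite{am_one}: on a separable Hilbert space, $a_{n}\to a$ in measure if and only if $a_{n}\to a$ in the SRT. Both topologies are metrizable, the measure topology directly and the SRT via $\sum_{k}2^{-k}\|(\cdot-i)^{-1}\xi_{k}-(\cdot-i)^{-1}\xi_{k}\|$ for a dense sequence $(\xi_{k})$ on the real and imaginary parts. Sequential equivalence therefore yields equality of the topologies. The operations $(a,b)\mapsto\overline{a+b}$, $\overline{ab}$, $a^{*}$ and scalar multiplication are then SRT-continuous because they are continuous in measure. Finally, SRT-Cauchy sequences are Cauchy in measure, so they converge by completeness of $L(\mathcal{M},\tau)$.

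\textbf{Main obstacle.} The delicate point is the case of a non-separable $H$. There the SRT is defined by nets, and metrizability of the SRT is not automatic, so the reduction in Step 1 has to be done honestly. I would try the following. Given an SRT-Cauchy net, the measure-topology uniformity is defined by countably many neighborhoods $V(\epsilon,\delta)$. This should let me pass to a Cauchy sequence in measure, whose limit lives in a separable reduction. I would then check SRT convergence of the full net vector-by-vector, using that each $\xi\in H$ lies in a separable $\mathcal{M}$-invariant subspace $\overline{\mathcal{M}'\mathcal{M}\xi}$ whose projection lies in $\mathcal{M}'$. I expect this localization argument to be the only nontrivial step beyond citing \cite{mvn}, \cite{nelson_three} and \cite{am_one}.
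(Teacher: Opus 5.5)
For separable $H$ your outline works, and it is essentially the approach the paper relies on. The paper gives no argument of its own: it cites \cite[Lemma 3.12]{am_one}, whose content is that the SRT coincides with the measure topology, so that Nelson's continuity and completeness carry over. The problem is the general case, which you correctly single out but do not close.

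Step 2 needs the SRT to be first countable in order to turn sequential equivalence into equality of topologies. Your metric uses a dense sequence in $H$. Step 1 asserts first countability ``on bounded pieces'' without justification, and your last paragraph concedes it is not known. The localization you propose does not repair this:
\begin{itemize}
\item $\overline{\mathcal{M}'\mathcal{M}\xi}$ is in general far from separable. For $\mathcal{M}=\mathbb{C}I$ on a non-separable $H$ one has $\mathcal{M}'=\mathcal{B}(H)$, so this subspace is all of $H$. If $\mathcal{M}_{\ast}$ is non-separable, even $\overline{\mathcal{M}\xi}$ can be non-separable.
\item A net of affiliated operators need not lie in any countably generated subalgebra.
\item Speaking of an ``SRT-Cauchy net'' already presupposes that the SRT is a vector topology with the same additive uniformity as the measure topology, which is what you are trying to prove.
\end{itemize}
Likewise, the passage to standard form cannot go through a unitary equivalence. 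A faithful normal representation is in general only equivalent to a reduced amplification of $L^{2}(\mathcal{M},\tau)$; already $\mathbb{C}$ acting on $\mathbb{C}^{2}$ is not equivalent to its standard form.

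The missing observation is that the trace makes the separability issue disappear. Since $\tau$ is a faithful normal tracial state, on bounded subsets of $\mathcal{M}$ the strong operator topology of any faithful normal representation coincides with the topology of $\|x\|_{2}=\tau(x^{*}x)^{1/2}$. The reason is that on bounded sets the strong topology equals the $\sigma$-strong topology, which is intrinsic, and in standard form $\xi_{\tau}$ is cyclic for $\mathcal{M}'$. All resolvents $({\rm Re}(a)-i)^{-1}$ and $({\rm Im}(a)-i)^{-1}$ are contractions in $\mathcal{M}$, so
\begin{gather*}
d(a,b)=\|({\rm Re}(a)-i)^{-1}-({\rm Re}(b)-i)^{-1}\|_{2}+\|({\rm Im}(a)-i)^{-1}-({\rm Im}(b)-i)^{-1}\|_{2}
\end{gather*}
metrizes the SRT on $L^{0}(\mathcal{M},\tau)$ for every $H$. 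The same formula shows that the SRT does not depend on the representation, so sequences suffice.

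Given $a_{n}\to a$, let $\mathcal{N}$ be the von Neumann algebra generated by the resolvents of the real and imaginary parts of the $a_{n}$ and of $a$. Then:
\begin{itemize}
\item $\mathcal{N}$ is countably generated, so $L^{2}(\mathcal{N},\tau|_{\mathcal{N}})$ is separable.
\item Convergence in measure is unchanged, because the spectral projections of $|\overline{a_{n}-a}|$ lie in $\mathcal{N}$.
\item SRT convergence is unchanged, by the $\|\cdot\|_{2}$ description.
\end{itemize}
Applying the separable result of \cite{am_one} to $\mathcal{N}$ in standard form, your Step 2 goes through verbatim. Completeness and continuity of the operations then transfer, because two coinciding group topologies have the same additive uniformity.
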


\begin{remark}\label{rem:aff_op_bimodule_property}
\rm If $\mathcal{M}$ is finite and acts on $H$, $L^{0}(\mathcal{M})$ is a bimodule over $\mathcal{M}$. This follows immediately from Theorem \ref{th:aff_op_algebra} and the observation that $\mathcal{M}\subseteq L^{0}(\mathcal{M})$. The latter fact holds as every $a \in \mathcal{M}$ is bounded and hence closed and densely-defined, and commutes with all $u \in U(\mathcal{M}')$ by definition of the commutant. 
\end{remark}

As explained in \cite[Section 5.3]{pedersen}, there exists a Borel measure $\mu$ on $\sigma(a)$ such that we may extend the Borel functional calculus to unbounded operators on $H$; that is, the mapping $f \mapsto f(a)$ is an essential $*$-homomorphism from $\cl{B}(\sigma(a))/\cl{N}(\sigma(a))$ into a $*$-algebra of normal operators affiliated to $W^{*}(a)$.  

\begin{lemma}\label{lemma:closure_under_borel}
Let $a$ be a closed, densely-defined operator on $H$ affiliated to a von Neumann algebra $\mathcal{M}\subseteq \cl{B}(H)$. If $a$ is skew or self-adjoint, then $f(a) \; \eta \; \mathcal{M}$ for any $f \in \cl{B}(\sigma(a))$.  
\end{lemma}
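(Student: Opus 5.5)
The plan is to reduce the statement to the fact that the spectral projections of $a$ lie in $\mathcal{M}$, and then to use that $f(a)$ is built out of those projections. Suppose first that $a$ is self-adjoint, and let $e(\cdot)$ be its spectral measure on $\sigma(a)$. Fix $u \in U(\mathcal{M}')$. Affiliation gives $ua = au$, meaning $u({\rm dom}(a)) = {\rm dom}(a)$ and $uau^{*} = a$. The operator $uau^{*}$ is self-adjoint, and its spectral measure is $u e(\cdot) u^{*}$; this is checked by verifying that $\Delta \mapsto ue(\Delta)u^{*}$ is a projection-valued measure with
\begin{gather*}
    \langle uau^{*}\xi, \eta\rangle = \int \lambda \, d\langle ue(\lambda)u^{*}\xi, \eta\rangle .
\end{gather*}
Uniqueness in the spectral theorem then forces $ue(\Delta)u^{*} = e(\Delta)$ for every Borel $\Delta \subseteq \sigma(a)$. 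So each $e(\Delta)$ commutes with $U(\mathcal{M}')$. Since $\mathcal{M}'$ is the span of its unitaries, each $e(\Delta)$ lies in $\mathcal{M}'' = \mathcal{M}$ by the Bicommutant Theorem.

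Next we pass to general $f \in \cl{B}(\sigma(a))$. Put $f(a) = \int f(\lambda)\, de(\lambda)$, with domain
\begin{gather*}
    {\rm dom}(f(a)) = \Big\{\xi : \int |f|^{2} \, d\langle e(\lambda)\xi, \xi\rangle < \infty\Big\}.
\end{gather*}
Because $\langle e(\cdot)u\xi, u\xi\rangle = \langle e(\cdot)\xi, \xi\rangle$, the unitary $u$ preserves ${\rm dom}(f(a))$. To get $uf(a) = f(a)u$, truncate: set $f_{n} = f\chi_{\{|f|\le n\}}$. Each $f_{n}(a)$ is bounded and is a norm limit of finite linear combinations of spectral projections $e(\Delta)$, so $f_{n}(a) \in \mathcal{M}$ and $u f_{n}(a) = f_{n}(a) u$. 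For $\xi \in {\rm dom}(f(a))$ we have $f_{n}(a)\xi \to f(a)\xi$ by dominated convergence. Applying the same convergence to $u\xi$ gives $uf(a)\xi = f(a)u\xi$. The operator $f(a)$ is closed, normal and densely defined, so $f(a) \; \eta \; \mathcal{M}$.

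The skew-adjoint case reduces to the one above. If $a$ is skew-adjoint, then $b := ia$, with ${\rm dom}(b) = {\rm dom}(a)$, is self-adjoint and still affiliated with $\mathcal{M}$. Moreover $\sigma(a) = -i\sigma(b)$, so $f(a) = g(b)$ for the Borel function $g(\lambda) = f(-i\lambda)$. (Alternatively, one can run the argument directly with the spectral measure of the normal operator $a$.)

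The main obstacle is the uniqueness step $ue(\cdot)u^{*} = e(\cdot)$, because it must be argued for unbounded $a$ with some care about domains. The cleanest route I would try first avoids unbounded operators entirely. From $ua = au$ one gets that $u$ commutes with the bounded resolvent $(a-i)^{-1}$, hence with the Cayley transform $(a-i)(a+i)^{-1}$, which is unitary. By the bounded functional calculus, $u$ then commutes with every Borel function of that unitary. The spectral projections of $a$ are Borel functions of the Cayley transform, which gives $e(\Delta) \in \mathcal{M}$ directly.
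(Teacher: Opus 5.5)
Your proposal is correct and is essentially the paper's argument. The paper notes that $f(a)\;\eta\;W^{*}(a)$ and that $W^{*}(a)\subseteq\mathcal{M}$ (since $W^{*}(a)$ is the smallest von Neumann algebra with which $a$ is affiliated), so every unitary in $\mathcal{M}'\subseteq W^{*}(a)'$ commutes with $f(a)$. Your step showing $e(\Delta)\in\mathcal{M}$, via spectral-measure uniqueness or the Cayley transform, proves exactly that inclusion, and your truncation $f_{n}=f\chi_{\{|f|\le n\}}$ with the domain check proves the affiliation of unbounded $f(a)$ that the paper takes from the literature; your version is a more self-contained rendering of the same route.
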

\begin{proof}
Whenever $f \in \cl{B}(\sigma(a))$ then $f(a)$ is affiliated with $W^{*}(a)$ via previous discussion (see also \cite[Theorem 1.4]{bl}).  As $a$ is skew or self-adjoint, then $W^{*}(a) = \{a\}''$. Furthermore, it has the property of being the smallest von Neumann algebra such that $a$ is affiliated with it; thus, $W^{*}(a) \subseteq \mathcal{M}$. In $\cl{B}(H)$, we then have $\mathcal{M}'\subseteq W^{*}(a)'$; as $uf(a)u^{*} = f(a)$ for any unitary $u \in W^{*}(a)'$, and as $\mathcal{M}' \subseteq W^{*}(a)'$, then $uf(a)u^{*} = f(a)$ for any unitary $u \in \mathcal{M}'$ as well. Thus, $f(a) \; \eta \; \mathcal{M}$, as claimed.  
\end{proof}

Having developed the necessary prerequisites, we connect Lie theory to our discussion of von Neumann algebras. The following lemma (in different forms) appears in \cite{kato, nelson_two}.
\begin{lemma}\label{l_tk_nelson}
Let $a, b$ be skew-adjoint operators on a Hilbert space $H$. 
\begin{itemize}
    \item[(i)] If $a+b$ is essentially skew-adjoint on ${\rm dom}(a)\cap {\rm dom}(b)$, then it holds that
    \begin{gather*}
        e^{t\overline{(a+b)}} = s-\lim\limits_{n\rightarrow \infty}(e^{ta/n}e^{tb/n})^{n},
    \end{gather*}
    \noindent for all $t \in \bb{R}$.
    \item[(ii)] If $(ab-ba)$ is essentially skew-adjoint on 
    \begin{gather*}
    {\rm dom}(a^{2}) \cap {\rm dom}(ab) \cap {\rm dom}(ba) \cap {\rm dom}(b^{2}),
    \end{gather*}
    \noindent then it holds that
    \begin{gather*}
        e^{t[a, b]} = s-\lim\limits_{n\rightarrow \infty}\bigg(e^{-\sqrt{t/n}a}e^{-\sqrt{t/n}b}e^{\sqrt{t/n}a}e^{\sqrt{t/n}b}\bigg)^{n^{2}},
    \end{gather*}
    \noindent for $t > 0$ where $[a, b] := \overline{ab-ba}$. 
\end{itemize}
\end{lemma}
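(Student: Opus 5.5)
Lemma \ref{l_tk_nelson} is the Trotter product formula together with its commutator analogue, so I would not derive it from the von Neumann algebraic material above. The plan is to reduce both parts to Chernoff's product formula: if $F(s)$ is a family of contractions with $F(0)=I$, $s \mapsto F(s)$ strongly continuous, and the strong derivative $F'(0)\xi$ exists and equals $c\xi$ for $\xi$ in a core of a generator $\overline{c}$ of a contraction semigroup, then $F(t/n)^{n} \to e^{t\overline{c}}$ strongly.

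\emph{Part (i).} Take $F(s) = e^{sa}e^{sb}$, which is unitary because $a$ and $b$ are skew-adjoint. For $\xi \in {\rm dom}(a)\cap{\rm dom}(b)$,
\begin{gather*}
\frac{F(s)\xi-\xi}{s} = e^{sa}\frac{e^{sb}\xi-\xi}{s} + \frac{e^{sa}\xi-\xi}{s}.
\end{gather*}
The first term tends to $b\xi$ by uniform boundedness of $e^{sa}$ together with strong continuity; the second tends to $a\xi$. By hypothesis $a+b$ is essentially skew-adjoint on this domain, so the domain is a core for $\overline{a+b}$, which generates a unitary group by Stone's theorem. Chernoff's theorem then gives the result for $t\ge 0$. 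For $t<0$ I would apply the same argument to $-a,-b$.

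\emph{Part (ii).} Set $s=\sqrt{t/n}$ and $G(s)=e^{-sa}e^{-sb}e^{sa}e^{sb}$, so that the product in the statement is $G(\sqrt{t/n})^{n^2}$. Reparametrize with $F(r) := G(\sqrt{r})$ for $r \ge 0$; this is a unitary family, and $F(t/n^2)^{n^2}$ is exactly the expression to be controlled. The key estimate is $(F(r)\xi-\xi)/r \to (ab-ba)\xi$ for $\xi$ in $D={\rm dom}(a^2)\cap{\rm dom}(ab)\cap{\rm dom}(ba)\cap{\rm dom}(b^2)$. To get it, expand each factor on $D$ to second order using
\begin{gather*}
e^{sa}\eta=\eta+sa\eta+\int_0^s(s-u)e^{ua}a^2\eta\,du ,
\end{gather*}
valid for $\eta\in{\rm dom}(a^2)$, and similarly for $b$. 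Apply these expansions successively to the vectors $e^{sb}\xi$, $e^{sa}e^{sb}\xi$, and so on.

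The bookkeeping here is the main obstacle. The intermediate vectors need not stay in $D$, so I would instead write $G(s)\xi-\xi$ as a telescoping sum,
\begin{gather*}
e^{-sa}e^{-sb}\big[(e^{sa}e^{sb}-e^{sb}e^{sa})\xi\big],
\end{gather*}
and expand $(e^{sa}e^{sb}-e^{sb}e^{sa})\xi$ directly on $D$. In that expansion:
\begin{itemize}
\item the first-order terms cancel;
\item the second-order term is $s^2(ab-ba)\xi$;
\item the remainders are $o(s^2)$, by strong continuity of the groups applied to the fixed vectors $a^2\xi$, $ab\xi$, $ba\xi$, $b^2\xi$.
\end{itemize}
The prefactor $e^{-sa}e^{-sb}$ tends strongly to $I$ and is uniformly bounded, which yields the derivative. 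Essential skew-adjointness of $ab-ba$ on $D$ makes $D$ a core for the skew-adjoint generator $[a,b]$. Chernoff's formula along the sequence $r=t/n^2$, which is legitimate since the theorem is stated for $F(t/m)^m$ with $m=n^2$, then finishes the argument for $t>0$.
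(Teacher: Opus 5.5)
\textbf{Where the proposal breaks.} The error is the sentence ``$F(t/n^{2})^{n^{2}}$ is exactly the expression to be controlled.'' Since $F(r)=G(\sqrt{r})$, you have $F(t/n^{2})=G(\sqrt{t}/n)$. The statement instead involves $G(\sqrt{t/n})^{n^{2}}=F(t/n)^{n^{2}}$, which is $F(\tau/m)^{m}$ with $m=n^{2}$ and $\tau=nt\to\infty$. Chernoff's theorem is uniform only for $\tau$ in compact sets, so it says nothing about this expression.

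In fact the formula as printed is false. On $L^{2}(\mathbb{R})$ take $a$ to be multiplication by $ix$ and $b=d/dx$, so $(e^{sa}f)(x)=e^{isx}f(x)$ and $(e^{sb}f)(x)=f(x+s)$. Then $ab-ba=-iI$ on $D$, which contains the Schwartz space, so the hypothesis holds with $[a,b]=-iI$. A direct computation gives $G(s)=e^{-is^{2}}I$. Hence $G(\sqrt{t/n})^{n^{2}}=e^{-int}I$, which for $t=\pi$ equals $(-1)^{n}I$ and does not converge. By contrast, $G(\sqrt{t}/n)^{n^{2}}=G(\sqrt{t/n})^{n}=e^{-it}I=e^{t[a,b]}$.

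So the statement must be corrected to one of two forms: scaling $\sqrt{t}/n$ with exponent $n^{2}$, or scaling $\sqrt{t/n}$ with exponent $n$. Your Chernoff argument proves both, taking $m=n^{2}$ or $m=n$ respectively. You should flag this correction explicitly rather than assert that the printed expression equals $F(t/n^{2})^{n^{2}}$.

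\textbf{What is sound.} The paper gives no proof of Lemma~\ref{l_tk_nelson}; it only cites Kato and Nelson, so your reduction to Chernoff's product formula supplies a self-contained argument. Part (i) is correct as written: it is essentially Chernoff's own proof of the Trotter formula under essential skew-adjointness, with $t<0$ handled by replacing $a,b$ with $-a,-b$.

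In part (ii), your second-order expansion via $G(s)\xi-\xi=e^{-sa}e^{-sb}(e^{sa}e^{sb}-e^{sb}e^{sa})\xi$ is also sound and gives $(F(r)\xi-\xi)/r\to(ab-ba)\xi$ on $D$. One small point to add: the statement's $[a,b]$ is the closure of $ab-ba$ on ${\rm dom}(ab)\cap{\rm dom}(ba)$. This agrees with the closure of the restriction to $D$ because $ab-ba$ is skew-symmetric on that larger domain, so $D$ really is a core for $[a,b]$.
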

\noindent Following the notation introduced in \cite{am_one}, for a finite von Neumann algebra $\mathcal{M}$ on $H$ if $G \subseteq U(\mathcal{M})$ is a strongly closed subgroup, the set
\begin{gather*}
    \frak{g} = {\rm Lie}(G) = \{a: a^{*} = -a \; {\rm on} \; H, e^{ta} \in G \; {\rm for \; all \;} t \in \bb{R}\}
\end{gather*}
\noindent is called the \textit{Lie algebra of $G$}; this is a well-defined Lie algebra, considered inside $L^{0}(\mathcal{M}, \tau)$ (see \cite[Lemma 4.3]{am_one}). The complexification $\frak{g}_{\bb{C}}$ is given by
\begin{gather*}
    \frak{g}_{\bb{C}} := \{\overline{a+ib}: \; a, b \in \frak{g}\}.
\end{gather*}
\noindent If $G = U(\mathcal{M})$, we denote $\frak{g} = \frak{u}(\mathcal{M})$. For $a_{1}, \hdots, a_{k} \in \frak{g}$, we let $\langle a_{1}, \hdots, a_{k}\rangle_{{\rm Lie}, \bb{R}} \subseteq \frak{g}$ denote the smallest linear subspace of $\frak{g}$ which also contains all iterated brackets $[a_{i}, a_{j}], [a_{i}, [a_{j}, a_{\ell}]]$, etc. for $i, j, \ell \in [k]$; that is, the Lie subalgebra generated by $a_{1}, \hdots, a_{k}$.


\section{Admissible controls for quantum systems}\label{sec:adm_controls}
In Sections \ref{sec:adm_controls} and \ref{sec:qcs_geom_control}, we consider infinite-dimensional, closed quantum systems evolving according to a Hamiltonian $H(t)$ which is a solution to the time-dependent Schrödinger equation; assume further that $H(t)$ is of the form $H(t) = V_{0}+V(t)$, where $V_{0}$ is the time-independent Hamiltonian of the system when isolated, and $V(t)$ describes the interaction of the system with a control. Recall (see Section \ref{ss:intro_qcs}) that if $H$ denotes the separable, infinite-dimensional Hilbert space for the system and $\xi = \xi(t) \in H$ is the time-dependent vector state , then $\xi$ satisfies
\begin{gather}\label{eqn:schrodinger}
    i\hbar\frac{d}{dt}\xi(t) = (V_{0}+V(t))\xi(t), \;\;\;\; \xi(0) = \xi_{0}.
\end{gather}
While Hamiltonians are often denoted in the literature by $H$ or $\hat{H}$ (potentially with some subscript), we break with tradition in the hopes of avoiding confusion when dealing with operators on Hilbert space $H$. Here and in the sequel, we fix a finite von Neumann algebra $(\mathcal{M}, \tau)$ acting on $H$. By separability of $H$, the predual $\mathcal{M}_{\ast}$ of $\mathcal{M}$ is separable as well. For $t \geq 0$, we let $U^{t} := e^{-i/\hbar t V_{0}}$ denote the unitary on $H$ generated by $V_{0}$. Additionally, we fix $T > 0$ and $N > 0$. We endow $[0, T]$ with the (normalized) Lebesgue measure $\lambda$, making $([0, T], \Sigma_{\lambda}, \lambda)$ into a finite measure space. For the following definitions, we follow conventions similar to those in \cite{hls}. 

\begin{definition}
A {\rm control function on} $[0, T]$ is a self-adjoint operator $V(t)$ defined over all $0 \leq t \leq T$ acting on $H$ such that $V(t) \in L^{0}(\mathcal{M}, \tau)$ for all $t \in [0, T]$.
\end{definition}
\begin{definition}
A control function is {\rm admissible with bound $N$} if $V(t) \in \mathcal{M}$ for $t \in [0, T]$, and moreover the mapping $t \mapsto V(t)$ is:
\begin{itemize}
    \item[(i)] measureable;
    \item[(ii)] strongly continuous on $H$;
    \item[(iii)] satisfies $\|V(t)\| \leq N$, for $t \in [0, T]$.
\end{itemize}
\noindent We denote by $\mathcal{E}^{N}[0, T]$ the set of admissible control functions over $[0, T]$ with cutoff $N$.
\end{definition}

\begin{remark}
\rm Under the previous assumptions, it is clear that a sufficient condition for $V(t)$ to be admissible with bound $N$ is if the mapping $t \mapsto V(t)$ is operator norm continuous over $[0, T]$. 
\end{remark}

\noindent Note that $\mathcal{E}^{N}[0, T]$ is contained inside the Banach space $L_{\sigma}^{\infty}([0, T], \mathcal{M})$. By separability of the predual $\mathcal{M}_{\ast} \cong L^{1}(\mathcal{M}, \tau)$, we have that $\mathcal{M}$ is separable by Lemma \ref{lem:sep_of_lp}. Additionally, one has
\begin{gather*}
    L_{\sigma}^{\infty}([0, T], \mathcal{M}) \cong L^{1}([0, T], L^{1}(\mathcal{M}, \tau))^{*} \cong (L^{1}([0, T], \lambda)\hat{\otimes}L^{1}(\mathcal{M}, \tau))^{*},
\end{gather*}
\noindent where $\hat{\otimes}$ denotes the projective tensor product of Banach spaces. Thus, we endow $\mathcal{E}^{N}[0, T]$ with a ${\rm wk}^{*}$-topology, inherited from $L_{\sigma}^{\infty}([0, T], \mathcal{M})$. Under the previous assumptions on $H(t)$, and with the further assumption we work with the inhomogeneous Schrödinger equation (which can be viewed as a first-order Born approximation) of the form
\begin{gather}\label{eqn:approx_schrodinger}
    i\hbar\frac{d}{dt}\xi(t) = V_{0}\xi(t)+V(t)U^{t}\xi_{0}, \;\;\;\; \xi_{0} \in H,
\end{gather}
\noindent if $\xi(t)$ satisfies (\ref{eqn:approx_schrodinger}) standard results (see \cite[Chapter Nine $\S$4]{kato}) show that the explicit form is
\begin{gather}\label{eqn_solution_for_vector}
    \xi(t) = U^{t}\xi_{0}+\int\limits_{0}^{t}U^{t-s}V(s)U^{s}\xi_{0}ds, \;\;\;\; t \geq 0.
\end{gather}
Conversely, if the mapping $t \mapsto V(t)U^{t}\xi_{0}$ is strongly differentiable with strongly-continuous derivative (and $V \in \mathcal{E}^{N}[0, T])$, then (\ref{eqn:schrodinger}) has a unique solution $\xi(t) \in H$ for each $t \geq 0$. We specifically note that as $V_{0} \in L^{0}(\cl{M}, \tau)$, then $U^{t} \in \mathcal{M}$ for all $t \in [0, T]$.

For $V = V(t) \in \mathcal{E}^{N}[0, T]$, the vector state at time $T$ is the corresponding value $\xi(T) \in H$ under the dynamics governed by $V_{0}$ and $V(t)$. Let $\Psi: \mathcal{E}^{N}[0, T]\rightarrow H$ be the map sending $V \in \mathcal{E}^{N}[0, T]$ to the integral term in $\xi(T)$ of $H$ as given in (\ref{eqn_solution_for_vector}).
\begin{definition}
The {\rm reachable set at time $T$}, for parameter $N$, is denoted by $\mathcal{R}^{N}(T)$ and is given by $\mathcal{R}^{N}(T) = \Psi(\mathcal{E}^{N}[0, T])$. We say a vector state $\xi \in H$ is {\rm reachable at time $T$}, for some parameter $N$ if there exists some $\xi_{T} \in \mathcal{R}^{N}(T)$ such that $\xi = U^{T}\xi_{0}+\xi_{T}$.
\end{definition}
\begin{proposition}\label{prop:properties_of_reachable_set}
For each $N >0$, the set $\mathcal{R}^{N}(T)$ is weakly compact and convex. Furthermore, if $T_{1} \leq T_{2}$, then $\mathcal{R}^{N}(T_{1}) \subseteq \mathcal{R}^{N}(T_{2})$. 
\end{proposition}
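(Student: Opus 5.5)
The plan is to show that $\Psi$ is a linear map, continuous from the ${\rm wk}^{*}$-topology on $\mathcal{E}^{N}[0,T]$ to the weak topology on $H$. Convexity and weak compactness of $\mathcal{R}^{N}(T)$ then follow from the corresponding properties of the set of controls. Monotonicity in $T$ will be handled separately by extending controls from $[0,T_{1}]$ to $[0,T_{2}]$.

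\emph{Convexity.} For $V_{1},V_{2}\in\mathcal{E}^{N}[0,T]$ and $\lambda\in[0,1]$, the combination $\lambda V_{1}+(1-\lambda)V_{2}$ is again self-adjoint and lies in $\mathcal{M}$ for each $t$. It is measurable and strongly continuous, and it satisfies $\|\lambda V_{1}(t)+(1-\lambda)V_{2}(t)\|\leq N$. So $\mathcal{E}^{N}[0,T]$ is convex. Since $\Psi(V)=\int_{0}^{T}U^{T-s}V(s)U^{s}\xi_{0}\,ds$ is linear in $V$, its image $\mathcal{R}^{N}(T)$ is convex.

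\emph{Continuity of $\Psi$.} Fix $\eta\in H$ and write
\begin{gather*}
\langle \Psi(V),\eta\rangle=\int_{0}^{T}\langle V(s)U^{s}\xi_{0},\,U^{s-T}\eta\rangle\,ds .
\end{gather*}
For each $s$, the vector functional $\omega_{s}(x)=\langle xU^{s}\xi_{0},U^{s-T}\eta\rangle$ restricted to $\mathcal{M}$ is normal, with $\|\omega_{s}\|\leq\|\xi_{0}\|\|\eta\|$. Via $\mathcal{M}_{\ast}\cong L^{1}(\mathcal{M},\tau)$ it corresponds to some $F(s)\in L^{1}(\mathcal{M},\tau)$. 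Because $s\mapsto U^{s}$ is strongly continuous, $s\mapsto\omega_{s}$ is norm continuous in $\mathcal{M}_{\ast}$. Hence $F\in L^{1}([0,T],L^{1}(\mathcal{M},\tau))$, and by (\ref{eqn_dp_iso}) we get $\langle\Psi(V),\eta\rangle=\langle V,F\rangle$. This shows $V\mapsto\langle\Psi(V),\eta\rangle$ is ${\rm wk}^{*}$-continuous for every $\eta$.

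\emph{Compactness.} $\mathcal{E}^{N}[0,T]$ sits inside the ball of radius $N$ of $L^{\infty}_{\sigma}([0,T],\mathcal{M})\cong L^{1}([0,T],L^{1}(\mathcal{M},\tau))^{*}$, and that ball is ${\rm wk}^{*}$-compact by Banach--Alaoglu. The ball is also metrizable, by separability of the predual (Lemma \ref{lem:sep_of_lp}). If $\mathcal{E}^{N}[0,T]$ is ${\rm wk}^{*}$-closed in the ball, it is compact, and its continuous image $\mathcal{R}^{N}(T)$ is weakly compact.

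I expect this closedness to be the main obstacle. Self-adjointness and the bound $N$ survive ${\rm wk}^{*}$-limits, but strong continuity in $t$ does not. My first approach is to work with the ${\rm wk}^{*}$-closure of $\mathcal{E}^{N}[0,T]$, namely the measurable self-adjoint $\mathcal{M}$-valued functions bounded by $N$. I would then show that $\Psi$ takes the same values on this closure, or at least that its image is the weak closure of $\mathcal{R}^{N}(T)$. To do this, I would approximate a bounded measurable control by strongly continuous ones, e.g.\ by mollifying in $t$. Mollification preserves the bound $N$ and self-adjointness, and converges ${\rm wk}^{*}$, so the image of the closure lies in the weak closure of $\mathcal{R}^{N}(T)$. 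Convexity plus norm-boundedness then gives weak compactness of that closure. If needed, I would interpret the statement with this closure as the admissible class.

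\emph{Monotonicity.} Given $V\in\mathcal{E}^{N}[0,T_{1}]$ with $\delta=T_{2}-T_{1}$, I would extend it to $[0,T_{2}]$, either by a time shift or by padding with $0$ after $T_{1}$; both extensions are admissible. Zero-padding gives $\Psi_{T_{2}}(V')=U^{\delta}\Psi_{T_{1}}(V)$, so the comparison picks up the free evolution $U^{\delta}$. Removing this factor is a second point needing care. I would try a time shift, setting $V'(s)=0$ for $s<\delta$ and $V'(s)=V(s-\delta)$ afterwards, compensated by conjugating with the unitaries $U^{r}\in\mathcal{M}$. This keeps $V'$ self-adjoint, admissible, and bounded by $N$. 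I would then check, by substituting $s=r+\delta$, that the resulting integral reproduces $\Psi_{T_{1}}(V)$ modulo the convention that reachable vectors are compared after the free evolution.
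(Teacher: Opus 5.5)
\textbf{Where the proposal stands.} Your skeleton is the paper's: convexity from linearity of $\Psi$, ${\rm wk}^{*}$-continuity of $\Psi$ via the normal functionals $a\mapsto\langle aU^{s}\xi_{0},U^{s-T}\eta\rangle$ and $\mathcal{M}_{\ast}\cong L^{1}(\mathcal{M},\tau)$, Banach--Alaoglu, and zero-padding. You are right to distrust the two points the paper passes over. Its identification $\mathcal{E}^{N}[0,T]=N\cdot(L^{\infty}_{\sigma}([0,T],\mathcal{M}))_{1}$ is false, and padding by $0$ produces $U^{T_{2}-T_{1}}\Psi_{T_{1}}(V)$, not $\Psi_{T_{1}}(V)$. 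However, neither of your repairs closes the gap.

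\textbf{Compactness.} Your fallback only proves that the weak closure of $\mathcal{R}^{N}(T)$ is weakly compact, which holds for any bounded subset of $H$. Since $\mathcal{R}^{N}(T)$ is bounded and convex, the whole content of the claim is that $\mathcal{R}^{N}(T)$ is norm closed. ``Reinterpreting the admissible class'' changes the statement. This is not a formality: for a bounded scalar control of a harmonic oscillator, continuous controls miss the bang--bang boundary points. The missing idea is that the admissible set is invariant under conjugation by the unitaries $U^{t}\in\mathcal{M}$. For admissible $V$, set
\begin{gather*}
W:=\frac{1}{T}\int_{0}^{T}U^{T-s}V(s)U^{s-T}\,ds ,
\end{gather*}
a $\sigma$-weak integral in $\mathcal{M}$. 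Then $W=W^{*}$, $\|W\|\le N$, and $\Psi(V)=TWU^{T}\xi_{0}$, because $U^{s}=U^{s-T}U^{T}$. The same averaging works for any bounded measurable self-adjoint-valued $V$, so it also identifies $\Psi$ on the ${\rm wk}^{*}$-closure. Conversely, $\tilde V(s):=U^{s-T}WU^{T-s}$ is strongly continuous, self-adjoint, $\mathcal{M}$-valued and bounded by $N$, and $\Psi(\tilde V)=TWU^{T}\xi_{0}$. Hence
\begin{gather*}
\mathcal{R}^{N}(T)=\{TWU^{T}\xi_{0}:\ W=W^{*}\in\mathcal{M},\ \|W\|\le N\}.
\end{gather*}
This set is weakly compact, being the image of the $\sigma$-weakly compact self-adjoint part of the $N$-ball of $\mathcal{M}$ under the $\sigma$-weak-to-weak continuous map $W\mapsto TWU^{T}\xi_{0}$. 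This bypasses both the Bochner duality and your mollification.

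\textbf{Monotonicity.} The same formula shows your time-shift plan cannot succeed. Every $\zeta\in\mathcal{R}^{N}(T_{2})$ satisfies $\langle\zeta,U^{T_{2}}\xi_{0}\rangle\in\mathbb{R}$, but $\mathcal{R}^{N}(T_{1})$ can contain vectors violating this. Take $\hbar=1$, $\mathcal{M}=M_{2}(\mathbb{C})$ (amplified to an infinite-dimensional $H$ if you wish), $V_{0}=\mathrm{diag}(0,\omega)$ and $\xi_{0}=(e_{1}+e_{2})/\sqrt{2}$. Let $\zeta=\epsilon U^{T_{1}}\xi_{0}$, which lies in $\mathcal{R}^{N}(T_{1})$ via $W=\frac{\epsilon}{T_{1}}I$ with $\epsilon\le NT_{1}$. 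Then $\langle\zeta,U^{T_{2}}\xi_{0}\rangle=\frac{\epsilon}{2}\bigl(1+e^{i\omega(T_{2}-T_{1})}\bigr)$, which is not real unless $\omega(T_{2}-T_{1})\in\pi\mathbb{Z}$.

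So, with $\Psi$ defined as the integral term of $\xi(T)$, the literal inclusion $\mathcal{R}^{N}(T_{1})\subseteq\mathcal{R}^{N}(T_{2})$ is false, and no choice of $V'$ rescues it. Two correct versions are available, and you must commit to one rather than argue ``modulo the convention'':
\begin{itemize}
\item[(i)] $U^{T_{2}-T_{1}}\mathcal{R}^{N}(T_{1})\subseteq\mathcal{R}^{N}(T_{2})$, obtained by conjugating $W$ by $U^{T_{2}-T_{1}}$ and rescaling by $T_{1}/T_{2}$.
\item[(ii)] The literal inclusion for the interaction-picture map $V\mapsto\int_{0}^{T}U^{-s}V(s)U^{s}\xi_{0}\,ds$, which the paper's own computation actually uses. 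Its range is $\{W\xi_{0}:\ W=W^{*}\in\mathcal{M},\ \|W\|\le NT\}$, clearly increasing in $T$.
\end{itemize}
Finally, your padded and shifted controls are not strongly continuous at the junction unless $V$ vanishes there, so ``both extensions are admissible'' is wrong as stated; the paper makes the same slip. The conjugated constant control $\tilde V$ above is the admissible replacement.
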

\begin{proof}
Convexity is easily seen: assume $V, V' \in \mathcal{E}^{N}[0, T]$ and pick $0 \leq \theta \leq 1$. Then
\begin{gather*}
    \|\theta V(t)+(1-\theta)V'(t)\| \leq \theta\|V(t)\|+(1-\theta)\|V'(t)\| \leq N,
\end{gather*}
\noindent for all $0 \leq t \leq T$, showing $\mathcal{E}^{N}[0, T]$ is convex. By convex-linearity of $\Psi$ (where we consider convex combinations of uniformly bounded elements taken inside $L_{\sigma}^{\infty}([0, T], \mathcal{M})$), and as $\mathcal{R}^{N}(T) = \Psi(\mathcal{E}^{N}[0, T])$ is the image of a convex set under a convex-linear map, it must be convex as well.

The inclusion $\mathcal{R}^{N}(T_{1}) \subseteq \mathcal{R}^{N}(T_{2})$ for $T_{1} \leq T_{2}$ is also easily seen: if $V \in \mathcal{E}^{N}[0, T_{1}]$, define
\begin{gather*}
    \widehat{V}(t) := \begin{cases}
    V(t), \;\;\;\; 0 \leq t \leq T_{1},
        \\ 0, \;\;\;\;\;\;\;\;\;\;\; T_{1} < t \leq T_{2}.
    \end{cases}
\end{gather*}
\noindent It is clear $\hat{V} \in \mathcal{E}^{N}[0, T_{2}]$, which is sufficient to show the claimed inclusion.

To show compactness, first note that
\begin{gather*}
    \mathcal{E}^{N}[0, T] = N\cdot (L_{\sigma}^{\infty}([0, T], \mathcal{M}))_{1},
\end{gather*}
\noindent where the latter denotes the unit-ball. By the Banach-Alaoglu Theorem, it is ${\rm wk}^{*}$-compact (and therefore ${\rm wk}^{*}$-closed as well). To finish the proof, we verify that $\Psi$ is ${\rm wk}^{*}$-continuous. 

To that end, assume that $V_{n} \rightarrow_{\rm wk^{*}} V$ as $n\rightarrow \infty$ in $L_{\sigma}^{\infty}([0, T], \mathcal{M})$; then by the previous comments, for any $F \in L^{1}([0, T], L^{1}(\mathcal{M}, \tau))$ we have
\begin{gather*}
    \langle V_{n}, F\rangle = \int\limits_{0}^{T}\tau(V_{n}(t)F(t))dt\rightarrow \int\limits_{0}^{T}\tau(V(t)F(t))dt = \langle V, F\rangle,
\end{gather*}
\noindent where $V_{n}(t), V(t) \in M, F(t) \in L^{1}(\mathcal{M}, \tau)$ for all $t \in [0, T]$.  We will use this characterization of convergence to show that for any $\eta \in H$,
\begin{gather}\label{eqn:conv_of_phi_image}
    \bigg\langle \int\limits_{0}^{T}U^{-t}V_{n}(t)U^{t}\xi_{0}dt, \eta\bigg\rangle\rightarrow \bigg\langle \int\limits_{0}^{T}U^{-t}V(t)U^{t}\xi_{0}dt, \eta\bigg\rangle,  
\end{gather}
\noindent as $n\rightarrow \infty$.

Admissibility conditions on $V_{n}$ for each $n \in \mathbb{N}$ ensure that the integral term in (\ref{eqn_solution_for_vector}) exists in the strong sense, even by removing a factor of $U^{T}$ for fixed $T$. This also ensures that the integral is well-defined in the weak sense (i.e., as a Dunford-Pettis integral by the reflexivity of $H$). For $\xi, \eta \in H$ and each time $t \in [0, T]$ the mapping $a \mapsto \langle aU^{t}\xi, U^{t}\eta\rangle$ is a normal functional on $\mathcal{M}$. By the correspondence between $\mathcal{M}_{\ast}$ and $L^{1}(\mathcal{M}, \tau)$, this implies there exists some $F_{\xi, \eta}(t) \in L^{1}(\mathcal{M}, \tau)$ such that $\langle aU^{t}\xi, U^{t}\eta\rangle = \tau(a F_{\xi, \eta}(t))$ for all $a \in \mathcal{M}$. Let $F_{\xi, \eta}: [0, T]\rightarrow L^{1}(\mathcal{M}, \tau)$ denote the mapping $t \mapsto F_{\xi, \eta}(t)$ for fixed $\xi, \eta \in H$.

By properties of the weak integral and the inner product on $H$, we have that
\begin{eqnarray*}
    \bigg\langle\int\limits_{0}^{T}U^{-t}V_{n}(t)U^{t}\xi_{0}dt, \eta\bigg\rangle 
    & = &
    \int\limits_{0}^{T}\langle U^{-t}V_{n}(t)U^{t}\xi_{0}, \eta\rangle dt \\
    & = &
    \int\limits_{0}^{T}\langle V_{n}(t)U^{t}\xi_{0}, U^{t}\eta\rangle dt \\
    & = &
    \int\limits_{0}^{T}\tau(V_{n}(t)F_{\xi_{0}, \eta}(t))dt,
\end{eqnarray*}
\noindent where $F_{\xi_{0}, \eta}(t) \in L^{1}(\mathcal{M}, \tau)$ for all $t \in [0, T]$ by previous remarks. Thus,
\begin{gather*}
    \int\limits_{0}^{T}\tau(V_{n}(t)F_{\xi_{0}, \eta}(t))dt\rightarrow \int\limits_{0}^{T}\tau(V(t)F_{\xi_{0}, \eta}(t))dt
\end{gather*}
\noindent as $n\rightarrow \infty$, which (by a reversal of the arguments above) establishes (\ref{eqn:conv_of_phi_image}). As multiplication by fixed unitary $U^{T}$ is a bounded (norm-continuous) linear mapping, we may bring $U^{T}$ inside the integral so that
\begin{gather*}
    \bigg\langle\int\limits_{0}^{T}U^{T-t}V_{n}(t)U^{t}\xi_{0}dt, \eta\bigg\rangle \rightarrow \bigg\langle \int\limits_{0}^{T}U^{T-t}V(t)U^{t}\xi_{0}dt, \eta\bigg\rangle, \;\;\;\; n\rightarrow \infty.
\end{gather*}
By the reflexivity of $H$ as a Hilbert space, this shows $\Psi(V_{n})\rightarrow \Psi(V)$ weakly (and thus in the ${\rm wk}^{*}$-topology), and so $\Psi$ is ${\rm wk}^{*}$-continuous as claimed. This concludes the proof. 
\end{proof} 
\begin{remark}
\rm It is easily verified that if $N, N' > 0$ with $N' \geq N$, then $\mathcal{R}^{N}(T) \subseteq \mathcal{R}^{N'}(T)$. For $T_{1}, T_{2} > 0$ with $T_{1} \leq T_{2}$, this yields the inclusions
\begin{gather*}
\renewcommand{\arraystretch}{0}
\begin{matrix}
\mathcal{R}^{N}(T_{2}) & \subset & \mathcal{R}^{N'}(T_{2}) \\[2ex]
\rotatebox[origin=c]{90}{$\subset$} &&
\rotatebox[origin=c]{90}{$\subset$} \\[2.5ex]
\mathcal{R}^{N}(T_{1}) & \subset & \mathcal{R}^{N'}(T_{1})
\end{matrix}
\end{gather*}
\end{remark}

\begin{theorem}
Suppose the system reaches some vector state $\xi \in H$ at time $T > 0$ for parameter $N > 0$; then there exists a admissible control function for which the system reaches $\xi$ in minimal time.
\end{theorem}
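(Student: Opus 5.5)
We would follow the classical Filippov/Hermes--LaSalle argument for time-optimal control, using Proposition \ref{prop:properties_of_reachable_set} as the key input. Write $\xi_T' := \xi - U^{t}\xi_0$ for the relevant target at time $t$. Set
\begin{gather*}
    t^{*} := \inf\{t \in [0, T]: \; \xi - U^{t}\xi_{0} \in \mathcal{R}^{N}(t)\}.
\end{gather*}
The set is nonempty because $T$ belongs to it by hypothesis. Choose a decreasing sequence $t_{n} \searrow t^{*}$ with $\xi - U^{t_{n}}\xi_{0} \in \mathcal{R}^{N}(t_{n})$. The goal is to show $\xi - U^{t^{*}}\xi_{0} \in \mathcal{R}^{N}(t^{*})$. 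The controls realizing it are then time-optimal by definition of $t^{*}$.

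Because $t \mapsto U^{t}\xi_{0}$ is norm continuous (Stone's theorem), $\xi - U^{t_{n}}\xi_{0} \to \xi - U^{t^{*}}\xi_{0}$ in norm. Pick $V_{n} \in \mathcal{E}^{N}[0, t_{n}]$ with $\Psi(V_{n}) = \xi - U^{t_{n}}\xi_0$, and extend each $V_n$ by $0$ to $[0,T]$. We then compare $\Psi_{t_{n}}(V_{n})$ with $\Psi_{t^{*}}(V_{n}|_{[0,t^{*}]})$. Since
\begin{gather*}
\int_{0}^{t}U^{t-s}V(s)U^{s}\xi_{0}\,ds = U^{t}\int_{0}^{t}U^{-s}V(s)U^{s}\xi_{0}\,ds,
\end{gather*}
the difference is bounded in norm by
\begin{gather*}
\|(U^{t_{n}}-U^{t^{*}})w_{n}\| + N(t_{n}-t^{*})\|\xi_{0}\|,
\end{gather*}
where $w_{n} := \int_{0}^{t^{*}}U^{-s}V_{n}(s)U^{s}\xi_{0}\,ds$ has $\|w_n\|\le NT\|\xi_0\|$.

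The first term is the delicate point. It is not uniformly small in norm for arbitrary bounded $w_{n}$, so we pass to weak convergence. By the wk$^{*}$-compactness of $\mathcal{E}^{N}[0,t^{*}]$ (Banach--Alaoglu, as in the proof of Proposition \ref{prop:properties_of_reachable_set}) and metrizability of bounded sets (separability of the predual, Lemma \ref{lem:sep_of_lp}), we pass to a subsequence with $V_{n}|_{[0,t^{*}]} \to_{\rm wk^*} V^{*}$. The wk$^{*}$-to-weak continuity of $\Psi$ established in that proof then gives $w_{n} \rightharpoonup w^{*}$. For fixed $\eta$ we have
\begin{gather*}
\langle (U^{t_n}-U^{t^*})w_n,\eta\rangle = \langle w_n,(U^{-t_n}-U^{-t^*})\eta\rangle \to 0,
\end{gather*}
since $\|w_n\|$ is bounded and $U^{-t_n}\eta\to U^{-t^*}\eta$ in norm. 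Hence $\Psi_{t_n}(V_n) \rightharpoonup U^{t^*}w^* = \Psi_{t^*}(V^*)$. Uniqueness of weak limits then yields $\xi - U^{t^{*}}\xi_{0} = \Psi_{t^{*}}(V^{*})$. This also follows directly from weak closedness, i.e. weak compactness, of $\mathcal{R}^{N}(t^{*})$.

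The main obstacle is that the wk$^{*}$ limit $V^{*}$ must itself be admissible. The closed ball of radius $N$ in $L^{\infty}_{\sigma}([0,t^{*}],\mathcal{M})$ gives measurability and the bound (iii) directly. Self-adjointness passes to the limit by testing against self-adjoint $F$. Strong continuity (ii), however, need not be preserved under wk$^{*}$ limits. We would handle this exactly as the paper does in Proposition \ref{prop:properties_of_reachable_set}, which identifies $\mathcal{E}^{N}[0,T]$ with $N\cdot(L^{\infty}_{\sigma})_{1}$. Working within that identification, the limit is an admissible control in the paper's sense, and the theorem follows. If one insists on strong continuity, we would instead approximate $V^{*}$ by wk$^{*}$-dense continuous controls and note only that the infimum is attained by the reachable set $\mathcal{R}^N(t^*)$. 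By weak compactness that set is the same as the closure of the continuous-control images.
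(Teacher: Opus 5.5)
Your proposal is correct and follows essentially the same Filippov-type route as the paper. Both take the infimal time and a decreasing sequence of reaching times, extract a wk$^{*}$-limit of the zero-extended or restricted controls via Banach--Alaoglu, and combine the wk$^{*}$-to-weak continuity of $\Psi$ with the tail estimate $N|T-T'|$. Your version is more careful than the paper's in two ways: you explicitly handle the moving prefactor $U^{t_n}$ and the moving target $\xi-U^{t_n}\xi_0$, which the paper's estimate leaves out; and you flag that strong continuity is not preserved under wk$^{*}$-limits, a point the paper sidesteps by identifying $\mathcal{E}^{N}[0,T]$ with the whole ball $N\cdot(L^{\infty}_{\sigma}([0,T],\mathcal{M}))_{1}$.
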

\begin{proof}
The proof follows in a manner similar to standard Filippov existence-type theorems (see for instance \cite[Theorem 13.1]{hls}). Let
\begin{gather*}
    T^{*} = \inf\{T: \xi_{T} \in \mathcal{R}^{N}(T) \; {\rm such \; that \;} \xi = U^{T}\xi_{0}+\xi_{T}\};
\end{gather*}
\noindent then there exists a sequence $(T_{n})_{n=1}^{\infty}$ decreasing to $T^{*}$ with $\xi_{T_{n}} \in \mathcal{R}^{N}(T_{n})$ for each $n \in \mathbb{N}$. Suppose $V_{n} \in \mathcal{E}^{N}[0, T_{n}]$ is the control function corresponding to $\xi_{T_{n}}$ for time $T_{n}$ (under the mapping $\Psi$). For $t > T_{n}$, set $V_{n}(t) = 0$; abusing notation slightly, call the resulting control function $V_{n}$. Note that by Proposition \ref{prop:properties_of_reachable_set}, as $T_{1} \geq T_{2} \geq \cdots \geq T^{*}$ then $\mathcal{E}^{N}[0, T_{i}] \subseteq \mathcal{E}^{N}[0, T_{1}]$ for all $i \geq 1$; thus, we may take a ${\rm wk}^{*}$-limit point $V$ of $(V_{n})_{n=1}^{\infty}$ inside $\mathcal{E}^{N}[0, T_{1}]$. By ${\rm wk}^{*}$-continuity of $\Psi$, we claim that $\Psi(V)$ reaches $\xi$ at time $T^{*}$. More explicitly, note that for a fixed self-adjoint $V \in L^{0}(\mathcal{M}, \tau)$ defined over a time interval with $\|V\|\leq N$, then if $T, T' \geq 0$ and if $V$ is defined for parameters $T, T'$ we have 
\begin{eqnarray*}
    \bigg\|\int\limits_{0}^{T}U^{-t}V(t)U^{t}\xi_{0} dt-\int\limits_{0}^{T'}U^{-t}V(t)U^{t}\xi_{0} dt\bigg\| 
    & = & \bigg\|\int\limits_{T'}^{T}U^{-t}V(t)U^{t}\xi_{0}dt\bigg\| \\
    &\leq & N|T-T'|.
\end{eqnarray*}
\noindent Pairing this with the ${\rm wk}^{*}$-continuity of $\Psi$, we conclude the proof.
\end{proof} 


\section{Quantum control systems and the dynamical Lie algebra}\label{sec:qcs_geom_control}

In this section, we specialize to the case where the control Hamiltonian $V(t)$ in (\ref{eqn:schrodinger}) further decomposes as a finite sum of Hamiltonians with piecewise constant, bounded control functions over all time intervals in order to pursue the following main objectives. First, we show how under such assumptions using the results of \cite{am_one} we may construct Lie algebras in $L^{0}(\mathcal{M}, \tau)$ using the Hamiltonians, recovering the ``dynamical Lie algebra" often discussed in geometric control theory (see \cite{daless, js}). Second, we discuss various notions of controllability for quantum systems (without further discussion on time-optimal control), and indicate distinctions between the finite and infinite-dimensional setting. Finally, we propose an operator-algebraic informed approximation scheme for the time-independent Hamiltonian when it possesses complicated spectral behavior.

\subsection{The dynamical Lie algebra}

Assume that the Hamiltonian $H(t)$ arising in (\ref{eqn:schrodinger}) is of the form 
\begin{gather}\label{eqn_bilinear_system}
    H(t) = V_{0}+\sum\limits_{j=1}^{N}u_{j}(t)V_{j},
\end{gather}
\noindent where $V_{j}, j = 0, \hdots, N$ are (potentially unbounded) self-adjoint operators on $H$, and $u_{j}: \bb{R}\rightarrow \bb{R}$ are piecewise constant control functions for $j = 1, \hdots, N$; systems of this form are sometimes called ``control-affine" in the literature. Now and in the sequel, we drop any other assumptions on solutions (i.e., the assumptions in (\ref{eqn:approx_schrodinger}) are no longer required). 
\begin{lemma}
Let $\mathcal{M}$ be a finite von Neumann algebra, and assume $V_{j} \in L^{0}(\mathcal{M}, \tau)$ are self-adjoint for $j = 0, \hdots, N$. Then the $V_{j}$'s have a common dense domain in $H$, and 
\begin{gather}\label{eqn:v_y}
    V(y) := V_{0}+\sum\limits_{j=1}^{N}y_{j}V_{j} \; \eta \; \mathcal{M},
\end{gather}
\noindent with $V(y)$ self-adjoint for every choice of $y = (y_{1}, \hdots, y_{N}) \in \bb{R}^{N}$.
\end{lemma}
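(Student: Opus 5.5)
The plan is to derive all three assertions (common dense domain, affiliation, self-adjointness) from the Murray--von Neumann algebra structure on $L^{0}(\mathcal{M},\tau)$ (Theorem \ref{th:aff_op_algebra}) together with Remark \ref{rem:completely_dense}. Throughout, $V(y)$ is read as the closure of the algebraic sum, i.e.\ the strong sum in $L^{0}(\mathcal{M},\tau)$; this is the only reading under which the statement can hold for unbounded $V_j$.

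\textbf{Common domain.} Set $D := \bigcap_{j=0}^{N}{\rm dom}(V_{j})$. Each ${\rm dom}(V_j)$ is $\tau$-dense because $V_j \; \eta \; \mathcal{M}$ and $\mathcal{M}$ is finite. By Remark \ref{rem:completely_dense}, a finite intersection of $\tau$-dense subspaces is again $\tau$-dense. Hence there are projections $p_\alpha \nearrow I$ with $p_\alpha H \subseteq D$, so $D$ is dense in $H$. This gives the common dense domain, and the algebraic sum $V_0+\sum_j y_jV_j$ is densely defined on $D$.

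\textbf{Affiliation.} Apply Theorem \ref{th:aff_op_algebra} inductively. Real scalar multiples $\overline{y_jV_j}=y_jV_j$ lie in $L^{0}(\mathcal{M},\tau)$, and iterated closed sums of elements of $L^{0}$ stay in $L^{0}$. A short check shows the closure of the full algebraic sum on $D$ equals the iterated strong sum. One can see this directly: on $D$ every unitary $u\in U(\mathcal{M}')$ preserves each ${\rm dom}(V_j)$ and commutes with each $V_j$, so it preserves $D$ and commutes with the sum. The sum is therefore closable and affiliated, and its closure is affiliated by the Remark following the definition of affiliation. Alternatively, use the uniqueness of closed extensions in $L^{0}$: two operators in $L^{0}(\mathcal{M})$ agreeing on a $\tau$-dense subspace coincide, which follows from \cite{mvn}. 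Either way $V(y)\in L^{0}(\mathcal{M},\tau)$.

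\textbf{Self-adjointness.} This is the main obstacle, since a sum of unbounded self-adjoint operators is in general only symmetric. The plan is to use that the adjoint is the involution of the $*$-algebra $L^{0}(\mathcal{M},\tau)$ and is additive and conjugate-linear with respect to the strong operations. That gives
\[
V(y)^{*}=\overline{V_0^{*}+\textstyle\sum_j y_j V_j^{*}}=V(y),
\]
because each $y_j$ is real and each $V_j^*=V_j$. If one wants this without quoting the $*$-algebra axioms, argue as follows. $V(y)$ is symmetric on $D$, so $V(y)\subseteq V(y)^{*}$. Both $V(y)$ and $V(y)^*$ lie in $L^{0}(\mathcal{M})$, the adjoint of an affiliated operator being affiliated. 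Finally, in a finite von Neumann algebra there is no proper inclusion between elements of $L^{0}(\mathcal{M})$ (Murray--von Neumann's maximality property; cf.\ \cite[Section 16]{mvn}). Hence $V(y)=V(y)^{*}$. This maximality fact is the key input, and it is where finiteness of $\mathcal{M}$ is genuinely used.
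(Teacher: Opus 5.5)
Your proposal is correct and follows the same route as the paper, which invokes Murray--von Neumann for each of the three claims: Lemma 16.4.3 for the common dense domain, Theorem XV for affiliation of $V(y)$, and Lemma 16.4.2 for self-adjointness. These are the same $\tau$-density, $*$-algebra and maximality inputs you use, and your version simply spells out more detail.

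One small correction: in your direct affiliation argument, closability of the algebraic sum on $D$ does not follow from commuting with $U(\mathcal{M}')$; it follows from the sum being densely defined and symmetric (the $y_j$ are real), which you observe anyway in the self-adjointness step.
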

\begin{proof}
That $V_{j}, j= 0, \hdots, N$ have a common dense domain follows from \cite[Lemma 16.4.3]{mvn}; indeed, any polynomial with variables over $a_{i}, a_{i}^{*}$ when $a_{i}, a_{i}^{*}$ are affiliated with $\mathcal{M}$ have a dense domain in $H$. By virtue of \cite[Theorem XV]{mvn}, each $V(y) \; \eta \; \mathcal{M}$ for $y \in \bb{R}^{N}$, and the fact that $V(y)$ is self-adjoint follows from the fact that each $V_{j}$ is, in conjunction with \cite[Lemma 16.4.2]{mvn}.
\end{proof}
\begin{remark}
\rm In the sequel, for $y \in \mathbb{R}^{N}$ we let $V(y)$ denote the linear combination of $V_{0}, \hdots, V_{N}$ using $y$ as defined in (\ref{eqn:v_y}).
\end{remark}
We note that if $V(t)$ decomposes as in (\ref{eqn_bilinear_system}), then $V(t)$ is automatically a control function over $[0, T]$ for any positive time $T > 0$. If we assume $V_{1}, \hdots, V_{N}$ have some joint regularity (that is, $V_{j} \in \cl{M}$ for $j = 1, \hdots, N$) then $V(t)$ is automatically an admissible control function for some cutoff $N'$, where $N'$ comes from the maximal bound on the values of the piece-wise constant functions $u_{j}$, with $j = 1, \hdots, N$ and the maximum norm bound over the Hamiltonians. However, we will not require any such regularity for the results in this section. Indeed: as we will see, we may still discuss various algebraic notions of controllability for the system without such assumptions, and analytic concerns will be less relevant to our approach. 

\begin{proposition}
If $V_{0}, \hdots, V_{N}$ are (potentially unbounded) self-adjoint operators acting on separable Hilbert space $H$, there exists at least one von Neumann algebra $\mathcal{M}$ such that $V_{j} \in L^{0}(\mathcal{M})$ for $j = 0, \hdots, N$. 
\end{proposition}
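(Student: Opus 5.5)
The plan is to exhibit an explicit von Neumann algebra with the required affiliation. The trivial candidate is $\mathcal{M} = \mathcal{B}(H)$: its commutant is $\mathcal{B}(H)' = \mathbb{C}I$, so $U(\mathcal{M}') = \{\lambda I: |\lambda| = 1\}$. Every closed, densely defined operator $a$ on $H$ satisfies $(\lambda I)a = a(\lambda I)$, with both sides defined on ${\rm dom}(a)$. Hence each self-adjoint $V_{j}$, being closed and densely defined, lies in $L^{0}(\mathcal{B}(H))$. This already proves the statement as written.

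Since the surrounding discussion concerns finite algebras, I would also give a more informative choice that lies inside $\mathcal{B}(H)$. Set
\begin{gather*}
\mathcal{M} := \bigg(\bigcap_{j=0}^{N}\{V_{j}\}'\bigg)',
\end{gather*}
where $\{V_{j}\}'$ denotes the bounded operators $b$ with $bV_{j} \subseteq V_{j}b$. Since $V_{j}$ is self-adjoint, $\{V_{j}\}' = \{V_{j}\}'\cap\{V_{j}^{*}\}'$. As noted before the definition of $W^{*}(a)$, this is a strongly closed unital $*$-subalgebra. The intersection $\mathcal{S} := \bigcap_{j}\{V_{j}\}'$ is therefore a von Neumann algebra, and so is $\mathcal{M} = \mathcal{S}'$; equivalently, $\mathcal{M}$ is the von Neumann algebra generated by $W^{*}(V_{0}), \hdots, W^{*}(V_{N})$. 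The bicommutant theorem gives $\mathcal{M}' = \mathcal{S}''=\mathcal{S}$. Thus every unitary $u \in \mathcal{M}'$ commutes with each $V_{j}$, which is exactly the affiliation condition $uV_{j} = V_{j}u$. So $V_{j} \in L^{0}(\mathcal{M})$, and $\mathcal{M}$ is the smallest von Neumann algebra with this property.

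The only point needing care is the $*$-closedness of $\{V_{j}\}'$. For $b \in \{V_{j}\}'$, taking adjoints of $bV_{j} \subseteq V_{j}b$ gives $b^{*}V_{j}^{*} \subseteq (V_{j}b)^{*}$, and $(V_{j}b)^{*} \supseteq b^{*}V_{j}^{*}$ holds in general; I would verify directly that this yields $b^{*}V_{j} \subseteq V_{j}b^{*}$ using $V_{j} = V_{j}^{*}$. A cleaner route is to use that $b$ commutes with $V_{j}$ iff it commutes with the bounded resolvent $(V_{j}-i)^{-1}$ and its adjoint $(V_{j}+i)^{-1}$. This makes $\{V_{j}\}' = \{(V_{j}\pm i)^{-1}\}'$ the commutant of a self-adjoint set of bounded operators, hence a von Neumann algebra.

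I expect no real obstacle here. The subtlety to flag is that $\mathcal{M}$ obtained this way need not be finite; for example, generic non-commuting $V_{j}$ may generate all of $\mathcal{B}(H)$, which is infinite when $\dim H = \infty$. The proposition only asserts existence of some von Neumann algebra, so finiteness would have to be imposed as a separate hypothesis.
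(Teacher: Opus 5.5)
Your proposal is correct. Your second construction is, in commutant language, the paper's own proof. The paper takes, for each $j$, the abelian von Neumann algebra $\mathcal{M}_{j}$ generated by the spectral projections of $V_{j}$. It lets $\mathcal{M}$ be the von Neumann algebra generated by $\mathcal{M}_{0}, \hdots, \mathcal{M}_{N}$, and concludes from $\mathcal{M}' \subseteq \mathcal{M}_{j}'$ that unitaries in $\mathcal{M}'$ commute with each $V_{j}$. Since a bounded operator commutes with a self-adjoint $V_{j}$ exactly when it commutes with its spectral projections, $\mathcal{M}_{j}' = \{V_{j}\}'$. So the paper's $\mathcal{M}$ is exactly your $\bigl(\bigcap_{j}\{V_{j}\}'\bigr)'$.

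Your first observation, that $\mathcal{M} = \mathcal{B}(H)$ already works because its commutant is $\mathbb{C}I$, is a genuinely different and more elementary route. It shows the proposition as literally stated is trivial. What the paper's construction (and your second one) adds is minimality, which is what links it to the finiteness question in the remark that follows. If the $V_{j}$ are affiliated with some finite $\mathcal{N}$, then $\mathcal{N}' \subseteq \bigcap_{j}\{V_{j}\}'$, because $\mathcal{N}'$ is spanned by its unitaries and each $\{V_{j}\}'$ is a linear space. Hence your $\mathcal{M} \subseteq \mathcal{N}$, and a unital von Neumann subalgebra of a finite algebra is finite. So the $V_{j}$ are affiliated with some finite algebra if and only if the minimal one is finite, whereas $\mathcal{B}(H)$ is never finite when $\dim H = \infty$.

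One small tidy-up in your $*$-closedness step. The correct chain is $b^{*}V_{j} \subseteq (V_{j}b)^{*} \subseteq (bV_{j})^{*} = V_{j}b^{*}$. Here the first inclusion is the general fact, the middle one comes from reversing $bV_{j} \subseteq V_{j}b$, and the final equality uses that $b$ is bounded. Your resolvent argument is also valid and is the cleaner of the two.
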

\begin{proof}
That each $V_{j}$ is affiliated with an abelian von Neumann algebra $\mathcal{M}_{j}$ for $j = 1, \hdots, N$ acting on $H$ follows from \cite[Lemma 5.6.7]{kr_one}. Let $\mathcal{M}$ be the von Neumann algebra on $H$ generated by all $\mathcal{M}_{j}$'s; that is, let $\mathcal{M}$ be the von Neumann algebra generated by each $\mathcal{M}_{j}$ (considered as a union in the set-theoretic sense). We claim that $V_{j} \in L^{0}(\mathcal{M})$. Indeed, as $\mathcal{M}_{j} \subseteq \mathcal{M}$, then $\cl{M}' \subseteq \mathcal{M}_{j}'$ for $j = 0, \hdots, N$. Thus, if $u \in U(\cl{M}')$, $u \in U(\cl{M}_{j}')$ for $j = 0, \hdots, N$, and so $uV_{j}u^{*} = V_{j}$ for $j = 0, \hdots, N$. This means $V_{j} \in L^{0}(\mathcal{M})$ for each $j = 0, \hdots, N$. 
\end{proof}
\begin{remark}
\rm In order to guarantee $\mathcal{M}$ is finite type, in general more conditions on the $V_{j}$'s are be needed. One sufficient condition is if the spectrum of the inverses of the $(V_{j}+iI_{H})$'s are pairwise disjoint. Under this assumption, \cite[Lemma 3]{wogen} implies $\mathcal{M}$ is a finite direct sum of abelian von Neumann algebras, and hence of finite type.
\end{remark}

\medskip

\begin{definition}
Let $H$ be a Hilbert space, and $\mathcal{M}$ a finite von Neumann algebra acting on $H$; furthermore, let $V_{0}, \hdots, V_{N}$ be affiliated with $\mathcal{M}$. We define
\begin{itemize}
    \item[(i)] $\cl{R}_{0}(V_{0}, \hdots, V_{N})$ as the smallest sub-semigroup of $U(\mathcal{M})$ containing all unitaries $e^{itV(y)}$ with $t > 0$ and $y \in \bb{R}^{N}$; we call it the {\rm algebraically reachable set}. 
    \item[(ii)] $\cl{R}(V_{0}, \hdots, V_{N})$ to be the closure of $\cl{R}_{0}(V_{0}, \hdots, V_{N})$ under the strong operator topology inside $U(\mathcal{M})$; we call it the {\rm strongly reachable set}. 
    \item[(iii)] $\cl{G}(V_{0}, \hdots, V_{N})$ as the smallest, strongly closed subgroup of $U(\mathcal{M})$ containing all unitaries $e^{it V(y)}$ with $t \in \bb{R}$ and $y \in \bb{R}^{N}$. 
\end{itemize}
\end{definition}
\begin{remark}
\rm Note that $\mathcal{G}(V_{0}, \hdots, V_{N})$ may equivalently be characterized as the smallest, strongly closed subgroup of $U(\mathcal{M})$ containing all unitaries $e^{it V_{j}}$, where $t \in \mathbb{R}$ and $j = 0, \hdots, N$. This follows from Remark \ref{rem:completely_dense} and an application of \cite[Proposition 2.2]{keyl}. In the sequel, we may use this identification without further mention.
\end{remark}
\begin{proposition}\label{p_strong_reach_equals_group}
Suppose $V_{0}, \hdots, V_{N} \in L^{0}(\mathcal{M}, \tau)$, and $V_{0}$ has pure-point spectrum. Then
\begin{gather*}
    \cl{R}(V_{0}, \hdots, V_{N}) = \cl{G}(V_{0}, \hdots, V_{N}).
\end{gather*}
\end{proposition}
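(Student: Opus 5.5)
The inclusion $\cl{R}(V_{0}, \hdots, V_{N}) \subseteq \cl{G}(V_{0}, \hdots, V_{N})$ is immediate. Every generator $e^{itV(y)}$ with $t>0$ lies in $\cl{G}$, and $\cl{G}$ is a strongly closed subgroup, hence a strongly closed semigroup. So $\cl{R}_{0} \subseteq \cl{G}$, and taking strong closures inside $U(\mathcal{M})$ gives the inclusion.

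For the reverse inclusion, we first show that $\cl{R}$ is itself a group, since it is strongly closed by definition and contains the generators. Because $\cl{R}$ is a strongly closed semigroup containing $e^{itV(y)}$ for $t>0$, it suffices to show that the "negative time" unitaries $e^{-itV(y)}$, $t>0$, also lie in $\cl{R}$. Once this holds, $\cl{R}$ is a strongly closed semigroup containing all $e^{itV(y)}$ with $t\in\bb{R}$. It therefore contains the group generated by them, since products of elements $e^{itV(y)}$ with arbitrary real $t$ lie in it. Strong closedness in the SOT-closed group $U(\mathcal{M})$ (Proposition 2.1, using finiteness) then gives $\cl{G}\subseteq \cl{R}$.

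By the Remark following the definition, $\cl{G}$ is also generated by the $e^{itV_{j}}$ alone. So it is enough to show that $e^{-itV_{j}}\in\cl{R}$ for each $j$ and $t>0$. We would obtain this through a Trotter-type limit from Lemma \ref{l_tk_nelson}(i), combined with a recurrence argument.

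\textbf{Step 1 (drift).} Since $V_{0}$ has pure-point spectrum, write $V_{0}=\sum_{k}\lambda_{k}p_{k}$ with spectral projections $p_{k}$. Then $e^{itV_{0}}$ is a strong limit of almost periodic motions. For any finite set of eigenvalues $\lambda_{1},\dots,\lambda_{m}$ and any $\epsilon>0$, Dirichlet's simultaneous approximation theorem (or Kronecker/recurrence on the torus $\bb{T}^{m}$) gives arbitrarily large $s>0$ with $|e^{is\lambda_{k}}-1|<\epsilon$ for $k\le m$. Approximating a vector $\xi$ by $\sum_{k\le m}p_{k}\xi$, we get a sequence $s_{n}\to\infty$ with $e^{is_{n}V_{0}}\to I$ strongly. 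This needs a diagonal argument over $m$ and $\epsilon$, which is possible because $H$ is separable. Hence $e^{-itV_{0}}=\text{s-}\lim_{n} e^{i(s_{n}-t)V_{0}}$, and $s_{n}-t>0$ for large $n$, so $e^{-itV_{0}}\in\cl{R}$. The same argument applies to any $V(y)$ with pure-point spectrum, but we only have that assumption for $V_{0}$.

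\textbf{Step 2 (controls).} For $j\ge1$ and $t>0$, consider $e^{-it(V_{0}+yV_{j})}$ with $y>0$ large. The scaled generator $\tfrac{1}{y}V(y e_{j})=\tfrac1y V_{0}+V_{j}$ converges to $V_{j}$ in the strong resolvent topology, using the affiliated-operator algebra of Theorem \ref{th:aff_op_algebra} and the completeness of $L^{0}(\mathcal{M},\tau)$ in SRT. So $e^{-is(V_0+yV_j)/y}\to e^{-isV_j}$ strongly. It remains to realize $e^{-i\sigma V(ye_j)}$, for $\sigma>0$, inside $\cl{R}$. We would write it via Lemma \ref{l_tk_nelson}(i) as a strong limit of products $(e^{i\sigma' V(ye_j)} e^{-i\sigma'' V_{0}})^{n}$, using the reversed drift from Step 1. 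More simply, we use $V(-ye_{j})=V_0-yV_j$: we have $e^{i\sigma V(-ye_j)}\in\cl{R}_{0}$, and by Trotter with $e^{-2i\sigma V_0/n}\in\cl{R}$ we obtain $e^{-i\sigma(V_0+yV_j)}\in\cl{R}$. Essential self-adjointness on the common $\tau$-dense domain holds by the Lemma giving self-adjointness of $V(y)$ and the Murray--von Neumann closure. Thus $e^{-itV_{j}}\in\cl{R}$ for all $j$.

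\textbf{Main obstacle.} I expect the main obstacle to be Step 1: producing a sequence $s_{n}\to\infty$ with $e^{is_{n}V_{0}}\to I$ strongly for infinitely many eigenvalues. This requires care with the diagonal approximation, though it is standard via the compactness of the strong closure of $\{e^{isV_{0}}\}$, which is an abelian compact group when the spectrum is pure point. With that in hand, the remaining steps are Trotter product manipulations, justified by the closed-operator calculus in $L^{0}(\mathcal{M},\tau)$.
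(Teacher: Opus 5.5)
Your argument is correct and essentially matches the paper's proof. In both, the reversed drift $e^{-i\sigma V_0}$ comes from recurrence using the pure-point spectrum. Your identity $-\sigma V(y)=\sigma V(-y)-2\sigma V_0$, combined with Lemma~\ref{l_tk_nelson}(i), is exactly the paper's decomposition $tV(y)=t'V_0+sV(\tfrac{t}{s}y)$ with $s=|t|$.

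Because that Trotter step works verbatim for every $y\in\mathbb{R}^N$, the detour through the individual $V_j$ and the SRT scaling limit $\tfrac{1}{y}V(ye_j)\to V_j$ is unnecessary. If you keep it, you would also need the positive-time unitaries $e^{itV_j}$, $j\ge 1$, to lie in $\mathcal{R}$; these follow from the same scaling limit.
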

\begin{proof}
The ideas are similar to those in \cite{keyl}. By definition, it is clear that $\cl{R}(V_{0}, \hdots, V_{N}) \subseteq \cl{G}(V_{0}, \hdots, V_{N})$. We wish to show that $e^{itV(y)} \in \cl{R}(V_{0}, \hdots, V_{N})$ for all $t \leq 0$. In the case when $t = 0$, by strong continuity we have $I \in \mathcal{R}(V_{0}, \hdots, V_{N})$ as we may view $I$ as the strong limit of $e^{itV(y)} \in \mathcal{R}(V_{0}, \hdots, V_{N})$ when $t\rightarrow 0^{+}$.

If $t < 0$, choose $s > 0$ with $t' := t-s$. Through some basic algebra, we have $tV(y) = t' V_{0}+sV\bigg(\frac{t}{s}y\bigg)$. As $V_{0}$ has pure-point spectrum, let $(\psi_{n})_{n \in \mathbb{N}}$ denote an orthonormal basis of $H$ consisting of eigenvectors for $V_{0}$. Let $p_{N}$ denote the orthogonal projection of $H$ onto the span of $\{\psi_{1}, \hdots, \psi_{N}\}$; by invariance of $p_{N}H$ under $V_{0}$, the restriction $e^{it'V_{0}}|_{p_{N}H}$ defines a one-parameter group of unitaries on $p_{N}H$. As this space is finite-dimensional, for any $\epsilon > 0$ there exists a $t_{+}' > 0$ such that $\|e^{it_{+}'V_{0}}|_{p_{N}H}-e^{it'V_{0}}|_{p_{N}H}\| < \epsilon/3$. Therefore, for any $\epsilon > 0$ and unit vector $\psi \in H$ take $N \in \mathbb{N}$ such that $\|p_{N}^{\perp}\psi\| <\epsilon/3$. Now, 
\begin{eqnarray*}
    \|e^{it_{+}'V_{0}}\psi-e^{itV_{0}}\psi\| 
    & \leq &
    \|(e^{it_{+}'V_{0}}-e^{it'V_{0}})p_{N}\psi\|+\|(e^{it_{+}'V_{0}}-e^{it'V_{0}})p_{N}^{\perp}\psi\|\\
    & \leq &
    \frac{\epsilon}{3}+\|e^{it_{+}'V_{0}}\|\|p_{N}^{\perp}\psi\|+\|e^{it'V_{0}}\|\|p_{N}^{\perp}\psi\| \\
    & \leq &
    \epsilon.
\end{eqnarray*}
\noindent As $\epsilon > 0$ was arbitrary, this implies that for any $\epsilon > 0$ there exists some $t_{+}' > 0$ with $e^{it_{+}'V_{0}}$ contained in a strong neighborhood of radius $\epsilon $ around $e^{it'V_{0}}$. As $t_{+}'V_{0} \; \eta \; \mathcal{M}$ whenever $V_{0} \; \eta \; \mathcal{M}$, this implies $e^{it'V_{0}} \in \cl{R}(V_{0}, \hdots, V_{N})$. 

By definition of $\cl{R}(V_{0}, \hdots, V_{N})$, we have $e^{isV(\frac{t}{s}y)} \in \cl{R}(V_{0}, \hdots, V_{N})$ as well. Using Lemma \ref{l_tk_nelson} (i), all of these together imply $e^{itV(y)} \in \cl{R}(V_{0}, \hdots, V_{N})$. By strong closure, this shows $e^{itV(y)} \in \cl{R}(V_{0}, \hdots, V_{N})$ for arbitrary $t \in \bb{R}$, and $y \in \bb{R}^{N}$; thus, $\cl{R}(V_{0}, \hdots, V_{N}) = \cl{G}(V_{0}, \hdots, V_{N})$.

\end{proof}
\begin{lemma}
Let $H$ be a Hilbert space, $\mathcal{M}$ a finite von Neumann algebra acting on $H$, and let $V_{0}, \hdots, V_{N}$ be self-adjoint operators on $H$ affiliated with $\mathcal{M}$. Then $\cl{R}(V_{0}, \hdots, V_{N})$ is a connected semigroup. Furthermore, if $V_{0}$ has pure-point spectrum then $\cl{G}(V_{0}, \hdots, V_{N})$ is a connected subgroup of $U(\mathcal{M})$.
\end{lemma}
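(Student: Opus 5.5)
The plan is to prove connectedness by exhibiting $\cl{R}(V_{0}, \hdots, V_{N})$ as the strong closure of a union of path-connected sets that all contain the identity, and then to obtain the second assertion directly from Proposition \ref{p_strong_reach_equals_group}.

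\textbf{Step 1: every generator lies on a path to $I$.} For fixed $y \in \bb{R}^{N}$, the lemma preceding Proposition \ref{p_strong_reach_equals_group} shows that $V(y)$ is self-adjoint and affiliated with $\mathcal{M}$. Hence $t \mapsto e^{itV(y)}$ is a strongly continuous one-parameter unitary group in $U(\mathcal{M})$. Restricting to $t \in [0, s]$ gives an SOT-continuous path from $I$ to $e^{isV(y)}$. Every point of this path with $t > 0$ lies in $\cl{R}_{0}$, and $I$ lies in the strong closure $\cl{R}$ by taking $t \to 0^{+}$, as in the proof of Proposition \ref{p_strong_reach_equals_group}.

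\textbf{Step 2: every element of $\cl{R}_{0}$ lies on a path to $I$.} A general element of $\cl{R}_{0}$ is a finite product $g = e^{it_{1}V(y^{1})}\cdots e^{it_{k}V(y^{k})}$ with all $t_{j} > 0$. Multiplication is jointly SOT-continuous on bounded sets, in particular on the unitary group. Therefore the map
\begin{gather*}
\gamma(\theta) = e^{i\theta t_{1}V(y^{1})}\cdots e^{i\theta t_{k}V(y^{k})}, \;\;\;\; \theta \in [0,1],
\end{gather*}
is an SOT-continuous path from $I$ to $g$. For every $\theta > 0$ the point $\gamma(\theta)$ lies in $\cl{R}_{0}$. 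So $\cl{R}_{0} \cup \{I\}$ is path-connected, since each point is joined to $I$ inside the set.

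\textbf{Step 3: pass to the closure.} The closure of a connected set is connected. It follows that $\cl{R} = \overline{\cl{R}_{0}\cup\{I\}}^{\rm SOT}$ is connected. This uses that $U(\mathcal{M})$ is SOT-closed because $\mathcal{M}$ is finite (the first proposition of Section \ref{sec:preliminaries}), so the closure is taken inside $U(\mathcal{M})$.

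\textbf{Step 4: the semigroup property.} We need $\cl{R}$ to be closed under products. Take $a, b \in \cl{R}$, approximated by nets $a_{\alpha} \to a$ and $b_{\beta} \to b$ in $\cl{R}_{0}$. Joint strong continuity of multiplication on the unit ball gives $a_{\alpha}b_{\beta} \to ab$, so $ab \in \cl{R}$. The second assertion is then immediate: if $V_{0}$ has pure-point spectrum, Proposition \ref{p_strong_reach_equals_group} gives $\cl{G} = \cl{R}$, which is connected by Steps 1–3 and is a subgroup by definition.

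\textbf{Main obstacle.} The delicate point is the joint strong continuity of multiplication. It fails on unbounded sets, so every argument must stay inside the uniformly bounded unitary group. There the estimate
\begin{gather*}
\|(u_{\alpha}v_{\alpha}-uv)\xi\| \leq \|(v_{\alpha}-v)\xi\| + \|(u_{\alpha}-u)v\xi\|
\end{gather*}
is what makes both Step 2 and Step 4 work.
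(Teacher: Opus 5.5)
Your argument is correct and follows the same route as the paper's proof. Connectedness of $\cl{R}_{0}$ passes to its strong closure; the paper cites this connectedness from Jurdjevic--Sussmann, while you verify it directly via the rescaled path $\gamma(\theta)$. The semigroup property of $\cl{R}$ comes from SOT-continuity of multiplication, and the second assertion is read off from Proposition \ref{p_strong_reach_equals_group}. The only difference is in that middle step: the paper uses separate continuity plus a general result on closures of subsemigroups of topological semigroups, whereas you use joint continuity on the bounded unitary group, which is a more self-contained justification.
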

\begin{proof}
By \cite[Lemma 4.4]{js}, $\cl{R}_{0}(V_{0}, \hdots, V_{N})$ is path connected, and hence connected. As the closure (in this case, inside the strong operator topology) of a connected set remains connected, we have that $\cl{R}(V_{0}, \hdots, V_{N})$ is connected. To see that $\cl{R}(V_{0}, \hdots, V_{N})$ is a semigroup, first note that multiplication by unitaries is separately continuous in the strong operator topology. This makes $\mathcal{R}_{0}(V_{0}, \hdots, V_{N})$ a (left) topological sub-semigroup inside $\mathcal{G}(V_{0}, \hdots, V_{N})$. Now, apply \cite[Proposition 1.4.10]{at} to conclude that $\mathcal{R}(V_{0}, \hdots, V_{N})$ is a semigroup. Finally, if we assume $V_{0}$ has pure-point spectrum, applying Proposition \ref{p_strong_reach_equals_group} we obtain $\cl{R}(V_{0}, \hdots, V_{N}) = \cl{G}(V_{0}, \hdots, V_{N})$ and thus the latter is a connected group. 
\end{proof}

We wish to discuss a few notions of controllability for systems of the form (\ref{eqn:schrodinger}), and how this connects to the Lie algebraic picture. We say a system is \textit{pure state controllable} if for any two unit vectors $\xi_{0}, \xi_{f} \in H$, there exists a control function and a time $T > 0$ such that the solution of (\ref{eqn:schrodinger}) with initial condition $\xi_{0}$, is $\xi(T) = \xi_{f}$. For systems which further decompose in the form (\ref{eqn_bilinear_system}), the following definition is after \cite{bms}.
\begin{definition}
Let $\mathcal{M}$ be a finite von Neumann algebra, and assume the Hamiltonians arising from (\ref{eqn_bilinear_system}) satisfy $V_{j} \in L^{0}(\mathcal{M}, \tau)$ for $j = 0, \hdots, N$. Then the system (\ref{eqn_bilinear_system}) is called strongly $\mathcal{M}$-controllable if $\cl{R}(V_{0}, \hdots, V_{N}) = U(\mathcal{M})$. 
\end{definition}

\begin{lemma}\label{lemma:factor_transitivity}
If $\mathcal{M}$ is not a factor, then $U(\mathcal{M})$ does not act transitively on the set of unit vectors in $H$. 
\end{lemma}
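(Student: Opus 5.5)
The plan is to exhibit a nontrivial orthogonal projection that is invariant under every unitary of $\mathcal{M}$, and then use it to separate two unit vectors. Since $\mathcal{M}$ is not a factor, the center $\mathcal{Z}(\mathcal{M}) = \mathcal{M}\cap\mathcal{M}'$ strictly contains $\mathbb{C}I$. A von Neumann algebra is the norm-closed span of its projections (by the spectral theorem applied to self-adjoint elements). Hence $\mathcal{Z}(\mathcal{M})$, being itself a von Neumann algebra different from $\mathbb{C}I$, contains a projection $z$ with $z\neq 0$ and $z\neq I$. Indeed, if its only projections were $0$ and $I$, then every self-adjoint central element would have trivial spectral projections and would be a scalar, forcing $\mathcal{Z}(\mathcal{M})=\mathbb{C}I$.

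Next, because $z\in\mathcal{M}'$, every $u\in U(\mathcal{M})$ commutes with $z$. Consequently $u(zH)\subseteq zH$, and likewise $u(z^{\perp}H)\subseteq z^{\perp}H$. So each unitary in $\mathcal{M}$ leaves the closed subspace $zH$ invariant, and in fact the function $\xi\mapsto \|z\xi\|$ is invariant under the action of $U(\mathcal{M})$: $\|zu\xi\| = \|uz\xi\| = \|z\xi\|$.

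Finally, since $z\neq 0$ and $z\neq I$, both $zH$ and $z^{\perp}H$ are nonzero. Choose a unit vector $\xi_{0}\in zH$ and a unit vector $\xi_{f}\in z^{\perp}H$. Then $\|z\xi_{0}\|=1$ while $\|z\xi_{f}\|=0$, so by the invariance just established no $u\in U(\mathcal{M})$ can satisfy $u\xi_{0}=\xi_{f}$. Thus the action of $U(\mathcal{M})$ on the unit sphere is not transitive.

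The only step that requires any care is the first, namely extracting a nontrivial central projection from the non-factor hypothesis. It is handled by taking a self-adjoint non-scalar central element $a$ (the real or imaginary part of a non-scalar central element) and a spectral projection $e_{a}(B)$ of $a$ with $e_{a}(B)\neq 0, I$. Such a projection lies in $W^{*}(a)\subseteq\mathcal{Z}(\mathcal{M})$, since $\mathcal{Z}(\mathcal{M})$ is a von Neumann algebra containing $a$. Everything after that is immediate.
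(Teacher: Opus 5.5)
Your proof is correct and follows the same route as the paper: a nontrivial central projection $z$ commutes with every $u\in U(\mathcal{M})$, so no unitary in $\mathcal{M}$ can carry a unit vector in $zH$ to one in $z^{\perp}H$. You also justify the existence of a nontrivial projection in $\mathcal{Z}(\mathcal{M})$ via a spectral projection of a non-scalar self-adjoint central element, and use the invariant $\|z\xi\|$ to finish; the paper takes the first point for granted and argues via invariant subspaces.
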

\begin{proof}
Let $\mathcal{Z}(\mathcal{M}) = \mathcal{M}\cap \mathcal{M}'$ denote the center of $\mathcal{M}$. With $\mathcal{M}$ faithfully represented on $H$, take a non-trivial projection $p \in \mathcal{P}(\mathcal{Z}(\mathcal{M}))$. Then for any $u \in U(\mathcal{M})$, $up = pu$, and so $pH$ and $p^{\perp}H$ are invariant subspaces. If we take $\xi \in pH, \xi' \in p^{\perp}H$ unit vectors, there does not exist a $u \in U(\mathcal{M})$ such that $u\xi = \xi'$. 
\end{proof}

\begin{corollary}
If $\mathcal{M}$ is not a factor, then the system in (\ref{eqn_bilinear_system}) is not pure state controllable. 
\end{corollary}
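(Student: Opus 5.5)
The plan is to derive the corollary directly from Lemma \ref{lemma:factor_transitivity}, by locating every time-evolution operator of the system inside $U(\mathcal{M})$. First I will check that, for piecewise constant controls $u_{j}$, the solution of (\ref{eqn:schrodinger}) with Hamiltonian (\ref{eqn_bilinear_system}) has the form $\xi(T) = W\xi_{0}$. Here $W$ is a finite product of unitaries $e^{-i t_{k} V(y^{(k)})/\hbar}$, with $t_{k} > 0$ the lengths of the intervals on which $u$ is constant and $y^{(k)} \in \mathbb{R}^{N}$ the constant control values there.

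By the lemma establishing that $V(y)$ is self-adjoint and $V(y)\,\eta\,\mathcal{M}$, Lemma \ref{lemma:closure_under_borel} applies with $f(\lambda) = e^{-it\lambda/\hbar}$. Hence each factor $e^{-itV(y)/\hbar}$ is a bounded operator affiliated with $\mathcal{M}$, and therefore lies in $\mathcal{M}$ and is a unitary there. It follows that $W \in \mathcal{R}_{0}(V_{0},\hdots,V_{N}) \subseteq U(\mathcal{M})$, after absorbing the factor $-1/\hbar$ into the time and $y$ parameters, or equivalently reasoning with the group $\mathcal{G}$. This step is the only point that needs care, namely justifying that the piecewise-constant dynamics is exactly this product of exponentials. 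Since each generator is self-adjoint by the preceding lemma, Stone's theorem gives the propagator on each interval, and composing over the finitely many intervals gives $W$.

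Next, since $\mathcal{M}$ is not a factor, I take a nontrivial central projection $p$ as in the proof of Lemma \ref{lemma:factor_transitivity}. I then choose unit vectors $\xi_{0} \in pH$ and $\xi_{f} \in p^{\perp}H$. Every $W \in U(\mathcal{M})$ commutes with $p$, so $W\xi_{0} = Wp\xi_{0} = pW\xi_{0} \in pH$. Because $pH \perp p^{\perp}H$ and $\xi_{f} \neq 0$, we have $W\xi_{0} \neq \xi_{f}$ for every admissible piecewise-constant control and every $T>0$. Therefore $\xi_{f}$ is never reached from $\xi_{0}$, and the system is not pure state controllable.

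If one also wants to allow the strong closure, meaning limits of such propagators, the same argument works. The set $\{W \in U(\mathcal{M}) : Wp = pW\}$ is all of $U(\mathcal{M})$, and $U(\mathcal{M})$ is strongly closed because $\mathcal{M}$ is finite, by the proposition above. So even strong limits of propagators keep $pH$ invariant.
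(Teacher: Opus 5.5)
Your proposal is correct and follows the paper's argument. The propagators for piecewise-constant controls are finite products of unitaries $e^{-itV(y)/\hbar}\in U(\mathcal{M})$, and a nontrivial central projection $p$ makes $pH$ invariant under all of $U(\mathcal{M})$, exactly as in Lemma~\ref{lemma:factor_transitivity}; your write-up supplies details the paper leaves implicit. One small slip is harmless: the sign of the drift cannot be absorbed into $y$, so $W$ need not lie in $\mathcal{R}_{0}(V_{0},\hdots,V_{N})$ as defined, but the argument only needs $W\in U(\mathcal{M})$, which you prove directly.
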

\begin{proof}
As any solution to (\ref{eqn:schrodinger}) with initial condition $\xi_{0}$ and Hamiltonian of the form (\ref{eqn_bilinear_system}) can be written as $\xi(t) = u(t)\xi_{0}$ with $u(t) \in U(\mathcal{M})$ over some time interval $[0, T]$, even if $\mathcal{R}(V_{0}, \hdots, V_{N}) = U(\mathcal{M})$ by Lemma \ref{lemma:factor_transitivity} $\mathcal{R}(V_{0}, \hdots, V_{N})$ does not act transitively on the unit sphere of $H$. Hence, the system is not pure-state controllable.
\end{proof}

\begin{remark}\label{remark:controllability_differences}
\rm In order to better understand potential differences for control between the infinite-dimensional and the finite-dimensional settings, we briefly recall analogues of controllability for the latter; proofs may be found in \cite[Chapter 3]{daless}. 

If our Hilbert space $H$ is finite-dimensional, then $H \cong \mathbb{C}^{n}$ for some $n \in \mathbb{N}$. All Hamiltonians acting on $H$ are now automatically bounded and in $M_{n}(\mathbb{C})$. Here, there is (essentially) only one choice of von Neumann algebra with $\mathcal{M}= M_{n}(\mathbb{C})$, and by boundedness the affiliation relation trivializes. This is a finite type ${\rm I}_{n}$ factor, with the standard normalized matrix trace ${\rm Tr}: M_{n}(\mathbb{C})\rightarrow \mathbb{C}$. The definition of pure state controllability does not change; as the strong and norm operator topologies coincide on $M_{n}(\mathbb{C})$, strong $\mathcal{M}$-controllability reduces to \textit{operator controllability}, where $\mathcal{R}(V_{0}, \hdots, V_{N}) = U(n)$ (or even $SU(n)$, depending on the dimension of the dynamical Lie algebra). Furthermore, if we assume the system is an ensemble and the dynamics is described by a Liouville-von Neumann equation $i\hbar\frac{d}{dt}\rho(t) = [V(t), \rho(t)]$ for the density matrix $\rho(t)$, then for any two unitarily equivalent density matrices $\rho_{0}, \rho_{f}$ we say that the system is \textit{density matrix controllable} if there exists a control $V(t)$ and some time $T > 0$ such that the solution $\rho(t)$ to the dynamics with initial condition $\rho_{0}$ satisfies $\rho(T) = \rho_{f}$. As one may show, density matrix controllability is equivalent to operator controllability (strong $\mathcal{M}$-controllability in this setting), and these imply pure state controllability. 

To contrast with our results in the infinite-dimensional setting, we already see how the analogue of operator controllability no longer implies pure state controllability in most cases. Additionally, we would not expect strong $\mathcal{M}$-controllability to coincide with controllability between arbitrary densities $\rho_{0}, \rho_{f} \in \mathcal{T}(H)^{+}$ (or even $L^{1}(\mathcal{M}, \tau)^{+}$); this would be asking for arbitrary density conversion using a select subset of unitarily implementable operations, coming from a subalgebra $\mathcal{M}\subseteq \mathcal{B}(H)$.
\end{remark}

While the previous remark shows relations between various notions of controllability no longer hold in the same way as in the finite-dimensional setting, under minor assumptions the following shows we can recover a version of the Lie Algebra Rank Condition (see \cite[Theorem 3.2.1]{daless}). Recall that, as $\mathcal{G}(V_{0}, \hdots, V_{N}) \subseteq U(\mathcal{M})$ is a strongly closed subgroup, corresponding Lie algebra $\mathfrak{g}(V_{0}, \hdots, V_{N})$ is well-defined inside $L^{0}(\mathcal{M}, \tau)$ (see Section \ref{sec:preliminaries}). 

\begin{theorem}\label{t_larc}
Suppose $V_{0}, \hdots, V_{N} \in L^{0}(\mathcal{M}, \tau)$. Then
\begin{gather*}
    \langle iV_{0}, \hdots, iV_{N}\rangle_{{\rm Lie}, \bb{R}} = \mathfrak{g}(V_{0}, \hdots, V_{N}),
\end{gather*}
\noindent where the former is considered as a real, SRT-closed Lie subalgebra inside $L^{0}(\mathcal{M}, \tau)$.  
\end{theorem}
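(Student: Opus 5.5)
We interpret $\langle iV_{0}, \hdots, iV_{N}\rangle_{{\rm Lie}, \bb{R}}$ as the SRT-closure inside $L^{0}(\mathcal{M}, \tau)$ of the real Lie algebra generated by $iV_0,\dots,iV_N$. Brackets are taken with the Murray--von Neumann operations of Theorem \ref{th:aff_op_algebra}, so every iterated bracket is a well-defined skew-adjoint element of $L^{0}(\mathcal{M}, \tau)$. The proof then establishes two inclusions. Throughout we rely on two structural facts about $\mathfrak{g}(V_{0}, \hdots, V_{N}) = {\rm Lie}(\mathcal{G})$, where $\mathcal{G}=\mathcal{G}(V_0,\dots,V_N)$:
\begin{itemize}
\item[(a)] by \cite[Lemma 4.3]{am_one}, ${\rm Lie}(\mathcal{G})$ is a real Lie algebra in $L^{0}(\mathcal{M},\tau)$, closed under $\overline{a+b}$ and $\overline{ab-ba}$;
\item[(b)] ${\rm Lie}(\mathcal{G})$ is SRT-closed.
\end{itemize}

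For the inclusion ``$\subseteq$'', note that $e^{t(iV_j)}\in\mathcal{G}$ for all $t\in\bb{R}$ by the definition of $\mathcal{G}$ (via the remark identifying it with the group generated by the $e^{itV_j}$). Hence $iV_j\in{\rm Lie}(\mathcal{G})$, and by (a) the whole Lie algebra they generate lies in ${\rm Lie}(\mathcal{G})$.

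The mechanism behind (a) is Lemma \ref{l_tk_nelson}. In $L^{0}(\mathcal{M},\tau)$, the closures $\overline{a+b}$ and $\overline{ab-ba}$ of skew-adjoint affiliated operators are again skew-adjoint. The common domain is $\tau$-dense by Remark \ref{rem:completely_dense}, and essential skew-adjointness on the intersections of domains follows from \cite[Lemma 16.4.2]{mvn}. The Trotter and commutator product formulas then express $e^{t\overline{(a+b)}}$ and $e^{t[a,b]}$ as strong limits of products of elements of $\mathcal{G}$, and strong closedness of $\mathcal{G}$ puts these limits in $\mathcal{G}$. For (b): if $a_n\to a$ in SRT with $a_n$ skew-adjoint, then $e^{ta_n}\to e^{ta}$ strongly (Trotter--Kato), so $e^{ta}\in\mathcal{G}$ and $a\in{\rm Lie}(\mathcal{G})$. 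This gives $\overline{\langle iV_j\rangle}^{\rm SRT}\subseteq{\rm Lie}(\mathcal{G})$.

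For the inclusion ``$\supseteq$'', set $\mathfrak{h}:=\langle iV_{0}, \hdots, iV_{N}\rangle_{{\rm Lie}, \bb{R}}$, which is SRT-closed by definition. By the same Trotter and commutator arguments, the set $\mathcal{G}_\mathfrak{h}$ of strong limits of finite products $e^{a_1}\cdots e^{a_k}$ with $a_i\in\mathfrak{h}$ is a strongly closed subgroup of $U(\mathcal{M})$; here we use finiteness of $\mathcal{M}$, since by the earlier Proposition, strong limits of unitaries in $\mathcal{M}$ are again unitaries. This subgroup contains every $e^{itV_j}$, so $\mathcal{G}\subseteq\mathcal{G}_\mathfrak{h}$. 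Conversely $\mathcal{G}_\mathfrak{h}\subseteq\mathcal{G}$ by the ``$\subseteq$'' direction, so $\mathcal{G}=\mathcal{G}_\mathfrak{h}$.

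It remains to show ${\rm Lie}(\mathcal{G}_\mathfrak{h})\subseteq\mathfrak{h}$, and this is the main obstacle: the infinite-dimensional analogue of the fact that a Lie subalgebra is the Lie algebra of the group it generates. Take $a\in{\rm Lie}(\mathcal{G})$. We would approximate $e^{ta}$, for small $t$, by products $g_n=e^{b_{n,1}}\cdots e^{b_{n,k_n}}$ with $b_{n,i}\in\mathfrak{h}$. Using a Baker--Campbell--Hausdorff-type recombination in the complete topological $*$-algebra $L^{0}(\mathcal{M},\tau)$, or a logarithm $\log(g_n)$ defined by Borel functional calculus (Lemma \ref{lemma:closure_under_borel}), we would show that suitable elements $c_n\in\mathfrak{h}$ satisfy $c_n/t\to a$ in SRT. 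SRT-closedness of $\mathfrak{h}$ then gives $a\in\mathfrak{h}$. We expect controlling this convergence, in particular the behaviour of spectral logarithms under strong convergence, to be the genuinely delicate step. We would try first to import the corresponding result for SRT-closed Lie subalgebras of $\mathfrak{u}(\mathcal{M})$ from \cite{am_one}, and fall back on the explicit BCH argument only if that is not available.
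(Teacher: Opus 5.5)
Your inclusion $\langle iV_0,\dots,iV_N\rangle_{{\rm Lie},\mathbb{R}}\subseteq\mathfrak{g}(V_0,\dots,V_N)$ is fine and is essentially the paper's argument. The paper feeds in the generators $iV(\hat y)$ with $\hat y\in\{\pm1\}^N$, while you feed in the $iV_j$ directly; both rest on ${\rm Lie}(\mathcal{G})$ being an SRT-closed real Lie algebra \cite{am_one}. The gap is the step you flag yourself, ${\rm Lie}(\mathcal{G}_{\mathfrak h})\subseteq\mathfrak h$, i.e.\ that the Lie algebra of the strong closure of the group generated by $\exp(\mathfrak h)$ is again $\mathfrak h$. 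This step is not merely delicate; it is false, already in finite dimensions, because closing up a subgroup can enlarge its Lie algebra (think of the irrational winding on a torus). So no BCH or logarithm argument, and no result imported from \cite{am_one}, can supply it. The inclusion $\mathfrak g\subseteq\mathfrak h$ fails in general.

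Here is a concrete counterexample.
\begin{itemize}
\item Let $K$ be a separable infinite-dimensional Hilbert space and $H=K\oplus K$.
\item Let $\mathcal{M}=\{x_1I_K\oplus x_2I_K:\ x_1,x_2\in\mathbb{C}\}\cong\mathbb{C}^2$, with the faithful normal tracial state $\tau(x_1I_K\oplus x_2I_K)=\tfrac12(x_1+x_2)$.
\item Take $V_0=I_K\oplus\alpha I_K$ with $\alpha\notin\mathbb{Q}$, and $V_1=0$.
\end{itemize}
Then $\mathfrak h=\mathbb{R}\,iV_0$ is one-dimensional, hence SRT-closed. However, $U(\mathcal{M})\cong\mathbb{T}^2$, since the strong and norm topologies agree on $\mathcal{M}$. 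By Kronecker's theorem, $\{e^{itV_0}\}_{t\in\mathbb{R}}=\{e^{it}I_K\oplus e^{i\alpha t}I_K\}_{t\in\mathbb{R}}$ is dense in $U(\mathcal{M})$. Hence $\mathcal{G}(V_0,V_1)=U(\mathcal{M})$, and $\mathfrak{g}(V_0,V_1)=\mathfrak{u}(\mathcal{M})=\{i(s_1I_K\oplus s_2I_K):\ s_1,s_2\in\mathbb{R}\}$ is two-dimensional.

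Your recombination step visibly breaks here. The only available products are $e^{isV_0}$. For $a\notin\mathfrak h$ and all but countably many small $t$, approximating $e^{ta}$ by $e^{is_nV_0}$ forces $|s_n|\to\infty$. So the elements $s_n iV_0\in\mathfrak h$ diverge, while the principal logarithms of $e^{is_nV_0}$, which do converge to $ta$, do not lie in $\mathfrak h$.

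The paper's own proof has the same hole. It asserts $\mathcal{G}\subseteq e^{\mathfrak h}$ and reads off $\mathfrak g\subseteq\mathfrak h$, but here $e^{\mathfrak h}$ is only the dense winding line, a proper subset of $\mathcal{G}\cong\mathbb{T}^2$. What holds in general is $\mathfrak h\subseteq\mathfrak g$. Equality needs an additional hypothesis, for instance that the subgroup generated by $\exp(\mathfrak h)$ is already strongly closed. The same example also defeats the ``only if'' direction of the corollary following the theorem: $V_0$ has pure point spectrum and the system is strongly $\mathcal{M}$-controllable, yet $\mathfrak h\neq\mathfrak u(\mathcal{M})$.
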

\begin{proof}
Note that as $\mathcal{G}(V_{0}, \hdots, V_{N})$ is the smallest strongly closed subgroup of $U(\mathcal{M})$ containing all elements of the form $e^{it V(y)}$ with $t \in \mathbb{R}$ and $y \in \mathbb{R}^{N}$, and as $itV(y) \in \langle iV_{0}, \hdots, iV_{N}\rangle_{{\rm Lie}, \mathbb{R}}$ for all $t \in \mathbb{R}, y \in \mathbb{R}^{N}$, we have the inclusion $\mathcal{G}(V_{0}, \hdots, V_{N})$ $\subseteq e^{\langle iV_{0}, \hdots, iV_{N}\rangle_{{\rm Lie}, \mathbb{R}}}$, which implies $\mathfrak{g}(V_{0}, \hdots, V_{N}) \subseteq \langle iV_{0}, \hdots, iV_{N}\rangle_{{\rm Lie}, \mathbb{R}}$. 

For the other containment, pick $y_{1}, \hdots, y_{N} \in \mathbb{R}$ such that $y_{j} = \pm 1$ for all $j = 1, \hdots, N$ and call the corresponding vector $\hat{y} = (y_{1}, \hdots, y_{N}) \in \mathbb{R}^{N}$. Clearly, $e^{itV(\hat{y})} \in \mathcal{G}(V_{0}, \hdots, V_{N})$ for all $t \in \mathbb{R}$, with $iV(\hat{y})$ skew-adjoint. By definition, this means $iV(\hat{y}) \in \mathfrak{g}(V_{0}, \hdots, V_{N})$ for all such $\hat{y} \in \mathbb{R}^{N}$. As elements of the form $iV(\hat{y})$ as previously described generate $\langle iV_{0}, \hdots, iV_{N}\rangle_{{\rm Lie}, \mathbb{R}}$ inside $L^{0}(\mathcal{M}, \tau)$, this shows $\langle iV_{0}, \hdots, iV_{N}\rangle_{{\rm Lie}, \mathbb{R}} \subseteq \mathfrak{g}(V_{0}, \hdots, V_{N})$, completing the proof. 
\end{proof}
\begin{remark}
\rm The latter part of the argument is adapted from the well-known result \cite[Theorem 4.6]{js}, where we view the decomposition in (\ref{eqn_bilinear_system}) as the analogue of a right-invariant system over Lie group $U(\mathcal{M})$. 
\end{remark}

\begin{corollary}
Suppose $V_{0}, \hdots, V_{N} \in L^{0}(\mathcal{M}, \tau)$ and $V_{0}$ has pure-point spectrum. Then strong $\mathcal{M}$-controllability is equivalent to $\langle iV_{0}, \hdots, iV_{N}\rangle_{{\rm Lie}, \mathbb{R}} = \mathfrak{u}(\mathcal{M})$. 
\end{corollary}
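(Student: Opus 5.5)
The plan is to chain Proposition \ref{p_strong_reach_equals_group} with Theorem \ref{t_larc}, using the Lie correspondence between strongly closed subgroups of $U(\mathcal{M})$ and their Lie algebras inside $L^{0}(\mathcal{M}, \tau)$. Since $V_{0}$ has pure-point spectrum, Proposition \ref{p_strong_reach_equals_group} gives $\cl{R}(V_{0}, \hdots, V_{N}) = \cl{G}(V_{0}, \hdots, V_{N})$. Strong $\mathcal{M}$-controllability, $\cl{R}(V_{0}, \hdots, V_{N}) = U(\mathcal{M})$, is therefore equivalent to $\cl{G}(V_{0}, \hdots, V_{N}) = U(\mathcal{M})$. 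By Theorem \ref{t_larc}, $\mathfrak{g}(V_{0}, \hdots, V_{N}) = \langle iV_{0}, \hdots, iV_{N}\rangle_{{\rm Lie}, \bb{R}}$. It thus remains to show that, for the strongly closed subgroup $G = \cl{G}(V_{0}, \hdots, V_{N})$, we have $G = U(\mathcal{M})$ if and only if ${\rm Lie}(G) = \mathfrak{u}(\mathcal{M})$.

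The forward direction is immediate from the definition of ${\rm Lie}(\cdot)$. If $G = U(\mathcal{M})$, then ${\rm Lie}(G)$ is exactly the set of skew-adjoint $a$ with $e^{ta} \in U(\mathcal{M})$ for all $t$, which is $\mathfrak{u}(\mathcal{M})$.

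For the converse, suppose ${\rm Lie}(G) = \mathfrak{u}(\mathcal{M})$ and take $u \in U(\mathcal{M})$. I would use the Borel functional calculus to write $u = e^{ia}$ with $a$ self-adjoint and bounded. A concrete choice is a Borel branch of the argument, $a = f(u)$ with $f \colon \mathbb{T} \to (-\pi, \pi]$. Since $u$ is normal and lies in $\mathcal{M}$, we have $W^{*}(u) \subseteq \mathcal{M}$. As in Lemma \ref{lemma:closure_under_borel}, $a$ is then affiliated with $\mathcal{M}$, and being bounded it lies in $\mathcal{M}$. Hence $ia \in \mathfrak{u}(\mathcal{M}) = {\rm Lie}(G)$. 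By the definition of ${\rm Lie}(G)$ this gives $e^{tia} \in G$ for all $t$, and in particular $u = e^{ia} \in G$. So $G = U(\mathcal{M})$. This step needs no surjectivity of the exponential on any larger group: every unitary in a von Neumann algebra is the exponential of a bounded skew-adjoint element of the algebra.

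The main point to check is that the identifications are made in the same ambient space. The Lie algebra $\langle iV_{0}, \hdots, iV_{N}\rangle_{{\rm Lie}, \bb{R}}$ in Theorem \ref{t_larc} is taken as an SRT-closed real Lie subalgebra of $L^{0}(\mathcal{M}, \tau)$. The space $\mathfrak{u}(\mathcal{M}) = {\rm Lie}(U(\mathcal{M}))$ consists of the skew-adjoint elements of $L^{0}(\mathcal{M}, \tau)$, by \cite[Lemma 4.3]{am_one} and the fact that $e^{ta} \in \mathcal{M}$ for every skew-adjoint affiliated $a$. Once both sides are read in $L^{0}(\mathcal{M}, \tau)$, the equivalence follows by composing the three steps above.
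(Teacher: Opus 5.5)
Your proof is correct. The reduction to $\mathcal{G}(V_{0}, \hdots, V_{N}) = U(\mathcal{M})$ via Proposition~\ref{p_strong_reach_equals_group} and Theorem~\ref{t_larc} matches the paper, and so does the forward direction. The converse is where you differ.

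\textbf{The paper's converse.} It passes from $\mathfrak{g}(V_{0}, \hdots, V_{N}) = \mathfrak{u}(\mathcal{M})$ to $\mathcal{G}(V_{0}, \hdots, V_{N}) = U(\mathcal{M})$ by citing \cite[Corollary 4.13]{am_one}. That is the Lie correspondence between strongly closed subgroups and their Lie algebras: the strongly closed subgroup attached to $\mathfrak{u}(\mathcal{M})$ is $U(\mathcal{M})$ itself.

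\textbf{Your converse.} You argue directly. Write $u = e^{ia}$ with $a = f(u) \in W^{*}(u) \subseteq \mathcal{M}$ bounded self-adjoint. Then $ia \in \mathfrak{u}(\mathcal{M}) = \mathrm{Lie}(G)$, so $u \in G$ by the definition of $\mathrm{Lie}(G)$ alone.

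\textbf{Comparison.} Your route is self-contained and needs no external correspondence result. It also proves slightly more: you only use that $\mathfrak{g}(V_{0}, \hdots, V_{N})$ contains the bounded skew-adjoint elements of $\mathcal{M}$. The price is that it relies on a special feature of the full unitary group, namely that $U(\mathcal{M})$ is exhausted by exponentials of bounded skew-adjoint elements of $\mathcal{M}$. The paper's citation instead places the step inside the general subgroup--subalgebra framework of \cite{am_one}, which is the natural setting for other target groups.

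Two small remarks:
\begin{enumerate}
\item The forward direction does not need the pure-point hypothesis, since $\mathcal{R} \subseteq \mathcal{G} \subseteq U(\mathcal{M})$ always holds.
\item $f(u) \in \mathcal{M}$ follows directly from the bounded Borel calculus for the normal operator $u$. You don't need Lemma~\ref{lemma:closure_under_borel}, which as stated covers only self- or skew-adjoint operators.
\end{enumerate}
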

\begin{proof}
By Proposition \ref{p_strong_reach_equals_group}, $\mathcal{R}(V_{0}, \hdots, V_{N}) = \mathcal{G}(V_{0}, \hdots, V_{N})$. The strong $\mathcal{M}$-controllability of the system means $\mathcal{G}(V_{0}, \hdots, V_{N}) = U(\mathcal{M})$, and so by Theorem \ref{t_larc} we have $\langle iV_{0}, \hdots, iV_{N}\rangle_{{\rm Lie}, \mathbb{R}} = \mathfrak{u}(\mathcal{M})$.

If we instead assume $\langle iV_{0}, \hdots, iV_{N}\rangle_{{\rm Lie}, \bb{R}} = \frak{u}(\mathcal{M})$, using \cite[Corollary 4.13]{am_one} we may conclude that the strongly-closed Lie subgroup corresponding to $\langle iV_{0}, \hdots, iV_{N}\rangle_{{\rm Lie}, \bb{R}}$ is $U(\mathcal{M})$ itself. Thus, $\cl{G}(V_{0}, \hdots, V_{N}) = U(\mathcal{M})$; however, another application of Proposition \ref{p_strong_reach_equals_group} shows we have strong $\mathcal{M}$-controllability.
\end{proof}

\begin{remark}
\rm  An often used tool in the geometric control theory approach to finite-dimensional quantum systems--- say, when $H$ is of dimension $n$--- is the compactness of the $n\times n$ matrix Lie group $U(n)$. In lifting to the infinite-dimensional setting, this is no longer a property we can expect. Indeed: by \cite[Lemma 2.6]{chirv}, $U(\mathcal{M})$ is compact in the strong topology if and only if $\mathcal{M}$ is hereditarily atomic: that is, $\mathcal{M}\cong \prod_{i \in I}^{W^{*}}\mathcal{B}(H_{i}),$
\noindent where $H_{i}$ is a finite-dimensional Hilbert space for $i \in I$ and the product is taken in the category of von Neumann algebras. Many interesting natural examples do not satisfy this condition even if $\mathcal{M}$ is finite: for instance, if $\mathcal{M}\cong L^{\infty}[0, 1]$, or when $\mathcal{M}\cong \mathcal{R}$, the hyperfinite ${\rm II}_{1}$ factor. 
\end{remark}

\subsection{Approximation schemes for the drift term}\label{subsec:approximation_schemes}
As previous results indicate, certain arguments become simpler (or are only possible) when the drift term $V_{0}$ has pure-point spectrum. Additionally, under this assumption the drift term is amenable to Galerkin-type approximations which aid in control of the system (see \cite{bgrs, cmsb}). However, such an assumption is not always possible to expect (see for instance \cite{bcc, lb}). In this subsection, we propose a scheme for approximating the drift term $V_{0}$ when it does not possess pure-point spectrum. Throughout, we make the further assumption that $(\mathcal{M}, \tau)$ is injective, and (abusing notation slightly) identify $V_{0} \equiv iV_{0}$, so that the drift term is now a skew-adjoint operator. Additionally, we must assume $\mathcal{M}$ is faithfully represented on $L^{2}(\mathcal{M}, \tau)$ in the standard representation; the existence of a net of projections used in the approximation is not guaranteed otherwise. 

In \cite{gv_one, gv_two}, the authors introduced a method for approximating a potentially unbounded, skew-adjoint operator which possesses mixed or continuous spectrum. In this approach, they use kernel integral operators to construct a family of skew-adjoint operators with compact resolvent (and thus, having pure-point spectrum) whose spectral measures converge in an appropriate asymptotic limit. Inspired by this approach, we introduce an alternate method for the approximation of $V_{0} \in L^{0}(\mathcal{M}, \tau)$ (in the case when it possesses mixed or continuous spectrum) by a family of skew-adjoint operators, also with compact resolvent which furthermore are affiliated with $\mathcal{M}$. As mentioned, for the construction of the approximating operators we assume we have represented all operators on the Hilbert space $L^{2}(\mathcal{M}, \tau)$; a discussion on how this is a natural assumption for our suggested method, along with the the practicality of the approximation, will be continued in Section \ref{sec:classical_systems}. 

\begin{lemma}\label{lem:conditional_expectations}
Let $\mathcal{M}$ be an injective von Neumann algebra with a faithful, normal tracial state $\tau$. There exists a net $(E_{F})_{F \in \Lambda}$ of trace-preserving conditional expectations on $\mathcal{M}$ for which $E_{F}(\mathcal{M})$ is finite-dimensional, and the net $(E_{F})_{F \in \Lambda}$ converges in the strong operator topology to $I_{L^{2}(\mathcal{M}, \tau)}$.
\end{lemma}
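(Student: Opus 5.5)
We will combine Connes' theorem with the standard construction of trace-preserving conditional expectations onto finite-dimensional subalgebras. Since $H$ is separable, $\mathcal{M}_{\ast}$ is separable, so by Connes' theorem injectivity of $\mathcal{M}$ gives approximate finite-dimensionality. Concretely, there is an increasing sequence $\mathcal{A}_{1}\subseteq \mathcal{A}_{2}\subseteq \cdots$ of finite-dimensional unital $*$-subalgebras of $\mathcal{M}$ whose union is SOT-dense in $\mathcal{M}$. (If one prefers a net without separability, take $\Lambda$ to be the directed set of finite-dimensional unital $*$-subalgebras $F\subseteq\mathcal{M}$ ordered by inclusion.) For each such $F$, we then build $E_{F}:\mathcal{M}\to F$ as the unique $\tau$-preserving conditional expectation.

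For existence, we view $L^{2}(F,\tau|_{F})$ as the closed (finite-dimensional) subspace $\overline{\widehat{F}}=\widehat{F}\subseteq L^{2}(\mathcal{M},\tau)$, and let $e_{F}$ be the orthogonal projection onto it. For $x\in\mathcal{M}$, define $E_{F}(x)$ by $\widehat{E_{F}(x)} = e_{F}\hat{x}$. Because $F$ is finite-dimensional, every vector of $\widehat{F}$ is $\hat{y}$ for a unique $y\in F$, by faithfulness of $\tau$, so $E_{F}$ is well defined. The identity $\tau(E_{F}(x)y)=\tau(xy)$ for $y\in F$ then characterizes $E_{F}$, and we check the three axioms from this identity. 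Idempotence onto $F$ is clear. The bimodule property follows from traciality: $\tau(E_{F}(n_{1}xn_{2})y)=\tau(xn_{2}yn_{1})=\tau(n_{1}E_{F}(x)n_{2}y)$. Positivity follows since $\tau(E_{F}(x)p)=\tau(pxp)\geq0$ for $x\geq 0$ and all projections $p\in F$, together with the fact that self-adjointness is preserved. Trace preservation is the case $y=1$. Alternatively, one can cite the standard result (Umegaki/Takesaki) for von Neumann subalgebras of finite tracial algebras.

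The main step, and likely the only real obstacle, is convergence $E_{F}\to I$. Since the lemma is phrased on $L^{2}(\mathcal{M},\tau)$, we interpret it as $e_{F}\to I_{L^{2}(\mathcal{M},\tau)}$ strongly, where $e_{F}$ is the $L^{2}$-extension of $E_{F}$. The projections $e_{F}$ increase with $F$, so they converge strongly to the projection onto $\overline{\bigcup_{F}\widehat{F}}$, and it suffices to show this union is dense. Given $x\in\mathcal{M}$, the Kaplansky density theorem applied to the SOT-dense $*$-algebra $\bigcup_{n}\mathcal{A}_{n}$ gives a bounded net $y_{\alpha}\to x$ strongly. Then
\[
\|\hat{x}-\hat{y}_{\alpha}\|_{2}=\|(x-y_{\alpha})\xi_{\tau}\|\to 0 .
\]
Hence $\widehat{\mathcal{M}}\subseteq\overline{\bigcup_{F}\widehat{F}}$, and $\widehat{\mathcal{M}}$ is dense in $L^{2}(\mathcal{M},\tau)$ by construction of the GNS space. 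This is precisely where the standard representation is used, since we need $\xi_{\tau}$ to be cyclic. Finally, we deduce the operator-level statement $\|E_{F}(x)-x\|_{2}=\|e_{F}\hat{x}-\hat{x}\|\to0$, i.e.\ $E_{F}(x)\to x$ in the $\|\cdot\|_{2}$ topology, which agrees with SOT on bounded sets because $\|E_{F}(x)\|\le\|x\|$.
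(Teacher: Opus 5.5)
Your main argument is correct under the paper's standing assumption that $H$ is separable (so $\mathcal{M}_{\ast}$ is separable). The lemma as stated does not carry that hypothesis, though, and the proof is organized differently from the paper's.

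\emph{Your route.} You apply Connes' theorem once, to $\mathcal{M}$ itself, and get a single nested sequence $\mathcal{A}_{1}\subseteq\mathcal{A}_{2}\subseteq\cdots$. You build the trace-preserving expectations by hand as compressions of the $L^{2}$-projections $e_{n}$, which is self-contained and easy for finite-dimensional ranges. Convergence then comes from monotonicity: $e_{n}$ increases to the projection onto $\overline{\bigcup_{n}\widehat{\mathcal{A}_{n}}}$, and this is all of $L^{2}(\mathcal{M},\tau)$ because every $\hat{x}$ lies in that closure.

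\emph{The paper's route.} The paper indexes the net by finite subsets $F\subseteq\mathcal{M}$ and applies Connes only to $W^{*}(F)$. That algebra has separable predual regardless of $H$, since it is countably generated and carries the faithful normal state $\tau|_{W^{*}(F)}$. It then chooses a finite-dimensional $\mathcal{M}_{F}\subseteq W^{*}(F)$ within $\|\cdot\|_{2}$-distance $1/|F|$ of each element of $F$, and cites Brown--Ozawa for $E_{F}$. The conclusion follows from $\|E_{F}(a)-a\|_{2}=d(a,\mathcal{M}_{F})$.

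Your approach gives a sequence with monotone projections and no $\varepsilon$-bookkeeping. The paper's gives the lemma exactly as stated, with no global separability hypothesis, at the cost of a non-monotone net.

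Your parenthetical fallback for the non-separable case does not work. The finite-dimensional unital $*$-subalgebras of $\mathcal{M}$ are in general not directed under inclusion. Take projections $p,q\in\mathcal{M}$ with $pqp$ of infinite spectrum; such pairs exist in the hyperfinite II$_{1}$ factor, e.g.\ inside a unital copy of $M_{2}(L^{\infty}[0,1])$. Then $\mathbb{C}p+\mathbb{C}p^{\perp}$ and $\mathbb{C}q+\mathbb{C}q^{\perp}$ lie in no common finite-dimensional subalgebra, because the powers $(pqp)^{n}$, $n\geq 1$, are linearly independent.

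So that index set does not define a net, and your monotonicity step has nothing to act on. You would also still need Connes-type input to know the union is dense. To prove the lemma without separability, use the paper's device of indexing by finite subsets and approximating inside $W^{*}(F)$. Otherwise, drop the parenthetical and state explicitly that you rely on the paper's standing separability assumption.
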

\begin{proof}
This mainly follows from the existence of a faithful, normal and trace-preserving conditional expectation onto any subalgebra of $\mathcal{M}$ (see \cite[Lemma 1.5.11]{bo}). If $\mathcal{N}\subseteq \mathcal{M}$ is a subalgebra, as $\mathcal{M}$ is injective the existence of a faithful normal conditional expectation $E_{\mathcal{N}}: \mathcal{M}\rightarrow \mathcal{N}$ ensures that $\mathcal{N}$ is injective as well; in particular, we may consider $E_{\mathcal{N}}: \mathcal{M}\rightarrow \mathcal{N}$ as the restriction to $\mathcal{M}$ of the orthogonal projection $e_{\mathcal{N}}$  of $L^{2}(\mathcal{M}, \tau)$ onto $L^{2}(\mathcal{N}, \tau)$, when $\mathcal{M}, \mathcal{N}$ are identified as subspaces in $L^{2}(\mathcal{M}, \tau)$ and $L^{2}(\mathcal{N}, \tau)$, respectively. If $F \subseteq \mathcal{M}$ is a finite subset of elements, then $W^{*}(F)$ is an injective von Neumann algebra; furthermore, as it has a separable predual results from \cite{connes} show that $W^{*}(F)$ is AFD. Thus, for any finite subset $F \subseteq \mathcal{M}$ there exists a finite-dimensional $*$-subalgebra $\mathcal{M}_{F} \subseteq \mathcal{M}$ with $d(a, \mathcal{M}_{F}) < 1/|F|$ for all $a\in F$ (where we consider the distance under the norm induced by $\tau$). Applying the existence theorem once more, we obtain a faithful, normal trace-preserving conditional expectation $E_{F}: \mathcal{M}\rightarrow \mathcal{M}_{F}$; then $\|E_{F}(a)-a\|_{\tau} = d(a, \mathcal{M}_{F})$. As the latter converges to $0$ as $|F| \rightarrow \infty$, this implies $(E_{F})_{F \in \Lambda}$ is a net of conditional expectations which form a strong approximate identity on $L^{2}(\mathcal{M}, \tau)$.  
\end{proof}

For $z > 0$, we begin by obtaining a bounded, skew-adjoint transformation of the drift term $V_{0}$ using the bounded, continuous, antisymmetric function $q_{z}: i\mathbb{R}\rightarrow \mathbb{C}$,
\begin{gather*}
    q_{z}(i\omega) = \frac{i\omega}{z^{2}+\omega^{2}}, \;\;\;\; {\rm rng}(q_{z}) = i\bigg[-\frac{1}{2z}, \frac{1}{2z}\bigg],
\end{gather*}
\noindent via the Borel functional calculus:
\begin{gather*}
    Q_{z} := q_{z}(V_{0}) \equiv R_{z}^{*}V_{0}R_{z}, \;\;\;\; Q_{z} \in \mathcal{B}(H), \;\;\;\; Q_{z}^{*} = -Q_{z},
\end{gather*}
\noindent where $R_{z} = R(z, V_{0})$. While $q_{z}$ is not invertible, its restriction $\tilde{q}_{z}$ on the subset $i\Omega_{z} := i(-\infty, -z]\cup i[z, \infty)$ of $i\mathbb{R}$ has inverse $\tilde{q}_{z}^{-1}: \bigg[-\frac{1}{2z}, \frac{1}{2z}\bigg]\setminus \{0\} \rightarrow i\mathbb{R}$ given by
\begin{gather*}
    \tilde{q}_{z}^{-1}(i\omega) = i\frac{1+\sqrt{1-4z^{2}\omega^{2}}}{2\omega}.
\end{gather*}
\noindent By injectivity of $\mathcal{M}$, take a net $(E_{F})_{F \in \Lambda}$ of conditional expectations on $\mathcal{M}$ satisfying the conditions of Lemma \ref{lem:conditional_expectations}. As $e_{F}$ is a contraction for each $F \in \Lambda$, $\sigma(E_{F}Q_{z}E_{F}) \subseteq [-1/z, 1/z]$. We then define
\begin{gather*}
    (V_{0})_{z, F} := \tilde{q}_{z}^{-1}(E_{F}Q_{z}E_{F}).
\end{gather*}
\begin{theorem}
For $z > 0, F \in \Lambda$ the operators $(V_{0})_{z, F}$ are skew-adjoint elements in $L^{0}(\mathcal{M}, \tau)$, with $(V_{0})_{z, F} \rightarrow_{\rm SRT} V_{0}$ under the iterated limits $z \rightarrow 0^{+}$ after $F \rightarrow \infty$. 
\end{theorem}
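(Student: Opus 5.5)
The plan is to mirror the compactification argument of \cite{gv_one, gv_two}, with the finite-dimensional truncations replaced by the conditional expectations $E_{F}$ of Lemma \ref{lem:conditional_expectations}. There are two things to establish: membership in $L^{0}(\mathcal{M}, \tau)$ together with skew-adjointness, and then convergence.

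\emph{Skew-adjointness and affiliation.} Since $Q_{z} = q_{z}(V_{0})$ is a bounded skew-adjoint operator and $q_{z}$ is a bounded Borel function, Lemma \ref{lemma:closure_under_borel} gives $Q_{z} \; \eta \; \mathcal{M}$, so $Q_{z} \in \mathcal{M}$. I would interpret $E_{F}Q_{z}E_{F}$ as follows. Acting on $L^{2}(\mathcal{M}, \tau)$, the compression $e_{F}Q_{z}e_{F}$ is skew-adjoint. Because $E_{F}(\mathcal{M})$ is finite-dimensional, $e_{F}$ has finite rank, so this compression is a finite-rank skew-adjoint operator and has pure-point spectrum contained in $i[-\frac{1}{2z}, \frac{1}{2z}]$. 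Applying $\tilde{q}_{z}^{-1}$ through the functional calculus, extended by $0$ at the point $0$, produces a skew-adjoint operator with discrete spectrum. Its eigenvalues are $\tilde{q}_{z}^{-1}(i\lambda)$ for the finitely many nonzero eigenvalues $i\lambda$, and $0$ on the infinite-dimensional kernel. It therefore has compact resolvent modulo the kernel.

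For affiliation, the natural choice is to work with $E_{F}(Q_{z}) \in \mathcal{M}_{F} \subseteq \mathcal{M}$. This element is skew-adjoint because $E_{F}$ is positive and hence $*$-preserving, and it lies in $\mathcal{M}$. Applying Lemma \ref{lemma:closure_under_borel} to $\tilde{q}_{z}^{-1}(E_{F}(Q_{z}))$ then gives membership in $L^{0}(\mathcal{M}, \tau)$. The step I expect to be delicate is reconciling the spatial compression $e_{F}Q_{z}e_{F}$ with the algebra element $E_{F}(Q_{z})$, since $e_{F} \notin \mathcal{M}$ in general. I would commit to the reading in which $E_{F}Q_{z}E_{F}$ means $E_{F}(Q_{z})$, viewed as an operator in $\mathcal{M}$. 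Under that reading, spectral inclusion in $i[-\frac{1}{2z},\frac{1}{2z}]$ still holds by contractivity, which is all the construction needs.

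\emph{Convergence, inner limit $F \to \infty$ at fixed $z$.} I would show that $E_{F}(Q_{z}) \to Q_{z}$ in the strong operator topology on $L^{2}(\mathcal{M}, \tau)$. For vectors $\hat b$ with $b \in \mathcal{M}$, one has $\|E_{F}(Q_{z})\hat b - Q_{z}\hat b\|$ controlled by $\|E_F(Q_z)-Q_z\|_\tau\|b\|$ when $b\in\mathcal{M}_F$, and the $\tau$-norm term tends to $0$ by Lemma \ref{lem:conditional_expectations}. Uniform boundedness $\|E_{F}(Q_{z})\| \leq \|Q_{z}\|$ then extends strong convergence to all of $L^{2}(\mathcal{M}, \tau)$.

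Strong convergence of uniformly bounded skew-adjoint operators implies strong convergence of $g(\cdot)$ for every bounded continuous $g$ (by Weierstrass approximation on the common spectral interval). Taking $g$ to be the resolvent function of $\tilde{q}_{z}^{-1}$, namely $x \mapsto (\tilde{q}_{z}^{-1}(x) - 1)^{-1}$, I would verify that this composition is continuous on $i[-\frac{1}{2z},\frac{1}{2z}]$. It tends to $0$ as $x \to 0$ because $|\tilde{q}_{z}^{-1}| \to \infty$ there, so defining it to be $0$ at $0$ matches the convention that the unbounded part of the kernel contributes nothing in the limit. This yields $(V_{0})_{z,F} \to \tilde{q}_{z}^{-1}(Q_{z})$ in SRT as $F \to \infty$. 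Here the SRT is convergence of $({\rm Im}(\cdot) - i)^{-1}$ for skew-adjoint operators. I expect the behaviour at $0$ to be the main obstacle, since $\tilde{q}_{z}^{-1}$ is discontinuous at $0$ and one needs the resolvent composition to be continuous. The trick of extending $\tilde{q}_{z}^{-1}$ by $0$ must be handled carefully, and may only give SRT convergence to an operator that agrees with $\tilde{q}_{z}^{-1}(Q_{z})$ off $\ker Q_{z}$.

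\emph{Outer limit $z \to 0^{+}$.} The composition $\tilde{q}_{z}^{-1} \circ q_{z}$ is the identity on $i\Omega_{z}$ and sends $i\omega$ with $|\omega| < z$ to $\tilde{q}_z^{-1}(q_z(i\omega))=iz^2/\omega$. Consequently $\tilde{q}_{z}^{-1}(Q_{z})$ agrees with $V_{0}$ on the spectral subspace $1_{\Omega_{z}}(V_{0})H$, and $1_{\Omega_{z}}(V_{0}) \to I - 1_{\{0\}}(V_{0})$. Resolvents can then be compared by dominated convergence in the functional calculus of $V_{0}$: the function $i\omega \mapsto (\tilde{q}_{z}^{-1}(q_{z}(i\omega)) - 1)^{-1}$ converges pointwise to $(i\omega - 1)^{-1}$ with bound $1$. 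On $\ker V_{0}$ both sides equal the resolvent at $0$, given the extension-by-$0$ convention. This gives SRT convergence to $V_{0}$ and completes the iterated limit.
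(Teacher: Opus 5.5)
Your skeleton matches the paper's: Borel functional calculus for affiliation, strong convergence of the compressed $Q_z$ under a uniform bound, then $z\to 0^{+}$. However, the step you flag as the main obstacle is a genuine gap, and the patch you suggest does not close it.

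With the convention $\tilde q_z^{-1}(0):=0$, the resolvent function of $(V_0)_{z,F}$ at $1$ is $g=\tilde g-1_{\{0\}}$. Here $\tilde g$ is your continuous function: $\tilde g(x)=(\tilde q_z^{-1}(x)-1)^{-1}$ for $x\neq 0$ and $\tilde g(0)=0$.
\begin{itemize}
\item You need $g(0)=-1$ in the outer limit, so you cannot trade $g$ for $\tilde g$ in the inner one.
\item Moreover, $\tilde g(Q_z)$ annihilates $\ker V_0$, so it is not a resolvent at all when $\ker V_0\neq\{0\}$.
\end{itemize}
What your Weierstrass argument does give is $\tilde g(E_F(Q_z))\to \tilde g(Q_z)$ strongly. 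Hence the inner limit holds if and only if $1_{\{0\}}(E_F(Q_z))\to 1_{\{0\}}(Q_z)=1_{\{0\}}(V_0)=:p_0$ strongly; note $\ker Q_z=\ker V_0$, since $q_z$ vanishes only at $0$.

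Half of this is automatic. Take a continuous $h_\epsilon$ with $1_{\{0\}}\leq h_\epsilon\leq 1_{i(-\epsilon,\epsilon)}$. Then $\limsup_F\|1_{\{0\}}(E_F(Q_z))\xi\|^2\leq \langle h_\epsilon(Q_z)\xi,\xi\rangle$, and the right side tends to $\|p_0\xi\|^2$ as $\epsilon\to 0$. Consequently your argument is complete in two cases:
\begin{itemize}
\item when $\ker V_0=\{0\}$;
\item when $p_0\in\mathcal{M}_F$ for all large $F$. Then $E_F(Q_z)p_0=E_F(Q_zp_0)=0=p_0E_F(Q_z)$, and the same bound on $p_0^{\perp}H$, where $Q_z$ is injective, forces $1_{\{0\}}(E_F(Q_z))-p_0\to 0$.
\end{itemize}

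Without such a hypothesis the mass on $\ker V_0$ can be lost, and the statement fails for some admissible nets. Here is an example.
\begin{itemize}
\item Let $\mathcal{M}=\mathbb{C}\oplus L^{\infty}[0,1]$ act in standard form on $L^{2}(\{o\}\sqcup[0,1],\tfrac{1}{2}\delta_o+\tfrac{1}{2}dx)$, and let $V_0=0\oplus M_{ix}$.
\item Let $\mathcal{M}_n$ be spanned by the indicators of the dyadic cells of generation $n$, with the atom $o$ adjoined to the cell at $0$.
\item These $\sigma$-algebras increase and generate everything up to null sets, so $e_n\to I$ strongly, as the lemma requires.
\item But $ix/(z^2+x^2)$ has positive imaginary part on $(0,1]$, so every cell average of $Q_z$ is nonzero and $E_n(Q_z)$ is injective.
\item Therefore $((V_0)_{z,n}-1)^{-1}\chi_o=\tilde g(c_n)\chi_o\to 0$, where $c_n\to 0$ is the average on the first cell.
\item Yet $(V_0-1)^{-1}\chi_o=((V_0)_z-1)^{-1}\chi_o=-\chi_o$ for every $z$.
\end{itemize}
This is exactly the Koopman situation, where the constants lie in $\ker V$. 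The paper's proof has the same hole: its appeal to the continuity of $\tilde q_z^{-1}$ ignores that $\tilde q_z^{-1}$ is undefined and unbounded at $0$.

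The rest of your proposal is sound, and in one respect more careful than the paper.
\begin{itemize}
\item Your reading $E_F(Q_z)$ is forced. Since $e_F\xi_\tau=\xi_\tau$ and $\xi_\tau$ is separating for $\mathcal{M}$, $e_F\in\mathcal{M}$ would imply $e_F=I$. So the paper's claim that $e_F$ is a projection in $\mathcal{M}$ is false, and the spatial compression is not affiliated in general.
\item Under your reading, $(V_0)_{z,F}$ lies in $\mathcal{M}_F$ and is bounded with finite spectrum. Affiliation is then immediate, and your compact-resolvent discussion, which concerns the compression you discard, plays no role.
\item In the strong-convergence step, use $\|x\hat b\|_2\leq\|x\|_2\|b\|$, which holds for every $b\in\mathcal{M}$ by traciality. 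Restricting to $b\in\mathcal{M}_F$ does not work, since $\hat b$ must stay fixed while $F$ varies.
\item Your dominated-convergence treatment of $z\to 0^{+}$ is correct. It is a self-contained replacement for the external result the paper cites there.
\end{itemize}
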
\label{thm:approximating_drift}
\begin{proof}
As $V_{0}$ is skew-adjoint and affiliated to $\mathcal{M}$, and as $q_{z}$ is a bounded, continuous (and hence Borel measurable) over $\sigma(V_{0}) \subseteq i\mathbb{R}$, by Lemma \ref{lemma:closure_under_borel} we have $Q_{z} = q_{z}(V_{0}) \in \mathcal{M}$, as $Q_{z}$ is a bounded operator. Furthermore, as $e_{F}$ is a projection in $\mathcal{M}$ for each $F \in \Lambda$, $e_{F}Q_{z}e_{F} \in \mathcal{M}_{F} \subseteq \mathcal{M}$. Each $(V_{0})_{z, F}$ is skew-adjoint, as $q_{z}$ is anti-symmetric, $e_{F}$ is self-adjoint, and $V_{0}$ is skew-adjoint. Note that as $\tilde{q}_{z}^{-1}$ is continuous it is Borel measurable, and hence another application of Lemma \ref{lemma:closure_under_borel} yields that $(V_{0})_{z, F} \; \eta \; \mathcal{M}$. 

Finally, we show that $(V_{0})_{z, F}\rightarrow_{\rm SRT} V_{0}$ under the iterated limits $z\rightarrow 0^{+}$ after $F\rightarrow \infty$. For fixed $z > 0$, we claim that $(V_{0})_{z, F}$ converges strongly to $(V_{0})_{z}$; indeed, as $(E_{F})_{F \in \Lambda}$ forms a strong approximate identity with $E_{F}$ an orthogonal projection for each $F \in \Lambda$, we have that $E_{F}Q_{z}E_{F}\rightarrow Q_{z}$ in the strong operator topology on $L^{2}(\mathcal{M}, \tau)$. Additionally, the net $(E_{F}Q_{z}E_{F})_{F \in \Lambda}$ is uniformly bounded, as $\|E_{F}\| \leq 1$ and thus $\|E_{F}Q_{z}E_{F}\| \leq \|Q_{z}\|$ for all $F \in \Lambda$. This, in conjunction with the continuity of $\tilde{q}_{z}^{-1}$, implies $(V_{0})_{z, F}\rightarrow (V_{0})_{z}$ in the strong resolvent topology as $F\rightarrow \infty$. Now, apply \cite[Theorem 5]{gv_two} to conclude that $(V_{0})_{z}\rightarrow_{\rm SRT} V_{0}$ and hence, the iterated convergence holds. 
\end{proof}


\section{Classical control systems}\label{sec:classical_systems}

In this section, we indicate how the approach in previous sections for quantum control can be applied to the control of classical dynamical systems; in particular, how the affiliation relation fits naturally with the use of operator-theoretic techniques in studying classical dynamical systems via Koopman operators. 

Let $\Phi: X\rightarrow X$ be an invertible, measure-preserving map of a standard probability space $(X, \Sigma, \mu)$. The mapping $\Phi$ induces a linear action on the space of observables $L^{p}(X, \mu)$, where the dynamics act via the Koopman operator
\begin{gather*}
    U: L^{p}(X, \mu)\rightarrow L^{p}(X, \mu), \;\;\;\; Uf := f\circ \Phi,
\end{gather*}
\noindent which is an isometric isomorphism of $L^{p}(X, \mu)$ for $1 \leq p \leq \infty$. When $1 < p \leq \infty$, the Koopman operator is the dual of the Perron-Frobenius operator
\begin{gather*}
    P: L^{q}(X, \mu)\rightarrow L^{q}(X, \mu), \;\;\;\; Pf = f\circ \Phi^{-1},
\end{gather*}
\noindent for conjugate $p, q$. Under continuous time evolution, we have a measure-preserving flow $\Phi^{t}: X\rightarrow X$ with 
\begin{gather*}
x(t) = \Phi^{t}(x_{0}), \;\;\;\; x(0) = x_{0} \in X, \; t \in \mathbb{R}.
\end{gather*}
\noindent For each time $t \in \mathbb{R}$, this induces a unitary operator $U^{t}: L^{2}(X, \mu)\rightarrow L^{2}(X, \mu)$, acting via $U^{t}f(x) = f(\Phi^{t}(x))$ for $f \in L^{2}(X, \mu)$. Furthermore, the mapping $t \mapsto U^{t}$ is strongly continuous, and thus the \textit{Koopman group} $\{U^{t}\}_{t \in \mathbb{R}}$ is a one-parameter strongly continuous unitary group on $L^{2}(X, \mu)$. By Stone's Theorem \cite[Theorem 5.6.36]{kr_one} there exists a skew-adjoint (typically unbounded) operator $V: D(V) \rightarrow L^{2}(X, \mu)$ via the norm limit $Vf = \lim_{t\rightarrow 0}(U^{t}f-f)/t$ such that $U_{t} = e^{tV}$ for each $t \in \bb{R}$ (using the functional calculus). 

With generator $V$ and corresponding Koopman group $\{U^{t}\}_{t \in \mathbb{R}}$, we can frame the control of observables under the dynamics of $V$ with a bilinear system of the form
\begin{gather*}
    \frac{d}{dt}f(t) = (V+\hat{V}(t))f(t), \;\;\;\; f(0) = f_{0},
\end{gather*}
\noindent for some observable $f_{0} \in L^{2}(X, \mu)$ and where the previous derivative is taken with respect to the strong topology on $L^{2}(X, \mu)$. In this setting, $\hat{V}(t)$ would be an admissible control (in the sense of Sections \ref{sec:adm_controls} and \ref{sec:qcs_geom_control}) acting on $L^{2}(X, \mu)$. There is also a natural finite von Neumann algebra on $L^{2}(X, \mu)$ which $V$ is affiliated to (though not canonical, in that there may be other choices of von Neumann algebra), which can guide the choice of admissible controls. Let 
\begin{gather*}
    {\rm VN}(\Phi) := \{U^{t}\}_{t \in \bb{R}}'' \subseteq \cl{B}(L^{2}(X, \mu))
\end{gather*}
\noindent denote the von Neumann algebra generated by the one-parameter unitary group $\{U^{t}\}_{t \in \bb{R}}$ on $L^{2}(X, \mu)$ corresponding to flow $\Phi$. Note that as $\bb{R}$ is abelian, ${\rm VN}(\Phi)$ is an abelian von Neumann algebra. We will call this the \textit{Koopman von Neumann algebra}.

It is known (see \cite[Lemma 5.6.7]{kr_one}) that for any skew/self-adjoint operator $V$ acting on a Hilbert space $H$, there exists an abelian von Neumann algebra $\mathcal{M}\subseteq \mathcal{B}(H)$ for which $V \in L^{0}(\mathcal{M})$. This is precisely the von Neumann algebra generated by the spectral projections $e(\lambda)$ of $V$ on $H$; let $\mathcal{M}_{V}$ denote this von Neumann algebra. We show that for $V$, when considered as the generator for a one-parameter unitary group $\{U^{t}\}_{t \in \mathbb{R}}$, the von Neumann algebra $\mathcal{M}_{V}$ coincides precisely with the von Neumann algebra generated by the unitary group. In all that follows, we work with self-adjoint operators $V$ on $H$; analogous statements and proofs follow for skew-adjoint operators simply through multiplication by an imaginary factor.

\begin{lemma}\label{l_gen_unit_c_star}
Let $V$ be a generator for a one-parameter unitary group $\{U^{t}\}_{t \in \bb{R}}$ on some Hilbert space $H$. If $V$ is bounded, then the ${\rm C}^{*}$-algebras generated by $\{V, I_{H}\}$ and $\{U^{t}\}_{t \in \bb{R}}$ (considered inside $\cl{B}(H)$) coincide. 
\end{lemma}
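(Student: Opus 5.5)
The plan is to prove the two inclusions separately. Write $\mathcal{A}_{V} := C^{*}(\{V, I_{H}\})$ and $\mathcal{A}_{U} := C^{*}(\{U^{t}\}_{t \in \mathbb{R}})$, both inside $\mathcal{B}(H)$. Since $V$ is bounded and self-adjoint, $U^{t} = e^{itV}$ is given by the norm-convergent power series $\sum_{k\geq 0}(itV)^{k}/k!$. (For skew-adjoint $V$ we would use $e^{tV}$, with the same argument.) The partial sums are polynomials in $V$ and $I_{H}$, so they lie in $\mathcal{A}_{V}$. Because $\mathcal{A}_{V}$ is norm closed, $U^{t} \in \mathcal{A}_{V}$ for every $t$, and hence $\mathcal{A}_{U} \subseteq \mathcal{A}_{V}$. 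Also $I_{H} = U^{0} \in \mathcal{A}_{U}$, which will be needed below.

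For the reverse inclusion it suffices to show $V \in \mathcal{A}_{U}$. Boundedness of $V$ gives that $t \mapsto U^{t}$ is norm-differentiable at $0$, with
\[
\Big\| \frac{U^{t}-I_{H}}{it} - V \Big\| \leq \sum_{k \geq 2}\frac{|t|^{k-1}\|V\|^{k}}{k!} \leq |t|\,\|V\|^{2}e^{|t|\|V\|} \longrightarrow 0
\]
as $t \to 0$, directly from the power series. Each difference quotient $(U^{t}-I_{H})/(it)$ lies in $\mathcal{A}_{U}$, and $\mathcal{A}_{U}$ is norm closed, so $V \in \mathcal{A}_{U}$. Together with $I_{H} \in \mathcal{A}_{U}$ this gives $\mathcal{A}_{V} \subseteq \mathcal{A}_{U}$, and the two algebras coincide.

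I do not expect a serious obstacle. The only point needing care is that the limits are taken in operator norm and not merely strongly, since a $\mathrm{C}^{*}$-algebra is only norm closed. This is exactly where the hypothesis that $V$ is bounded enters: a bounded generator forces norm continuity and norm differentiability of the group. For an unbounded generator, $V$ is not even an element of $\mathcal{B}(H)$, and the analogous statement must be phrased for von Neumann algebras and affiliation. That is presumably the next step in the paper, namely identifying $\mathcal{M}_{V}$ with ${\rm VN}(\Phi)$.
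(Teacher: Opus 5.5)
Your proof is correct. It takes a different route from the paper for the reverse inclusion $\mathcal{A}_{V} \subseteq \mathcal{A}_{U}$.

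The paper passes through the Gelfand isomorphism $\mathcal{A}_{V} \cong C(\sigma(V))$, under which $V$ corresponds to $\lambda \mapsto \lambda$ and $U^{t}$ to $\lambda \mapsto e^{it\lambda}$. It then observes that finite linear combinations of these exponentials form a unital $*$-subalgebra that contains the constants and separates points of the compact set $\sigma(V)$. Stone--Weierstrass makes this subalgebra sup-norm dense, so every element of $\mathcal{A}_{V}$ is a norm limit of elements of $\mathcal{A}_{U}$.

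You instead recover the single generator $V$ directly as the operator-norm limit of the difference quotients $(U^{t}-I_{H})/(it)$. Your error bound $|t|\,\|V\|^{2}e^{|t|\|V\|}$ from the power series is correct. You then use only that $V$ and $I_{H}$ generate $\mathcal{A}_{V}$.

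Your argument is more elementary: it needs no Gelfand theory, no Stone--Weierstrass, and no normality of $V$. It also isolates exactly where boundedness is used, namely norm (not merely strong) differentiability of $t \mapsto U^{t}$ at $0$. That is the precise point that fails for unbounded generators and forces the von Neumann algebra/affiliation formulation later.

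The paper's route gives the explicit spectral picture of both algebras as $C(\sigma(V))$, which fits the functional-calculus viewpoint used next in identifying $\mathcal{M}_{V}$ with $\{U^{t}\}_{t\in\mathbb{R}}''$. The density of the span of the $U^{t}$ in $\mathcal{A}_{V}$ is not extra information, though. It also follows from your argument, since the $*$-algebra generated by the group is already the linear span of the $U^{t}$.
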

\begin{proof}
Let $\cl{A}_{V}$ denote the ${\rm C}^{*}$-algebra generated by $\{V, I_{H}\}$ and $\cl{A}$ denote the ${\rm C}^{*}$-algebra generated by $\{U^{t}\}_{t \in \bb{R}}$. As $U^{t} = e^{tV}$ for each $t \in \bb{R}$, the functional calculus tells us $\cl{A} \subseteq \cl{A}_{V}$. 

For the converse, note that as $V$ is bounded and self-adjoint, $\cl{A}_{V}$ is abelian with $\cl{A}_{V} \cong C(\sigma(V))$ via Gelfand's Theorem (see \cite[Chapter I, Section 3]{takesaki}). Furthermore, $V \mapsto \iota$, where $\iota \in \mathcal{B}(\sigma(V))$ is the identity mapping $\iota(\omega) = \omega$ for $\omega \in \sigma(V)$ with $U^{t} \mapsto e^{t\iota}$ on $\sigma(V)$ (thus, under this correspondence $e^{t\iota}(\lambda) = e^{it\lambda}$). Let $\cl{A}_{0} \subseteq C(\sigma(V))$ be the $*$-subalgebra of finite linear combinations of $e^{t\iota}$'s; by setting $t = 0$, it is clear that $\cl{A}_{0}$ also contains the constant functions. One easily argues that $\cl{A}_{0}$ also separates points in $\sigma(V)$, and thus by Stone-Weierstrass $\overline{\cl{A}_{0}}^{\|\cdot\|_{\rm sup}} = C(\sigma(V))$. Thus, any element in $\cl{A}_{V}$ is a norm-limit of elements from $\cl{A}_{0}$; as $\cl{A}_{0} \subseteq \cl{A}$, this shows $\cl{A}_{V} \subseteq \cl{A}$. 
\end{proof}
\begin{proposition}\label{prop:equiv_of_vna}
Let $V$ be a (potentially unbounded) self-adjoint operator on a Hilbert space $H$. Then $\mathcal{M}_{V}$, the von Neumann algebra generated by the spectral projections of $V$, coincides with $\{U^{t}\}_{t \in \mathbb{R}}''\subseteq \mathcal{B}(H)$, where $V$ is the generator of the Koopman group $\{U^{t}\}_{t \in \mathbb{R}}$.
\end{proposition}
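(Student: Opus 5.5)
We prove the two inclusions $\{U^{t}\}_{t\in\mathbb{R}}''\subseteq \mathcal{M}_{V}$ and $\mathcal{M}_{V}\subseteq \{U^{t}\}_{t\in\mathbb{R}}''$ separately. Throughout, $V=\int_{\mathbb{R}}\lambda\, de(\lambda)$ is the spectral decomposition and $U^{t}=e^{itV}=\int e^{it\lambda}\,de(\lambda)$. Since $\mathcal{M}_{V}=\{e(\Delta):\Delta \text{ Borel}\}''$ and $\{U^{t}\}''$ are both von Neumann algebras, it suffices by the Bicommutant Theorem to compare commutants: we show $\{U^{t}\}_{t}' = \{e(\Delta)\}_{\Delta}'$.

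\emph{First inclusion.} Let $x\in\mathcal{B}(H)$ commute with every spectral projection $e(\Delta)$. Then $x$ commutes with every bounded Borel function of $V$, since such functions are norm limits of finite linear combinations of spectral projections (simple functions). In particular $x$ commutes with $U^{t}=e^{itV}$ for all $t$. Hence $\{e(\Delta)\}'\subseteq\{U^{t}\}'$, and taking commutants gives $\{U^{t}\}''\subseteq\mathcal{M}_{V}$. Alternatively, this is Lemma \ref{lemma:closure_under_borel} applied with $f(\lambda)=e^{it\lambda}$, which shows $U^{t}\,\eta\,\mathcal{M}_{V}$; since $U^{t}$ is bounded, $U^{t}\in\mathcal{M}_{V}$.

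\emph{Second inclusion (main step).} Let $x\in\{U^{t}\}'$. We must show that $x$ commutes with each $e(\Delta)$. The plan is to pass from the unitaries to the resolvents and then to the spectral projections. First, for $\mathrm{Im}\,z>0$ the resolvent has the Laplace representation
\[
(V-z)^{-1} = i\int_{0}^{\infty} e^{-izt}\,U^{-t}\,dt ,
\]
understood as a strongly convergent Bochner integral; the case $\mathrm{Im}\,z<0$ is handled symmetrically. Since $x$ commutes with each $U^{t}$ and the integral converges strongly, $x$ commutes with all resolvents $(V-z)^{-1}$, $z\notin\mathbb{R}$. Next, Stone's formula expresses $\tfrac12\big(e([a,b])+e((a,b))\big)$ as the strong limit as $\epsilon\to0^{+}$ of
\[
\frac{1}{2\pi i}\int_{a}^{b}\big[(V-\lambda-i\epsilon)^{-1}-(V-\lambda+i\epsilon)^{-1}\big]\,d\lambda .
\]
Hence $x$ commutes with these projection combinations, and letting endpoints vary yields that $x$ commutes with $e((a,b])$ for all $a<b$. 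A monotone class (Dynkin) argument then extends this to all Borel sets $\Delta$, because the set of Borel $\Delta$ with $xe(\Delta)=e(\Delta)x$ is closed under complements and countable disjoint unions (by strong $\sigma$-additivity of $e$). This shows $\{U^{t}\}'\subseteq\{e(\Delta)\}'$, so $\mathcal{M}_{V}\subseteq\{U^{t}\}''$.

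I expect the second inclusion to be the only real obstacle. The bounded-case Lemma \ref{l_gen_unit_c_star} does not apply directly, because $V$ is unbounded and the Stone--Weierstrass argument fails on a noncompact spectrum. The resolvent/Stone's formula route avoids this. As a backup, one could replace the resolvent step by a bounded-$V$ argument: apply the Cayley transform or truncate by $e([-n,n])$, but this requires first knowing that $x$ commutes with $e([-n,n])$, which is circular. Another route is to note that $\{U^{t}\}''$ is the weak closure of $\{\hat{g}(V):g\in L^{1}(\mathbb{R})\}$, since $\hat{g}(V)=\int g(t)U^{t}\,dt$, and such functions $\hat g$ are dense enough to recover indicator functions in the weak topology. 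For these reasons I would use the resolvent route.
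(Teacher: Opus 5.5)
Your proof is correct, apart from one sign slip noted at the end, but it takes a genuinely different route from the paper.

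\textbf{Your route.} You argue entirely on the commutant side:
\begin{itemize}
\item an operator commuting with every $U^{t}$ commutes with the resolvents (Laplace transform of the group);
\item hence it commutes with the interval projections (Stone's formula);
\item hence it commutes with every $e(\Delta)$ (a $\pi$--$\lambda$ argument);
\item the bicommutant theorem then turns $\{U^{t}\}'=\{e(\Delta)\}'$ into equality of the algebras.
\end{itemize}

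\textbf{The paper's route.} It argues on the algebra side:
\begin{itemize}
\item it first uses the bounded inverses $(V\pm iI)^{-1}$ to show that $\mathcal{M}_{V}$ is abelian and that $V\,\eta\,\mathcal{M}_{V}$;
\item it cuts down by $F_{n}=e(n)-e(-n)$, so that $VF_{n}$ is bounded;
\item it applies the Stone--Weierstrass Lemma \ref{l_gen_unit_c_star} on $F_{n}H$, placing every projection $E\le F_{n}$ of $\mathcal{M}_{V}$ in the induced algebra $\{U^{t}\}''F_{n}$;
\item it writes $E=S_{n}F_{n}$ with $S_{n}\in\{U^{t}\}''$ of norm one, via the central-carrier isomorphism;
\item it recovers $E$ as the strong limit of the $S_{m}$ ($m\ge n$).
\end{itemize}

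Your worry that truncation is circular only applies to the commutant formulation. On the algebra side, $F_{n}\in\{U^{t}\}'$ is automatic, since it is a Borel function of $V$. So $\{U^{t}\}''F_{n}$ is already a von Neumann algebra on $F_{n}H$, and nothing needs to be known in advance.

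\textbf{Trade-offs.} Your route buys brevity and self-containment. It uses only standard spectral theory, with no induced algebras, no central carriers, and no passage from $C^{*}$-algebras to their weak closures. The paper's route reuses its bounded-case lemma and yields, as by-products, the abelianness of $\mathcal{M}_{V}$ and the affiliation $V\,\eta\,\mathcal{M}_{V}$, which it uses later.

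\textbf{A shortcut.} The paper records that $\mathcal{M}_{V}$ is generated by $I$ and $(V\pm iI)^{-1}$. Your Laplace step at $z=\pm i$ therefore already finishes the second inclusion. Stone's formula and the Dynkin argument amount to a self-contained proof of that generation fact.

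\textbf{The slip.} With your convention $U^{t}=e^{itV}$ and $\mathrm{Im}\,z>0$, the correct representation is $(V-z)^{-1}=i\int_{0}^{\infty}e^{izt}U^{-t}\,dt$. As you wrote it, $|e^{-izt}|=e^{t\,\mathrm{Im}\,z}$, so the integral diverges.
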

\begin{proof}
It is easy to show that $V_{\pm} := V\pm iI$ acting on $H$ have range $H$, and kernel
\begin{gather*}
    \ker(V_{+}) = \ker(V_{-}) = \{0\},
\end{gather*}
\noindent with inverses $V_{+}^{-1}, V_{-}^{-1}$ which are everywhere defined and bounded such that $\|V_{+}^{-1}\|$ and $\|V_{-}^{-1}\| \leq 1$ (see, for instance, the arguments of \cite[Lemma 16.1.1]{mvn}). Note that if $x, y \in {\rm dom}(V)$, 
\begin{eqnarray*}
    \langle V_{+}^{-1}V_{+}x, V_{-}y\rangle 
    & = &
    \langle x, V_{-}y\rangle \\
    & = &
    \langle V_{+}x, y\rangle \\
    & = &
    \langle V_{+}x, V_{-}^{-1}V_{-}y\rangle,
\end{eqnarray*}
\noindent as $V$ is self-adjoint on its domain. As the ranges of $V_{+}, V_{-}$ are all of $H$, for any $\hat{x} \in H$ there exists $x \in H$ such that $\hat{x} = V_{+}V_{-}x$. Then $V_{+}V_{-}x = (V^{2}+I)x = V_{-}V_{+}x$, implying that $[V_{+}, V_{-}] = 0$ on ${\rm dom}(V^{2})$. As $V$ is self-adjoint, ${\rm dom}(V^{2})$ is a core for $V$ in $H$. Furthermore, we have 
\begin{gather*}
    V_{+}^{-1}V_{-}^{-1}\hat{x} = V_{+}^{-1}V_{-}^{-1}V_{+}V_{-}x = V_{+}^{-1}V_{+}x = x \\= V_{-}^{-1}V_{-}x = V_{-}^{-1}V_{+}^{-1}V_{-}V_{+}x = V_{-}^{-1}V_{+}^{-1}\hat{x},
\end{gather*}
\noindent implying $[V_{+}^{-1}, V_{-}^{-1}] = 0$ now on all of $H$ (as these operators are everywhere defined). As $V_{-}^{-1} = (V_{+}^{-1})^{*}$, this shows $(V_{+}^{-1})(V_{+}^{-1})^{*} = (V_{+}^{-1})^{*}(V_{+}^{-1})$ (i.e., $V_{+}^{-1}$ is a normal operator). 
As $V$ is a (potentially unbounded) self-adjoint operator on $H$, the Spectral Theorem (see \cite[Chapter 5]{kr_one}) tells us we have a spectral resolution $\{e(\lambda)\}_{\lambda \in \bb{R}}$ corresponding to $V$, in that $V = \int\limits_{\bb{R}}\lambda de(\lambda)$. For $n \in \bb{N}$, let $F_{n} = e(n)-e(-n)$, and set $H_{n} := F_{n}(H)$. Using the spectral projections of $V$, one may argue that $\mathcal{M}_{V} = W^{*}(\{F_{n}, VF_{n}: \; n \in \bb{N}\})$; furthermore, standard arguments show that $\mathcal{M}_{V}$ coincides with the von Neumann algebra generated by $I, V_{+}^{-1}$ and $V_{-}^{-1}$ inside $\mathcal{B}(H)$ (see, for instance, \cite[Theorem 5.6.18]{kr_one}). As we have shown that $V_{+}^{-1}$ is normal, this implies $\mathcal{M}_{V}$ is abelian.

We claim $V \in L^{0}(\mathcal{M}_{V})$. For if $U \in U(\mathcal{M}_{V}')$ is unitary, for any $f \in {\rm dom}(V)$ we have 
\begin{gather*}
    Uf = UV_{+}^{-1}(V+iI)f = V_{+}^{-1}U(V+iI)f,
\end{gather*}
\noindent implying $(V+iI)Uf = U(V+iI)f$. From this, we have $U^{*}(V+iI)U = V+iI$, and so $U^{*}VU = V$ on ${\rm dom}(V)$. As our choice of $U \in U(\mathcal{M}_{V}')$ was arbitrary, this shows $V \in L^{0}(\mathcal{M}_{V})$. 

Our final claim is that $\mathcal{M}_{V}= \{U^{t}\}_{t \in \mathbb{R}}''$; let $\mathcal{M}_{U}$ denote the latter von Neumann algebra. Previous comments tell us the von Neumann algebra generated by $VF_{n}|_{H_{n}}$ is just $\mathcal{M}_{V}F_{n}$ acting on $H_{n}$, for each $n \in \bb{N}$. By \cite[Corollary 5.6.31]{kr_one} we have $U^{t}F_{n}|_{H_{n}} = e^{-itVF_{n}}$ for $n \in \bb{N}$. As $VF_{n}$ is bounded for each $n \in \bb{N}$, Lemma \ref{l_gen_unit_c_star} tells us that the ${\rm C}^{*}$-algebras $\cl{M}_{V}F_{n}$ acting on $H_{n}$ and that generated by $\{U^{t}F_{n}: t \in \bb{R}\}$ on $H_{n}$ coincide; use $\cl{A}_{n}$ to denote the latter. 

If $E \in \cl{P}(\mathcal{M}_{V})$ is an arbitrary projection such that $E \leq F_{n}$ for some $n \in \bb{N}$, then $EF_{n} = F_{n}E = E$, with $E \in \mathcal{M}_{V}F_{n} = \cl{A}_{n}$; additionally, $E \in \cl{A}_{n'}$ for $n' \geq n$. Let $\{P_{\alpha}^{n}\}_{\alpha} \subseteq \mathcal{Z}(\mathcal{M}_{U}')$ be the collection of all (central) projections in $\mathcal{M}_{U}'$ such that $P_{\alpha}^{n}F_{n} = 0$ for all $\alpha$, and set $P_{n} := \vee_{\alpha}P_{\alpha}$. Then $P_{n}^{\perp} = (I-P_{n}) \in \mathcal{M}_{U}$. By \cite[Proposition 5.5.5]{kr_one}, there exists a $*$-isomorphism $\cl{M}_{U}F_{n}\rightarrow \cl{M}_{U}(I-P_{n})$, and thus an isometry (see \cite[Corollary I.5.4]{takesaki}). With the same choice of $E$ as before, assume $E = S_{n}F_{n}$ for some $S_{n} \in \cl{M}_{U}$. As a projection, we have 
\begin{gather*}
    1 = \|E\| = \|S_{n}F_{n}\| = \|S_{n}(I-P_{n})\|, 
\end{gather*}
\noindent (using the isometric property of the $*$-isomorphism), with $S_{n}(I-P_{n}) \in \cl{M}_{U}$ by our choice of $S_{n}$. Furthermore, $S_{n}(I-P_{n})F_{n} = S_{n}F_{n} = E$; as $\|S_{n}(I-P_{n})\| = 1$, without loss of generality we may assume that $\|S_{n}\| = 1$ itself. By construction of each $F_{n}$, we have that $\cup_{n=1}^{\infty}H_{n}$ is dense in $H$, and $S_{n} \in (\cl{M}_{U})_{1}$ (the unit ball) for each $n \in \bb{N}$; thus, $E \in \overline{(\cl{M}_{U})_{1}}^{\rm SOT}$. This means $E \in \cl{M}_{U}$, and therefore $F_{n} \in \cl{M}_{U}$ for each $n \in \bb{N}$. Similarly, the spectral resolution of $VF_{n}$ (and hence, $VF_{n}$ itself) is in $\cl{M}_{U}$ for each $n \in \bb{N}$. As $\cl{M}_{V}$ is generated by $F_{n}, VF_{n}$ for $n \in \bb{N}$, this shows $\cl{M}_{V} \subseteq \cl{M}_{U}$. 

For the inclusion $\cl{M}_{U} \subseteq \cl{M}_{V}$, as $f(V)$ is a well-defined element in $\cl{M}_{V}$ for any (bounded) Borel measurable function $f$ defined on $\sigma(V)$ by the functional calculus it is immediate that $U^{t} \in \cl{M}_{V}$ for each $t \in \bb{R}$. Thus, $\cl{M}_{U} \subseteq M_{V}$, and therefore $\cl{M}_{U} = \cl{M}_{V}$.   
\end{proof} 
\begin{remark}
\rm We clarify what the approximations constructed in Theorem \ref{thm:approximating_drift} look like, when applied to $V_{0} = V$ acting on $L^{2}(X, \mu)$ and affiliated to ${\rm VN}(\Phi)$. By the general Spectral Theorem, we know there exists a unitary $\tilde{U}: L^{2}(X, \mu)\rightarrow L^{2}(\sigma(V), \nu)$ (where $\nu$ is the spectral measure on $\sigma(V)$) such that $\hat{U}V\hat{U}^{*} = M_{\iota}$, where $\iota$ is as above (appropriately modified for skew-adjoint $V$, where $\sigma(V) \subseteq i\mathbb{R}$). In this case, ${\rm VN}(\Phi) \cong L^{\infty}(\sigma(V), \Sigma, \nu)$ spatially, under the unitary $\hat{U}$. Using integration against $\nu$ as a (semi)-finite trace $\tau_{\nu}$ on $L^{\infty}(\sigma(V), \Sigma, \nu)$, the GNS construction yields that $L^{2}(L^{\infty}(\sigma(V), \Sigma, \nu), \tau_{\nu}) = L^{2}(\sigma(V), \Sigma, \nu)$, and thus we may assume $L^{\infty}(\sigma(V), \Sigma, \nu)$ is in the standard representation. Furthermore, we note that affiliation properties are preserved under spatial isomorphism of von Neumann algebras: specifically, that
\begin{gather*}
    L^{0}({\rm VN}(\Phi)) \cong L^{0}(L^{\infty}(\sigma(V), \Sigma, \nu)),
\end{gather*}
\noindent again under conjugation by $\hat{U}$.

As $L^{\infty}(\sigma(V), \Sigma, \nu)$ is abelian, it is injective. Injectivity of $L^{\infty}(\sigma(V), \Sigma, \nu)$ implies the existence of a classical filtration $\Sigma = \cup_{n=1}^{\infty}\Sigma_{n}$ preserving the measure $\nu$, for which the corresponding von Neumann algebra $L^{\infty}(\sigma(V), \Sigma_{n}, \nu)$ with respect to $\Sigma_{n}$ is finite-dimensional. We realize each $L^{\infty}(\sigma(V), \Sigma_{n}, \nu) \hookrightarrow L^{\infty}(\sigma(V), \Sigma, \nu)$ as a subalgebra. Then for each $n \in \mathbb{N}$, the inclusion extends to a mapping $\iota: L^{1}(\sigma(V), \Sigma_{n}, \nu)\rightarrow L^{1}(\sigma(V), \Sigma, \nu)$. By duality, this yields the classical conditional expectation $\mathbb{E}(\cdot|\Sigma_{n}): L^{\infty}(\sigma(V), \Sigma, \nu)\rightarrow L^{\infty}(\sigma(V), \Sigma_{n}, \nu)$ which acts as a projection on $L^{2}(\sigma(V), \Sigma, \nu)$. These correspond precisely to the net of conditional expectations described in Section \ref{subsec:approximation_schemes}.  

For $z > 0$ and $n \in \mathbb{N}$, let $V_{z, n}$ denote the two-parameter approximation of $V$ as constructed in Section \ref{subsec:approximation_schemes}.We note that the usage of conditional expectations $\mathbb{E}(\cdot|\Sigma_{n})$ in the construction of $V_{z, n}$ most likely yields a coarser and more computationally expensive approximation for $V$ as compared to the approximations in \cite{gv_one, gv_two}, which rely on integral operators $G_{\tau}: H\rightarrow H$ induced by particular families of kernels indexed over $\tau > 0$ on $X\times X$. It would be interesting to investigate whether a suitable choice of integral operators in $\mathcal{M}$ (in the general case) would yield better approximations than what is proposed here.
\end{remark}

\subsection{Control systems on manifolds and RKHAs}\label{ss:control_rkha} We indicate in this section how our approach to control problems using affiliated operators, in conjunction with the properties of reproducing kernel Hilbert algebras, can be used to transfer results on controllability in the Hilbert space setting to results on controllability in the underlying manifold. In what follows, we assume further that $X$ is a $C^{\infty}$-manifold, that $\mathcal{H} \subseteq C^{\infty}(X)$ is an RKHA over $X$ with kernel $k:X\times X\rightarrow \mathbb{C}$, that $\mathcal{H}$ separates points of $X$, and that $\{\nabla f(x): \; f \in \mathcal{H}\}$ spans the cotangent space $T_{x}^{*}(X)$ for every point $x \in X$. As the context is clear, we use $\langle \cdot, \cdot\rangle$ to denote the inner product $\langle \cdot, \cdot\rangle_{\mathcal{H}}$ in $\mathcal{H}$. We remark that such assumptions on an RKHA are natural to consider, as standard examples of RKHA's over compact smooth manifolds satisfy such properties (see \cite{gm_rkha, grch}). 

For $x \in X$, we consider a system of the form
\begin{gather}\label{eqn:rkha}
    \frac{d}{dt}\xi(t) = (V+V(t))\xi(t), \;\;\;\; \xi(0) = k_{x}
\end{gather}
\noindent which evolves in $\mathcal{H}$. We place the following restrictions on the operators arising in (\ref{eqn:rkha}):
\begin{itemize}
    \item[(A1)] $V, V(t)$ are closed, skew-adjoint operators on $\mathcal{H}$ with $V, V(t) \in L^{0}(\mathcal{M})$ for some countably decomposable finite type $\mathcal{M}$ on $\mathcal{H}$, and $V = \overrightarrow{V}\cdot \nabla$, $V(t) = \overrightarrow{V}(t)\cdot\nabla$ where $\overrightarrow{V}, \overrightarrow{V}(t)$ are smooth vector fields on $X$ for all choice of $t$;
    \item[(A2)] The mapping $t \mapsto (V+V(t))$ is piecewise strongly continuous on $\mathcal{H}$ for a finite number of intervals $[t_{i}, t_{i+1}]$ with 
    $i = 1, \hdots, n$ and $t \in [t_{i}, t_{i+1}]$, and $t \mapsto (V+V(t))\xi$ is differentiable over each interval $[t_{i}, t_{i+1}]$ for each fixed $\xi \in {\rm dom}(V+V(t))$. 
\end{itemize}
\begin{remark}
\rm Assumption (A1) guarantees there exists a dense subspace in $\mathcal{H}$ on which $V+V(t)$ is defined for all time $t \in \mathbb{R}$; let $D \subseteq \mathcal{H}$ denote this (time-invariant) dense domain.  
\end{remark}
\noindent With assumptions (A1), (A2) results in \cite{kato_one} show that (up to an imaginary scaling) for each time interval $[t_{i}, t_{i+1}], i = 1, \hdots, n$ there exist a collection of unitaries $U(t, s): \mathcal{H}\rightarrow \mathcal{H}$ which for all $r, s, t \in [t_{i}, t_{i+1}]$ with $r \leq s \leq t$ satisfy the following properties:
\begin{itemize}
    \item[(i)] $U(s, t)^{*} = U(t, s)$;
    \item[(ii)] $U(t, t) = I_{\mathcal{H}}$;
    \item[(iii)] $U(t, s)U(s, r) = U(t, r)$;
    \item[(iv)] Strong derivatives $\frac{\partial}{\partial t}U(t, s)$ and $\frac{\partial}{\partial s}U(t, s)$ exist, with 
    \begin{gather*}
        \frac{\partial}{\partial t}U(t, s) = -U(t, s)(V+V(t)), \;\;\;\; \frac{\partial}{\partial s}U(t, s) = (V+V(s))U(t, s).
    \end{gather*}
\end{itemize}
\begin{remark}\label{remark:prop_of_adjoints}
\rm Using properties of the unitaries (i)-(iv) above, one may also show that strong derivatives $\frac{\partial}{\partial s}U(s, t)^{*}$ and $\frac{\partial}{\partial t}U(s, t)^{*}$ exist, with
\begin{gather*}
    \frac{\partial}{\partial t}U(s, t)^{*} = (V+V(t))U(s, t)^{*}, \;\;\;\; \frac{\partial}{\partial s}U(s, t)^{*} = -U(s, t)^{*}(V+V(s)).
\end{gather*}
\noindent Additionally, by (A1) above and tracing through the explicit construction of $U(t, s)$, we have that $U(t, s) \in \mathcal{M}$ for all $s, t \in \mathbb{R}$. 
\end{remark} 

\begin{proposition}\label{prop:mult_unitaries}
Under the previous assumptions on (\ref{eqn:rkha}), for each fixed interval $[t_{i}, t_{i+1}], i = 1, \hdots, n$ and all $s, t \in [t_{i}, t_{i+1}]$ with $s \leq t$ we have that $U^{*}(s, t)$ is an algebra homomorphism on $\mathcal{H}$.  
\end{proposition}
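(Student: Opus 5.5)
We show that $U^{*}(s,t)(fg) = (U^{*}(s,t)f)(U^{*}(s,t)g)$ for $f, g \in \mathcal{H}$ by a derivative argument in $t$, with $s$ fixed in $[t_{i}, t_{i+1}]$. Since $\Delta^{*}$ is bounded, multiplication is jointly continuous on $\mathcal{H}$. Also $U^{*}(s,t)$ is unitary, so it suffices to verify the identity for $f, g$ in the dense, time-invariant domain $D$; boundedness then extends it to all of $\mathcal{H}$. We also need $D$ to be stable under multiplication, or at least $fg \in D$ for $f, g$ in a suitable core. We plan to arrange this with $D \cap \mathcal{H}\subseteq C^{\infty}(X)$ together with the $\tau$-density of domains from Remark \ref{rem:completely_dense}; if that fails, we work on a smaller core of smooth functions closed under products.

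\textbf{Step 1 (Leibniz rule).} The generator $W(t) := V+V(t) = (\overrightarrow{V}+\overrightarrow{V}(t))\cdot\nabla$ is a first-order differential operator given by a smooth vector field, so $W(t)(fg) = (W(t)f)g + f(W(t)g)$ for $f,g \in D$. This is the key structural input from (A1). Skew-adjointness plays no role here; only the derivation property matters.

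\textbf{Step 2 (comparison function).} For fixed $s$ and $f,g\in D$ set
\begin{gather*}
\Theta(t) := U(s,t)^{*}(fg) - (U(s,t)^{*}f)(U(s,t)^{*}g), \qquad t\in[s,t_{i+1}].
\end{gather*}
Then $\Theta(s)=0$ by property (ii). We differentiate strongly using Remark \ref{remark:prop_of_adjoints}, which gives $\partial_{t}U(s,t)^{*} = W(t)U(s,t)^{*}$. Continuity of multiplication gives a product rule for strong derivatives in $\mathcal{H}$, so
\begin{gather*}
\Theta'(t) = W(t)U(s,t)^{*}(fg) - (W(t)U(s,t)^{*}f)(U(s,t)^{*}g) - (U(s,t)^{*}f)(W(t)U(s,t)^{*}g).
\end{gather*}
Applying the Leibniz rule of Step 1 to $f_{t}:=U(s,t)^{*}f$ and $g_{t}:=U(s,t)^{*}g$ collapses the last two terms into $W(t)(f_{t}g_{t})$. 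Hence $\Theta'(t) = W(t)\Theta(t)$. This requires $f_{t}, g_{t}$ to remain in $D$, which is exactly the invariance of the domain provided by Kato's construction \cite{kato_one} under (A2).

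\textbf{Step 3 (uniqueness).} $\Theta$ solves the evolution equation $\Theta' = W(t)\Theta$ with $\Theta(s)=0$. By the uniqueness part of Kato's theory, or directly: $\frac{d}{dt}\|\Theta(t)\|^{2} = 2\,{\rm Re}\langle W(t)\Theta,\Theta\rangle = 0$ because $W(t)$ is skew-adjoint, so $\Theta\equiv 0$. Equivalently, $\frac{d}{dt}\big(U(s,t)\Theta(t)\big)=0$ by property (iv). Thus $U^{*}(s,t)$ is multiplicative on $D$, and density extends this to $\mathcal{H}$.

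\textbf{Main obstacle.} The delicate point is domain control in Step 2. We must ensure that $f_{t}g_{t}\in{\rm dom}(W(t))$ and that the Leibniz identity holds for the closed operator $W(t)$, not merely for smooth functions. Our first attempt will use that $U(s,t)^{*}$ preserves $D$ and that $D\subseteq C^{\infty}(X)$, where $W(t)$ acts by the vector field and the product $f_{t}g_{t}$ lies in $\mathcal{H}\cap C^{\infty}(X)$. If this is insufficient, we would show that the derivation $W(t)$ restricted to smooth elements has the closed operator as its closure.
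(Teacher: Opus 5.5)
Your proposal is correct and follows the paper's argument. You differentiate the product $(U(s,t)^{*}f)(U(s,t)^{*}g)$ using boundedness of multiplication and $\partial_{t}U(s,t)^{*}=(V+V(t))U(s,t)^{*}$, use that $V+V(t)$ is a derivation on $D$ to see that this product solves the same Cauchy problem as $U(s,t)^{*}(fg)$, conclude by uniqueness, and extend by density. The differences are minor: you work with the difference $\Theta$ and prove uniqueness directly from $\frac{d}{dt}\|\Theta(t)\|^{2}=2\,\mathrm{Re}\langle W(t)\Theta(t),\Theta(t)\rangle=0$ instead of citing Kato; and the domain issues you flag ($D$ closed under products and invariant under $U(s,t)^{*}$) are exactly what the paper assumes when it says ``$D$ is an algebra and $\hat{V}(t)$ is a derivation on $D$''.
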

\begin{proof}
The proof uses ideas from the proof of \cite[Proposition 2.7]{el}. Fix $s \in \mathbb{R}$, and $f, g \in D$. As a matter of notation, set $\hat{V}(t) := V+V(t)$ for $t \in \mathbb{R}$. Define the mapping $\alpha_{s}: [s, t_{i+1}]\rightarrow \mathcal{H}$ via $\alpha_{s}(t) := (U(s, t)^{*}f)(U(s, t)^{*}g)$ for $t \geq s$ in $[t_{i}, t_{i+1}]$. Let $h \in \mathcal{H}$. We will show that the mapping $t \mapsto \langle \alpha_{s}(t), h\rangle$ is differentiable with
\begin{gather}\label{eqn:cauchy_der}
    \frac{d}{dt}\langle \alpha_{s}(t), h\rangle = \bigg\langle (\hat{V}(t)U(s, t)^{*}f)(U(s, t)^{*}g)+(U(s, t)^{*}f)(\hat{V}(t)U(s, t)^{*}g), h\bigg\rangle,
\end{gather}
\noindent for all $t \geq s$ in $(t_{i}, t_{i+1})$. Let $\epsilon > 0$ such that $s \leq t-\epsilon < t < t+\epsilon \leq t_{i+1}$, and first note that
\begin{eqnarray*}
    \frac{1}{\epsilon}(\langle \alpha_{s}(t+\epsilon), h\rangle-\langle \alpha_{s}(t), h\rangle)
    & = &
    \frac{1}{\epsilon}\bigg(\langle (U(s, t+\epsilon)^{*}f)(U(s, t+\epsilon)^{*}g), h\rangle-\langle (U(s, t)^{*}f)(U(s, t)^{*}g), h\rangle\bigg) \\
    & = &
    \frac{1}{\epsilon}\langle (U(s, t+\epsilon)^{*}f-U(s, t)^{*}f)(U(s, t+\epsilon)^{*}g), h\rangle \\
    & + &
    \frac{1}{\epsilon}\langle(U(s, t)^{*}f)(U(s, t+\epsilon)^{*}g-U(s, t)^{*}g), h\rangle.
\end{eqnarray*}
\noindent We wish to show that 
\begin{gather*}
    \lim\limits_{\epsilon\rightarrow 0}\frac{1}{\epsilon}\langle(U(s, t+\epsilon)^{*}f-U(s, t)^{*}f)(U(s, t+\epsilon)^{*}g), h\rangle = \langle (\hat{V}(t)U(s, t)^{*}f)(U(s, t)^{*}g), h\rangle.
\end{gather*}
\noindent To that end, we use the expression above to estimate bounds
\begin{eqnarray*}
    & & \bigg|\frac{1}{\epsilon}\bigg(\langle (U(s, t+\epsilon)^{*}f-U(s, t)^{*}f)(U(s, t+\epsilon)^{*}g), h\rangle-\langle(\hat{V}(t)U(s, t)^{*}f)(U(s, t)^{*}g), h\rangle\bigg)\bigg| \\
    & = &
    \bigg|\bigg\langle\bigg(\frac{1}{\epsilon}(U(s, t+\epsilon)^{*}f-U(s, t)^{*}f)-\hat{V}(t)U(s, t)^{*}f\bigg)\bigg(U(s,t+\epsilon)^{*}g\bigg), h\bigg\rangle \\
    & + &
    \bigg\langle \bigg(\hat{V}(t)U(s, t)^{*}f\bigg)\bigg(U(s, t+\epsilon)^{*}g-U(s, t)^{*}g\bigg), h\bigg\rangle\bigg|  \\
    & \leq &
    \tilde{C}\bigg\|\frac{1}{\epsilon}(U(s, t+\epsilon)^{*}f-U(s, t)^{*}f)-\hat{V}(t)U(s, t)^{*}f\|_{\mathcal{H}}\|U(s, t+\epsilon)g\|_{\mathcal{H}}\|h\|_{\mathcal{H}} \\
    &+&
    \bigg|\langle U(s, t+\epsilon)^{*}g-U(s, t)^{*}g, (\hat{V}(t)U(s, t)^{*}f)^{*}(h)\rangle\bigg|.
\end{eqnarray*}
\noindent Note that $\sup\limits_{\epsilon \neq 0}\|U(s, t+\epsilon)^{*}g\|_{\mathcal{H}} < \infty$. Furthermore, as $\frac{\partial}{\partial t}U(s, t)^{*} = \hat{V}(t)U(s, t)^{*}$ as a strong derivative in $\mathcal{H}$, we have that
\begin{gather*}
    \lim\limits_{\epsilon\rightarrow 0}\bigg\|\frac{1}{\epsilon}(U(s, t+\epsilon)^{*}f-U(s, t)^{*}f)-\hat{V}(t)U(s, t)^{*}f\bigg\|_{\mathcal{H}} = 0.
\end{gather*}
\noindent Additionally, as $s, t$ and $f$ are fixed, we have
\begin{gather*}
    \lim\limits_{\epsilon\rightarrow 0}\bigg|\langle U(s, t+\epsilon)^{*}g-U(s, t)^{*}g, (\hat{V}(t)U(s, t)^{*}f)^{*}(h)\rangle\bigg| = 0,
\end{gather*}
\noindent once we bring the limit inside the inner product. Together, these imply
\begin{gather*}
    \lim\limits_{\epsilon\rightarrow 0}\frac{1}{\epsilon}\langle(U(s, t+\epsilon)^{*}f-U(s, t)^{*}f)(U(s, t+\epsilon)^{*}g), h\rangle = \langle (\hat{V}(t)U(s, t)^{*}f)(U(s, t)^{*}g), h\rangle.
\end{gather*}   
\noindent A similar argument shows that
\begin{gather*}
    \lim\limits_{\epsilon\rightarrow 0}\frac{1}{\epsilon}\langle(U(s, t)^{*}f)(U(s, t+\epsilon)^{*}g-U(s, t)^{*}g), h\rangle = \langle (U(s, t)^{*}f)(\hat{V}(t)U(s, t)^{*}g), h\rangle.
\end{gather*}
\noindent Therefore, $t \mapsto \langle \alpha_{s}(t), h\rangle$ is (weakly) differentiable as claimed, with weak derivative
\begin{gather*}
    \alpha_{s}'(t) = (\hat{V}(t)U(s, t)^{*}f)(U(s, t)^{*}g)+(U(s, t)^{*}f)(\hat{V}(t)U(s, t)^{*}g).
\end{gather*}
\noindent However, as $D$ is an algebra and $\hat{V}(t)$ is a derivation on $D$, we have that
\begin{gather*}
    \alpha_{s}'(t) = (\hat{V}(t)U(s, t)^{*}f)(U(s, t)^{*}g)+(U(s, t)^{*}f)(\hat{V}(t)U(s, t)^{*}g) 
    \\= \hat{V}(t)\bigg((U(s, t)^{*}f)(U(s, t)^{*}g)\bigg) = \hat{V}(t)\alpha_{s}(t),
\end{gather*}
\noindent for all $t$ as above. We have that $\alpha_{s}(s) = fg$ by properties of $U(s, t)^{*}$, and so by the uniqueness of solutions for the Cauchy problem (\ref{eqn:cauchy_der}) and the results of \cite{kato_one} this implies $\alpha_{s}(t) = U(s, t)^{*}(fg)$ for all $t \in \mathbb{R}$. Therefore, for all $f, g \in D$ we have
\begin{gather}\label{eqn:multiplicative}
    U(s, t)^{*}(fg) = (U(s, t)^{*}f)(U(s, t)^{*}g).
\end{gather}
\noindent Now, using that $D$ is a dense subspace in $\mathcal{H}$, we conclude (\ref{eqn:multiplicative}) holds for all $f, g \in \mathcal{H}$ as claimed. 
\end{proof}

\begin{theorem}\label{thm:control_on_manifold}
Let $y \in X$. If there exists some solution $\xi(t)$ for (\ref{eqn:rkha}) such that $\xi(T) = k_{y}$ at some time $T > 0$, then there exists a corresponding flow
\begin{gather}\label{eqn:manifold}
    \frac{d}{dt}\gamma(t) = (\overrightarrow{V}+\overrightarrow{V}(t))\gamma(t), \;\;\;\; \gamma(0) = x,
\end{gather}
\noindent determined by $\overrightarrow{V}+\overrightarrow{V}(t)$ on $X$ for which $y$ is a solution at time $T$. 
\end{theorem}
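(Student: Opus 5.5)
The plan is to transfer the Hilbert-space solution to the manifold via the RKHA morphism structure recalled in Section \ref{ss:rkha}. A bounded linear map $T$ on $\mathcal{H}$ is a morphism sending kernel sections to kernel sections exactly when $T^{*}$ is multiplicative. So it suffices to show that the propagators of (\ref{eqn:rkha}) are induced by point maps, and then to identify those point maps with the flow of $\overrightarrow{V}+\overrightarrow{V}(t)$.

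First I work on a single interval $[t_{i}, t_{i+1}]$ where (A2) gives strong continuity. By Proposition \ref{prop:mult_unitaries}, $U(s,t)^{*}$ is an algebra homomorphism of $\mathcal{H}$, so by the morphism correspondence $U(s,t)$ maps kernel sections to kernel sections. Concretely, the functional $f \mapsto (U(s,t)^{*}f)(z)$ is a nonzero character of $\mathcal{H}$; it is nonzero because $U(s,t)^{*}$ is unitary and $\mathcal{H}$ separates points, so it does not vanish identically. I then want to identify this character with evaluation at some point $F_{s,t}(z) \in X$, i.e.\ $U(s,t)k_{z} = k_{F_{s,t}(z)}$.

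This identification of characters with points is the step I expect to be the main obstacle. My first attempt is to argue that the character $\delta_{z}\circ U(s,t)^{*}$ is continuous in $t$ and starts at $\delta_{z}$ when $t = s$. The set $\{k_{w}: w \in X\}$ is the image of the feature map $\varphi$, which is injective since $\mathcal{H}$ separates points, and an immersion since the gradients $\nabla f$ span $T_{x}^{*}X$; hence it is a smooth submanifold of $\mathcal{H}$. A continuation argument should then show that the curve $t \mapsto U(s,t)k_{z}$ stays on $\varphi(X)$. A cleaner alternative is to use the spanning hypothesis on $\{\nabla f(x)\}$ to realize the candidate curve directly, which I take up next.

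Next I define $\gamma(t) := F_{0,t}(x)$ on the first interval. I concatenate across intervals using the cocycle property (iii) and the fact that $U(t,s)$ is continuous at the breakpoints. I must then check that $\gamma$ solves (\ref{eqn:manifold}). For $f \in D$, the pairing $\langle \xi(t), f \rangle$ is, up to conjugation, $f(\gamma(t))$. Differentiating with property (iv) and Remark \ref{remark:prop_of_adjoints} gives
\[
\frac{d}{dt} f(\gamma(t)) = \big((\overrightarrow{V}+\overrightarrow{V}(t))\cdot\nabla f\big)(\gamma(t)),
\]
using the form of $V$ and $V(t)$ in (A1). Since the differentials $\nabla f(\gamma(t))$, $f \in \mathcal{H}$, span the cotangent space, and $D$ is dense so these differentials can be approximated by those of elements of $D$, this forces $\dot\gamma(t) = (\overrightarrow{V}+\overrightarrow{V}(t))(\gamma(t))$. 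The same computation is what shows $\gamma$ is differentiable in the first place, which gives the alternative route for the hard step above.

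Finally, I use that the solution satisfies $\xi(T) = k_{y}$, with $\xi(T)$ equal to the propagator applied to $k_{x}$. This gives $k_{\gamma(T)} = k_{y}$, and since $\mathcal{H}$ separates points, $\gamma(T) = y$.
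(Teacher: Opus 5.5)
Your outline is the paper's proof. Proposition \ref{prop:mult_unitaries} plus the morphism correspondence of Subsection \ref{ss:rkha} gives $\xi(t)=k_{\gamma(t)}$. You then differentiate $\langle f,\xi(t)\rangle=f(\gamma(t))$ for $f\in D$, use the spanning hypothesis to read off $\dot\gamma$, and use injectivity of the feature map to get $\gamma(T)=y$.

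The weak point is exactly the step you flag. The paper handles it by citing that correspondence. In the direction used (multiplicative $U(s,t)^{*}$ $\Rightarrow$ $U(s,t)k_{z}=k_{F(z)}$), the correspondence presupposes that the character $\delta_{z}\circ U(s,t)^{*}$ is evaluation at a point of $X$. The hypotheses of Subsection \ref{ss:control_rkha} do not make the Gelfand spectrum of $\mathcal{H}$ equal to $X$. It does hold for the subexponential weights of Section \ref{sec:examples}, but for exponentially growing weights on $\mathbb{T}^{d}$ the spectrum contains complexified points.

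Since you also invoke the correspondence, your argument is exactly as complete as the paper's. However, neither of your own fallbacks closes the step.
The continuation argument fails because $\varphi(X)$ being a submanifold does not stop a continuous curve of characters from leaving it. You would need $\varphi(X)$ to be relatively open in the character space, which is the very point at issue. The ``alternative route'' is circular as written: the computation of $\frac{d}{dt}f(\gamma(t))$ presupposes $\xi(t)=k_{\gamma(t)}$, so it cannot be what produces $\gamma$.

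The non-circular version of your second idea reverses the order. First take $\gamma$ to be the integral curve through $x$ of $-(\overrightarrow{V}+\overrightarrow{V}(t))$ (sign explained below); it exists on $[0,T]$ e.g.\ when $X$ is compact. The reproducing property and the chain rule then show that $\eta(t):=k_{\gamma(t)}$ satisfies $\frac{d}{dt}\langle f,\eta(t)\rangle=-\langle (V+V(t))f,\eta(t)\rangle$ for $f\in D$. This is the weak form of (\ref{eqn:rkha}), so uniqueness for this Cauchy problem on each interval gives $\xi=\eta$. On this route Proposition \ref{prop:mult_unitaries} is not needed at all.

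Carrying this out also exposes a sign. Since $V+V(t)$ is skew-adjoint, (\ref{eqn:rkha}) forces $\frac{d}{dt}f(\gamma(t))=-\big((\overrightarrow{V}+\overrightarrow{V}(t))\cdot\nabla f\big)(\gamma(t))$ along any curve with $\xi(t)=k_{\gamma(t)}$. This is exactly what the paper's own computation yields before the minus sign is dropped; for example, $e^{tV}k_{x}=k_{x-\alpha t}$ for $V=\alpha\,\partial_{\theta}$ on $\mathbb{T}$. Your displayed identity with a plus sign therefore does not follow from (\ref{eqn:rkha}). It appears only if property (iv) is read literally, and (iv) contradicts Remark \ref{remark:prop_of_adjoints} in view of (i). So with (\ref{eqn:rkha}) as written, the relevant curve is the integral curve of $-(\overrightarrow{V}+\overrightarrow{V}(t))$. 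The plus sign in (\ref{eqn:manifold}) is a convention slip, not something your argument should reproduce.
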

\begin{proof}
By the assumptions on $\hat{V}(t) = V+V(t)$ acting on $\mathcal{H}$, we have that 
\begin{gather*}
    \xi(t) = U(t, t_{i})U(t_{i}, t_{i-1})\cdots U(t_{2}, t_{1})k_{x}, \;\;\;\; t \geq 0, \; t \in [t_{i}, t_{i+1}].
\end{gather*}
\noindent Proposition \ref{prop:mult_unitaries} shows that $U(s, t)^{*}$ acts multiplicatively on all of $\mathcal{H}$ for $s, t \in [t_{i}, t_{i+1}]$ with $s \leq t$ and $i = 1, \hdots, n$, and thus $U(t, s)$ acts as an RKHS morphism (recall Subsection \ref{ss:rkha}); thus, there exists some function (after repeated function composition) $\gamma: [0, \infty) \rightarrow X$ such that
\begin{gather}\label{eqn:xi_expression}
    \xi(t) = U(t, t_{i})U(t_{i}, t_{i-1})\cdots U(t_{2}, t_{1})k_{x} = k_{\gamma(t)}, \;\;\;\; t \geq 0.
\end{gather}

For $f \in D$, by definition of $\hat{V}(t)$ and using that it is skew-adjoint we compute
\begin{eqnarray*}
    \frac{d}{dt}\langle f, \xi(t)\rangle 
    & = &
    \langle f, \frac{d}{dt}\xi(t)\rangle \\
    & = &
    \langle f, (V+V(t))\xi(t)\rangle \\
    & = &
    -\langle (V+V(t))f, \xi(t)\rangle 
    \\
    & = &
    -\langle (\overrightarrow{V}+\overrightarrow{V}(t))\cdot \nabla f, \xi(t)\rangle.
\end{eqnarray*}
\noindent Using the reproducing kernel property and (\ref{eqn:xi_expression}), along with an application of the chain rule, we have that 
\begin{gather*}
    \frac{d}{dt}\langle f, \xi(t)\rangle = \frac{d}{dt}\langle f, k_{\gamma(t)}\rangle = \frac{d}{dt}f(\gamma(t)) = \nabla f(\gamma(t))\cdot \frac{d}{dt}\gamma(t),
    \\ -\langle(\overrightarrow{V}+\overrightarrow{V}(t))\cdot \nabla f, \xi(t)\rangle = -\langle (\overrightarrow{V}+\overrightarrow{V}(t))\cdot \nabla f, k_{\gamma(t)}\rangle = -(\overrightarrow{V}+\overrightarrow{V}(t))\cdot \nabla f(\gamma(t)). 
\end{gather*}
\noindent Thus, 
\begin{gather*}
    \nabla f(\gamma(t))\cdot\frac{d}{dt}\gamma(t) = -(\overrightarrow{V}+\overrightarrow{V}(t))\cdot\nabla f(\gamma(t)).
\end{gather*}
\noindent As our choice of $f \in \mathcal{H}$ was arbitrary, and as $\nabla f$ spans the cotangent space at each point $x \in X$ this implies
\begin{gather*}
    \frac{d}{dt}\gamma(t) = (\overrightarrow{V}+\overrightarrow{V}(t))\gamma(t).
\end{gather*}
\noindent Note that as $\mathcal{H}$ separates points, then the feature map $z \mapsto k_{z}$ is injective. This fact, together with initial condition $\xi(0) = k_{\gamma(0)} = k_{x}$, and by the assumption that there exists some $T > 0$ such that $k_{y} = \xi(T) = k_{\gamma(T)}$, imply that $y = \gamma(T)$ is a solution of (\ref{eqn:manifold}) at time $T$ as claimed. 

\end{proof}


\section{Examples and further discussion}\label{sec:examples}
In this section, we work through a few examples which may arise in classical or quantum systems in detail and relate them to previous discussion.
\begin{example}[Control for continuous-time, measure-preserving and ergodic flows with pure-point spectrum]\label{example:linear_flow_on_torus}
\rm Much of the following initial exposition can be found in \cite[Section II]{gopss}; for greater detail on some of these points, we refer the interested reader to the aforementioned paper. Let $X$ be a compact metric space, and consider a (continuous-time) classical dynamical system $\Phi^{t}: X\rightarrow X$ which is continuous, measure-preserving, ergodic, with a pure point spectrum generated by finitely many eigenfrequencies and continuous corresponding Koopman eigenfunctions. Every such system is topologically conjugate to an ergodic rotation on a $d$-dimensional torus; thus, without loss of generality we assume $X = \mathbb{T}^{d}$. If $x = (\theta_{1}, \hdots, \theta_{d})$ for some point $x \in \mathbb{T}^{d}$ (and thus $\theta_{i} \in [0, 2\pi)$ for $i = 1, \hdots, d$), the dynamics may be described by the flow map
\begin{gather}\label{eqn:linear_flow_torus}
    \Phi^{t}(x) = (\theta_{1}+\alpha_{1}t, \hdots, \theta_{d}+\alpha_{d}t)\mod 2\pi,
\end{gather}
\noindent where $\alpha_{1}, \hdots, \alpha_{d}$ are positive, rationally independent frequency parameters. Letting $\mu$ denote the (normalized) Haar measure on $\mathbb{T}^{d}$, we let $\{U^{t}\}_{t \in \mathbb{R}}$ denote the Koopman group on $L^{2}(\mathbb{T}^{d}, \mu)$ corresponding to the linear flow, with $V: {\rm dom}(V)\rightarrow L^{2}(\mathbb{T}^{d}, \mu)$ the densely-defined, skew-adjoint generator. For explicit details on the construction of the topological mapping which allows us to represent any such dynamical system $\Phi^{t}: X\rightarrow X$ as a linear $d$-torus rotation, see \cite{efhn} or \cite[Appendix D.4]{gopss}. 

Following the constructions first taken up in \cite{dg}, set parameter $p \in (0, 1)$ with $\tau > 0$. For $j = (j_{1}, \hdots, j_{d}) \in \mathbb{Z}^{d}$, write 
\begin{gather*}
    \phi_{j}(x) = \prod\limits_{m=1}^{d}\varphi_{j_{m}}(\theta^{m}), \;\;\;\; \varphi_{\ell}(\theta) = e^{i\ell \theta},
\end{gather*}
\noindent to represent the Fourier functions on $\mathbb{T}^{d}$. Define the map $|\cdot|_{p}: \mathbb{Z}^{d}\rightarrow \mathbb{R}_{+}$ via
\begin{gather*}
    |j|_{p} := |j_{1}|^{p}+\cdots+|j_{d}|^{p},
\end{gather*}
\noindent and functions $\psi_{j} \in C(X)$ via
\begin{gather*}
    \psi_{j} := e^{-\tau |j|_{p}/2}\phi_{j}, \;\;\;\; j \in \mathbb{Z}^{d}.
\end{gather*}
\noindent Then, define kernel $\tilde{k}: X\times X\rightarrow \mathbb{R}_{+}$ via the series
\begin{gather}\label{eqn:mercer}
    \tilde{k}(x, x') = \sum\limits_{j \in \mathbb{Z}^{d}}\psi_{j}^{*}(x)\psi_{j}(x'),
\end{gather}
\noindent where the sum over $j$ converges uniformly on $X\times X$ to a smooth function. We note that $\tilde{k}$ is translation-invariant on $X = \mathbb{T}^{d}$; in particular, if $e \in X$ is the identity element, the flow (\ref{eqn:linear_flow_torus}) satisfies $\Phi^{t}(x) = x+\Phi^{t}(e)$ allowing us to deduce the dynamical invariance property
\begin{gather*}
    \tilde{k}(\Phi^{t}(x), \Phi^{t}(x')) = \tilde{k}(x, x'), \;\;\;\; x, x' \in X, \; t \in \mathbb{R}.
\end{gather*}
\noindent It was shown in \cite{dg} (see also \cite{feich, grch}) that $\tilde{k}$ is a strictly positive-definite kernel on $X$ for any $p \in (0, 1)$ and $\tau> 0$, and thus it induces an RKHS $\mathcal{H}$ on $X$, dense in $C(X)$, which separates points over $X$. Furthermore, the functions $\{\psi_{j}: \; j \in \mathbb{Z}^{d}\}$ as defined above form an orthonormal basis of $\mathcal{H}$, and thus every $f \in \mathcal{H}$ has an expansion
\begin{gather*}
    f = \sum\limits_{j \in \mathbb{Z}^{d}}\tilde{f}_{j}\psi_{j} = \sum\limits_{j \in \mathbb{Z}^{d}}\tilde{f}_{j}e^{-\tau|j|_{p}/2}\phi_{j},
\end{gather*}
\noindent where the sum over $j$ converges in the RKHS-norm. In fact, it is shown that $\mathcal{H}$ also has reproducing kernel Hilbert algebra structure, arising from the fact that the weight $\lambda \in L^{1}(\mathbb{Z}^{d})$ given by $\lambda(j) = e^{\tau |j|_{p}/2}$ for $p \in (0, 1)$ and $\tau > 0$ is subconvolutive (in that $\lambda\ast \lambda(j) \leq C\lambda(j)$ for all $j \in \mathbb{Z}^{d}$, where $C$ is some constant--- see also \cite{gm}). Results from \cite[Section IV]{gopss} show that $\{U^{t}\}_{t \in \mathbb{R}}$ and $V$ are well-defined on $\mathcal{H}$, as restrictions on the appropriate domain to $\mathcal{H} \subseteq C(X)$; we also may represent $V = \overrightarrow{V}\cdot\nabla$, where $\overrightarrow{V}$ is a smooth vector field on $\mathbb{T}^{d}$. One way to see that $\{\nabla f(x): \; f \in \mathcal{H}\}$ spans $T_{x}^{*}(X)$ for every $x \in X$ follows from the Mercer expansion (\ref{eqn:mercer}): as $\psi_{j}(x)^{*}\psi_{j}(x') = e^{-\tau|j|_{p}}\phi_{j}(x)^{*}\phi_{j}(x')$ with $e^{-\tau|j|_{p}} > 0$ for all $j$, and as $\phi_{j}$ denotes a Fourier function on compact manifold $\mathbb{T}^{d}$ we can use a standard heat-kernel argument to conclude the desired property holds. 

Let $\mathcal{M} \equiv \mathcal{M}_{V}$ denote the von Neumann algebra on $\mathcal{H}$ generated by the spectral projections of $V$; by Proposition \ref{prop:equiv_of_vna}, this is equivalent to the abelian Koopman von Neumann algebra. By construction, $\mathcal{H}$ is a separable Hilbert space, and thus $\mathcal{M}$ is finite-type (as it is abelian) and countably decomposable. Furthermore, $V \in L^{0}(\mathcal{M})$. We set up a control problem for the classical dynamical system on $\mathcal{H}$ as described in Section \ref{sec:classical_systems} by considering the equation
\begin{gather}\label{eqn:control_on_torus}
    \frac{d}{dt}\xi(t) = (V+\sum\limits_{j=1}^{d}u_{j}(t)V_{j})\xi(t), \;\;\;\; \xi(0) = \tilde{k}_{x},
\end{gather}
\noindent where $u_{j}: \mathbb{R}\rightarrow \mathbb{R}$ are all piecewise constant and bounded, $V_{j} = \frac{\partial}{\partial \theta_{j}}\cdot\nabla$ is the $j^{\rm th}$-directional derivative over $\mathbb{T}^{d}$ for $j = 1, \hdots, d$, and $\tilde{k}_{x} \in \mathcal{H}$ kernel section corresponding to some point $x \in \mathbb{T}^{d}$. One easily checks that the spectral projections for $V_{j}$ commute with $V$, and thus $V_{j} \in L^{0}(\mathcal{M})$ for $j = 1, \hdots, d$. Therefore, condition (A1) is satisfied on RKHA $\mathcal{H}$; additionally, by our use of piecewise constant and bounded functions it is easily seen that condition (A2) is also satisfied. By virtue of Theorem \ref{thm:control_on_manifold}, if $\tilde{k}_{y}$ is a solution to (\ref{eqn:control_on_torus}) for some other point $y \in \mathbb{T}^{d}$, this tells us about controllability of the original linear flow on $\mathbb{T}^{d}$. Note that, while $\mathcal{M}$ is abelian (and hence, not a factor), the RKHA structure and the orthonormal basis $\{\psi_{j}: \; j \in \mathbb{Z}^{d}\}$ of $\mathcal{H}$ can be used to show that there exists no projection $p \in \mathcal{P}(\mathcal{M})$ for which kernel section $\tilde{k}_{x} \in p\mathcal{H}$ at point $x \in \mathbb{T}^{d}$. Thus, it is reasonable to expect some manner of state controllability for the kernel sections corresponding to $\tilde{k}$ which induces $\mathcal{H}$. 
\end{example}

\begin{example}
\rm We discuss a natural class of quantum control systems for which the Hamiltonians are affiliated to a finite von Neumann algebra: these are ones which admit ``abelian symmetries" in some sense, which we make precise in a moment. A prime candidate for systems displaying this type of behavior are multi-level atoms interacting with a cavity (usually described by a harmonic oscillator via a quadratic interaction term); we follow the exposition in \cite{kzs}, but refer the interested reader to \cite{bbr, rbmb, yl}. 

Start by taking a strongly continuous unitary representation $\pi: U(1) \rightarrow U(\mathcal{B}(H))$; such a representation can be written in terms of a (potentially unbounded) self-adjoint operator $V$ acting on $H$ with pure point spectrum $\sigma(V) \subseteq \mathbb{Z}$; we write $e^{i\alpha} \mapsto e^{i\alpha V}$ for $\pi$. We further require:
\begin{itemize}
    \item[(i)] all eigenvalues of $V$ are of finite multiplicity;
    \item[(ii)] all eigenvalues of $V$ are non-negative.
\end{itemize}
\noindent The latter condition can be dropped without much effort, but the first is necessary. In this case, letting $V^{(n)}$ denote the eigen-projection of $V$ corresponding to eigenvalue $n \geq 0$, the eigenspace $H^{(n)} = V^{(n)}H$ is finite-dimensional, and we have the decomposition $H = \oplus_{n=0}^{\infty} H^{(n)}$. In \cite{kzs} they pick 
\begin{gather*}
    D_{V} := \{\psi \in H:\; V^{(n)}\psi = 0 \; {\rm for \; all \; but \; finitely \; many \;}n\},
\end{gather*}
\noindent what they call the space of ``finite particle vectors", as a dense domain of interest. This setup is used to prove the following theorem, which we include for the sake of clarity:
\begin{theorem}[Theorem 2.1, \cite{kzs}]
Consider a strongly continuous representation $\pi$ of $U(1)$ on $H$, with self-adjoint operator $V$ satisfying the previously stated conditions. Then the following statements hold:
\begin{itemize}
    \item[(i)] A self-adjoint operator $W$ commuting with $V$ admits $D_{V}$ as an invariant domain. Hence, the space $\mathfrak{u}(V) = \{iW: \; W = W^{*} {\rm \;commuting \; with 
    \;}V\}$ is a Lie algebra with the commutator as its Lie bracket;
    \item[(ii)] The exponential map is well defined on $\mathfrak{u}(V)$, and maps it onto the strongly closed subgroup $U(V) = \{U \in U(\mathcal{B}(H)): \; [U, \pi(z)] = 0, \; z \in U(1)\}$ of $U(\mathcal{B}(H))$;
    \item[(iii)] The subalgebra $\mathfrak{l} \subseteq \mathfrak{u}(V)$ generated by a family of Hamiltonians $iH_{1}, \hdots, iH_{N}$ in $\mathfrak{u}(V)$ is mapped by the exponential map into $\mathcal{G}$, and the strong closure of ${\rm exp}(\mathfrak{l})$ coincides with $\mathcal{G}$. 
\end{itemize}
\end{theorem}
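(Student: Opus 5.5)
The plan is to exploit the block-diagonal structure $H=\bigoplus_{n\ge 0}H^{(n)}$ with each $H^{(n)}$ finite-dimensional. The first observation is that a bounded operator $U$ commutes with $\pi(z)=e^{i\alpha V}$ for all $\alpha$ if and only if it commutes with every eigenprojection $V^{(n)}$. This holds because $V^{(n)}=\frac{1}{2\pi}\int_0^{2\pi}e^{-i\alpha n}e^{i\alpha V}\,d\alpha$ strongly, and conversely $e^{i\alpha V}=\sum_n e^{i\alpha n}V^{(n)}$. Hence $\{\pi(U(1))\}'=\prod_n \mathcal{B}(H^{(n)})$, a hereditarily atomic von Neumann algebra $\mathcal{M}$. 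A self-adjoint $W$ commutes with $V$ in the sense of spectral projections exactly when it is affiliated with $\mathcal{M}$. Such a $W$ reduces each $H^{(n)}$, so $W=\bigoplus_n W_n$ with $W_n$ a Hermitian matrix on $H^{(n)}$. In particular $WD_V\subseteq D_V$, which gives the invariance in (i).

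For the Lie algebra claim in (i), I will note that for $iW_1,iW_2\in\mathfrak{u}(V)$ the commutator on $D_V$ is $\bigoplus_n i[W_{1,n},W_{2,n}]$. Its closure is the direct sum of the skew-Hermitian blocks, hence skew-adjoint, commutes with $V$, and has $D_V$ as a core. So $\mathfrak{u}(V)\cong\prod_n\mathfrak{u}(H^{(n)})$, and bilinearity and the Jacobi identity follow blockwise. Alternatively, I can appeal to Theorem \ref{th:aff_op_algebra}, since $\mathcal{M}$ is finite provided a finite normal faithful trace exists, for example $\sum_n 2^{-n}\mathrm{tr}_n/\dim H^{(n)}$.

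For (ii), I will define $\exp(iW)=\bigoplus_n e^{iW_n}$, which is unitary and commutes with every $V^{(n)}$, so it lies in $U(V)$. Surjectivity reduces to the surjectivity of $\exp:\mathfrak{u}(k)\to U(k)$ on each block, applied blockwise via the spectral theorem in finite dimensions. No boundedness is needed because the direct sum of self-adjoint blocks is self-adjoint. Strong closedness of $U(V)$ follows either because commutation with a fixed $\pi(z)$ survives strong limits and the limit is unitary (as in the preliminary Proposition, $\mathcal{M}$ being finite), or directly from $U(V)=\prod_n U(H^{(n)})$ together with the compactness of each factor.

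For (iii), with $\mathcal{G}$ the strongly closed group generated by the $e^{itH_j}$, I will argue blockwise. Let $P_m=\sum_{n\le m}V^{(n)}$. Restricted to $P_mH$, the generated Lie algebra $\mathfrak{l}_m$ is finite-dimensional, and finite-dimensional Lie theory (Trotter and commutator formulas, as in Lemma \ref{l_tk_nelson}) gives that the closed group generated by $e^{itH_j}|_{P_mH}$ contains $\exp(\mathfrak{l}_m)$. Applying Lemma \ref{l_tk_nelson} directly on $H$ shows $\exp(\mathfrak{l})\subseteq\mathcal{G}$; the domain hypotheses hold because every operator involved preserves $D_V$, which is a core. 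For the converse, $\overline{\exp(\mathfrak{l})}^{s}\supseteq\mathcal{G}$ holds because the generators lie in $\exp(\mathfrak{l})$, and I will show that $\overline{\exp(\mathfrak{l})}^{s}$ is a group using strong convergence on $P_mH$ for each $m$. \textbf{The main obstacle} is this last point: the exponential image need not be closed under products in general. I would handle it through the compactness of $\prod_n U(H^{(n)})$, where the closed subgroup is a compact Lie-type group and the exponential of its (pro-)Lie algebra is dense.
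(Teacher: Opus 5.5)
Your blockwise route differs from the paper's. The paper gives no argument of its own: it reads the hypotheses as affiliation of the $H_j$ with a finite von Neumann algebra and defers to the Ando--Matsuzawa Lie correspondence together with Remark~\ref{rem:completely_dense}. Your algebra $\mathcal{M}=\{\pi(U(1))\}'=\prod_n\mathcal{B}(H^{(n)})$ is in fact the right one; affiliation with the paper's $\{V\}''\cong\ell^\infty(\mathbb{N})$ would force every $H_j$ to act as a scalar on each $H^{(n)}$. Parts (i), (ii) and the inclusion $\exp(\mathfrak{l})\subseteq\mathcal{G}$ go through as you describe. There is one slip: $[iW_1,iW_2]=-[W_1,W_2]$ has blocks $-[W_{1,n},W_{2,n}]$, not $i[W_{1,n},W_{2,n}]$.

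The genuine gap is the step you flagged, $\mathcal{G}\subseteq\overline{\exp(\mathfrak{l})}^{s}$, and the remedy you sketch does not close it. Consider the (pro-)Lie algebra $L(\mathcal{G})=\{X:\ e^{tX}\in\mathcal{G}\ \text{for all } t\}$ of the compact group $\mathcal{G}$. That $\exp(L(\mathcal{G}))$ is dense in $\mathcal{G}$ is true but does not help, because $L(\mathcal{G})$ can be strictly larger than $\mathfrak{l}$. For example, take $N=1$ and let $H_1$ be $1$ on one block $H^{(n)}$, $\sqrt{2}$ on another block $H^{(n')}$, and $0$ elsewhere. Then $\mathfrak{l}=\mathbb{R}\,iH_1$ is one-dimensional, while the restriction of $\mathcal{G}$ to $H^{(n)}\oplus H^{(n')}$ is a $2$-torus. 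Getting from $\exp(L(\mathcal{G}))$ back into $\overline{\exp(\mathfrak{l})}^{s}$ would require knowing that $\overline{\exp(\mathfrak{l})}^{s}$ is a group, which is exactly what you set out to prove.

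The missing idea is compactness of the truncated Lie algebras.
\begin{itemize}
\item[(a)] Restriction to $P_mH$ is a Lie homomorphism, so $\mathfrak{l}_m=\{X|_{P_mH}: X\in\mathfrak{l}\}$ is the Lie algebra generated by the $iH_j|_{P_mH}$.
\item[(b)] Since $\mathfrak{l}_m$ sits inside $\mathfrak{u}(P_mH)$, it is a compact Lie algebra, $\mathfrak{l}_m=\mathfrak{z}\oplus\mathfrak{s}$ with $\mathfrak{s}$ compact semisimple.
\item[(c)] Let $G_m\subseteq U(P_mH)$ be the connected, possibly non-closed, subgroup with Lie algebra $\mathfrak{l}_m$; it equals the group generated by the $e^{itH_j}|_{P_mH}$. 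It is a quotient of $\mathbb{R}^a\times\tilde S$, where $\tilde S$ is simply connected and compact by Weyl's theorem. Hence $\exp:\mathfrak{l}_m\to G_m$ is onto, and $\exp(\mathfrak{l}_m)=G_m$ is a group.
\item[(d)] For $g\in\mathcal{G}$, continuity of truncation gives $g|_{P_mH}\in\overline{G_m}=\overline{\{e^{X}|_{P_mH}: X\in\mathfrak{l}\}}$. Strong approximation of block-diagonal unitaries reduces to norm approximation on each $P_mH$, so $g\in\overline{\exp(\mathfrak{l})}^{s}$.
\end{itemize}
This is not a routine detail. For a non-compact algebra such as $\mathfrak{sl}(2,\mathbb{R})$, the exponential image is neither a group nor dense in the group it generates, so an argument using compactness of $\mathfrak{l}_m$ is indispensable.
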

We note that by placing such restrictions on $V$ (which we may think of as the drift term $V_{0}$ in our setup), and requiring the Hamiltonians to commute with $V$ (i.e., for all of their corresponding spectral projections to commute with the spectral projections of $V$), they are asking for $H_{1}, \hdots, H_{N} \; \eta \; \{V\}''$ acting on $H$. As $\{V\}'' \cong \ell^{\infty}(\mathbb{N})$ (or $\ell^{\infty}(\mathbb{Z})$, if we drop the positivity conditions on the eigenvalues of $V$), this is an abelian von Neumann algebra. The statement of Theorem 2.1 (similarly, Proposition 4.1, 4.2, 4.4, and 4.5) in \cite{kzs} almost automatically follows in our setup, with an application of \cite[Theorem 4.6, Remark 4.7]{am_one} and Remark \ref{rem:completely_dense}. 

To see how these conditions apply to the case of multi-level atoms interacting with a cavity, we consider the simplest case using only a single atom. The Hilbert space is $H = \mathbb{C}^{2}\otimes L^{2}(\mathbb{R})$ where $L^{2}(\mathbb{R})$ is taken with the Lebesgue measure (suppressing notation). We use $e_{i}\otimes \psi_{n}$ as an orthonormal basis for $H$, where $\{e_{i}\}_{i=1}^{2}$ is the canonical orthonormal basis of $\mathbb{C}^{2}$, and $\{\psi_{n}\}_{n=0}^{\infty}$ are the Hermite polynomials. Letting
\begin{gather*}
    \sigma_{1} = \begin{bmatrix} 0 & 1 \\ 1 & 0 \end{bmatrix}, \;\;\;\; \sigma_{2} = \begin{bmatrix} 0 & -i\\ i & 0\end{bmatrix}, \;\;\;\; \sigma_{3} = \begin{bmatrix} 1 & 0 \\ 0 & -1\end{bmatrix}, \;\;\;\; \sigma_{\pm} := \sigma_{1}\pm i\sigma_{2},
    \\ a: {\rm dom}(a) \rightarrow L^{2}(\mathbb{R}), \;\;\;\; a^{*}: {\rm dom}(a^{*}) \rightarrow L^{2}(\mathbb{R}),
\end{gather*}
where the former are the Pauli matrices on $\mathbb{C}^{2}$ and the latter denote the annihilation and creation operators on $L^{2}(\mathbb{R})$, the operator $V = \sigma_{3}\otimes 1+1\otimes a^{*}a$ acts as the generator for the strongly continuous representation of $U(1)$ on $\mathbb{C}^{2}\otimes L^{2}(\mathbb{R})$. the dynamics of the control system are described by the Jaynes-Cummings model (see \cite{jc}), with
\begin{gather*}
    H_{{\rm JC}} = \omega_{A}H_{{\rm JC}, 1}+\omega_{I}H_{{\rm JC}, 2}+\omega_{C}H_{{\rm JC}, 3},
\end{gather*}
\noindent where 
\begin{gather*}
    H_{{\rm JC}, 1} := (\sigma_{3}\otimes 1)/2, \; H_{{\rm JC}, 2} := (\sigma_{+}\otimes a+\sigma_{-}\otimes a^{*})/2, \; H_{{\rm JC}, 3} = 1\otimes a^{*}a.
\end{gather*}
\noindent Values $\omega_{A}, \omega_{I}, \omega_{C}$ are frequencies which are assumed to be non-vanishing but otherwise arbitrary constants. Note that our choice of basis for $\mathbb{C}^{2}\otimes L^{2}(\mathbb{R})$ from before is a common dense domain for $V, H_{{\rm JC}, i}, i = 1, 2, 3$. In this case, it is easy to see that the Hamiltonians commute with the spectral projections of $V$, and thus all are affiliated with $\ell^{\infty}(\mathbb{N})$. This additional structure is then used to prove a variety of statements on pure-state and strong controllability of the quantum system; see \cite[Section 5]{kzs} and the statements therein. 
\end{example}

\begin{example}[A non-example]
\rm Let $H = L^{2}(\bb{R})$ with the Lebesgue measure (suppressing notation), and set $\hat{x}, \hat{p}$ as the standard representation for position and momentum as densely-defined operators acting on $L^{2}(\bb{R})$. Set
\begin{gather*}
    V_{0} := -\frac{i}{2}(\hat{p}^{2}-\hat{x}^{2}), \;\;\;\; V_{+} := \frac{1}{\sqrt{2}}(\hat{p}-\hat{x}), \;\;\;\; V_{-} :=-\frac{1}{\sqrt{2}}(\hat{p}+\hat{x}),
    \\ V_{1} := V_{+}-V_{-}, \;\;\;\;V_{2} := i(V_{+}+V_{-}).
\end{gather*}
Suppose we have a system governed by the dynamics
\begin{gather*}
    \frac{d}{dt}\xi(t) = (V_{0}+u_{1}(t)V_{1}+u_{2}(t)V_{2})\xi(t), \;\;\;\; \xi(0) = \xi_{0}, 
\end{gather*}
\noindent where $\xi_{0}, \xi(t) \in L^{2}(\bb{R})$ are unit vectors. Note that $V_{1} = \frac{2}{\sqrt{2}}\hat{p}$, and $V_{2} = -\frac{2i}{\sqrt{2}}\hat{x}$. We think about this quantum mechanical system as coupling the (one-dimensional) harmonic oscillator with controlled momentum and position, where the controls are independent of one another. 

We claim that $V_{1}, V_{2}$ are not affiliated with any finite von Neumann algebra on $L^{2}(\bb{R})$. Assume towards contradiction that $V_{1}, V_{2} \; \eta \; \mathcal{M}$ for some finite $\mathcal{M}\subseteq \cl{B}(L^{2}(\bb{R}))$. By our comment above, this implies $e^{t\hat{p}}, e^{t\hat{x}} \in \mathcal{M}$ for all $t \in \bb{R}$. If $u \in U(\mathcal{M}')$, then $u$ as a unitary commutes with all $e^{t\hat{p}}, e^{t\hat{x}}$. By the latter, this would imply that $u$ must be a multiplication operator, so $u = M_{f}$ for some $f \in L^{\infty}(\mathbb{R})$. If $\lambda: \mathbb{R}\rightarrow \mathcal{B}(L^{2}(\mathbb{R}))$ denotes the left-regular representation of $\mathbb{R}$ on $L^{2}(\mathbb{R})$, one easily verifies that $\lambda(g)M_{f} = M_{\lambda(g)f}\lambda(g)$ for all $g \in \mathbb{R}$. Thus, if $M_{f}$ commutes with all $\lambda(g), g \in \mathbb{R}$, this implies $f$ must be a constant function. Hence, $\mathcal{M}'= \bb{C}I_{L^{2}(\bb{R})}$, which implies $\mathcal{M}= \cl{B}(L^{2}(\bb{R}))$; clearly, this does not possess a finite trace over all of $\mathcal{B}(L^{2}(\mathbb{R}))$, and is only semifinite. 
\end{example}
\begin{remark}
\rm Note that it is still possible to use Lie group methods in order to analyze the controllability of the system in the previous example. Densely-defined operators $V_{0}, V_{1}, V_{2}$ all share a common dense domain $D \subseteq L^{2}(\bb{R})$ of analytic vectors--- thus, $e^{tV_{j}}\xi$ has a positive radius of convergence for some $t > 0$ and each $\xi \in D$, $j = 0, 1, 2$. This means we can still manipulate the elements of the Lie group formed by the operators $e^{tV_{j}}$, for $j = 0, 1, 2$. One can compute that
\begin{gather*}
    [V_{0}, V_{+}] = V_{+}, \;\;\;\; [V_{0}, V_{-}] = -V_{-}, \;\;\;\; [V_{+}, V_{-}] = -I_{L^{2}(\bb{R})}.
\end{gather*}
\noindent Thus, $\langle V_{0}, V_{1}, V_{2}\rangle_{{\rm Lie}, \bb{R}}$ generates a four-dimensional Lie algebra. Using this Lie algebra (and its subalgebras), one can analyze the controllability of the system (similar to \cite{kurita}). 

We note here that, for a bilinear system of the form (\ref{eqn_bilinear_system}) that if the operators $V_{0}, \hdots, V_{N}$ are self-adjoint (or skew-adjoint) and affiliated to a finite von Neumann algebra $\mathcal{M}$, this is a stronger condition than only sharing a dense domain consisting of analytic vectors for all operators. Indeed: if $V_{0}, \hdots, V_{N} \; \eta \; \mathcal{M}$ for some finite $\mathcal{M}$, by \cite{mvn} and \cite[Lemma 5.1]{nelson_one} any closed, (skew)-symmetric operator affiliated with a finite von Neumann algebra automatically has a dense domain of analytic vectors; a common domain for all terms then follows from Remark \ref{rem:completely_dense}. For further analysis on approximate controllability of infinite-dimensional quantum systems over a dense domain of analytic vectors, see \cite{wtl}.
\end{remark}

\subsection*{Acknowledgments}
D.~G.\ acknowledges support from the U.S.\ Department of Defense, Basic Research Offce, under Vannevar Bush Faculty Fellowship grant N00014-21-1-2946, managed by the Office of Naval Research, and from the U.S.\ Department of Energy under grant DE-SC0025101. G.~H.\ was supported as a Postdoctoral Fellow from the first grant. Part of this work was supported by the Swedish Research Council under grant no. 2021-06594 while the authors were in residence at Insitut Mittag-Leffler in Djursholm, Sweden, during spring 2026. The authors would like to thank the institute for their hospitality.


\end{document}